\newcommand{\N}{\mathbb{N}}
\newcommand{\R}{\mathbb{R}}
\newcommand{\de}{\hspace{0.06em} d}
\newcommand{\sr}{sub-Riemannian }
\newcommand{\D}{\mathcal{D}}
\newcommand{\G}{\mathcal{G}}
\newcommand{\area}{\sigma}
\newcommand{\diverg}{\mathrm{div}_\omega}
\newcommand{\deltasr}{\Delta}
\newcommand{\deltas}{\delta}
\newcommand{\smallpar}{r_0}
\newcommand{\eps}{\varepsilon}
\newcommand{\omNice}[1]{\Omega_{#1}}
\newcommand{\omprimeNice}[1]{\Omega_0^{#1}}
\newcommand{\bdomNice}[1]{\partial\omNice{#1}}
\newcommand{\omext}[1]{\Omega^{-#1}}
\newcommand{\extfunsp}{C_c^\infty(\Omega_{-\smallpar}^{\smallpar})}
\newcommand{\op}{I^c}
\theoremstyle{plain}
\newtheorem{thm}{Theorem}[section]
\newtheorem*{thm*}{Theorem}
\newtheorem{cor}[thm]{Corollary}
\newtheorem{lem}[thm]{Lemma}
\newtheorem*{lem*}{Lemma}
\newtheorem{prop}[thm]{Proposition}
\theoremstyle{definition}
\newtheorem{defn}[thm]{Definition}
\theoremstyle{remark}
\newtheorem{rmk}[thm]{Remark}
\newcommand{\hiddensubsection}[1]{
    \stepcounter{subsection}
    \subsection*{\arabic{section}.\arabic{subsection}\hspace{1em}{#1}}
}
\title{Relative heat content asymptotics for sub-Riemannian manifolds}
\date{\today}
\author[1]{Andrei Agrachev}
\author[1,2]{Luca Rizzi}
\author[1,2,3]{Tommaso Rossi}
\affil[1]{SISSA, Trieste, Italy}
\affil[2]{Univ. Grenoble Alpes, CNRS, Institut Fourier, Grenoble, France}
\affil[3]{Institut f\"ur Angewandte Mathematik, Universit\"at Bonn, Bonn, Germany}
\begin{document}
\maketitle 

\begin{abstract}
The relative heat content associated with a subset $\Omega\subset M$ of a \sr manifold, is defined as the total amount of heat contained in $\Omega$ at time $t$, with uniform initial condition on $\Omega$, allowing the heat to flow outside the domain. In this work, we obtain a fourth-order asymptotic expansion in square root of $t$ of the relative heat content associated with relatively compact non-characteristic domains. Compared to the classical heat content that we studied in \cite{MR4223354}, several difficulties emerge due to the absence of Dirichlet conditions at the boundary of the domain. To overcome this lack of information, we combine a rough asymptotics for the temperature function at the boundary, coupled with stochastic completeness of the heat semi-group. Our technique applies to any (possibly rank-varying) sub-Riemannian manifold that is globally doubling and satisfies a global weak Poincar\'e inequality, including in particular sub-Riemannian structures on compact manifolds and Carnot groups.
\end{abstract}

%\keywords{Relative heat content, Sub-Riemannian geometry, Asymptotic expansion}
%\MSC{35R01,53C17,58J60} 

\tableofcontents

\section{Introduction} 
In this paper we study the asymptotics of the relative heat content in sub-Riemannian geometry. The latter is a vast generalization of Riemannian geometry, indeed a \sr manifold $M$ is a smooth manifold where a metric is defined only on a subset of preferred directions $\D_x\subset T_xM$ at each point $x\in M$ (called horizontal directions). For example, $\D$ can be a sub-bundle of the tangent bundle, but we will consider the most general case of rank-varying distributions. Moreover, we assume that $\D$ satisfies the so-called H\"ormander condition, which ensures that $M$ is horizontally-path connected, and that the usual length-minimization procedure yields a well-defined metric. 

Let $M$ be a \sr manifold, equipped with a smooth measure $\omega$, let $\Omega\subset M$ be an open relatively compact subset of $M$, with smooth boundary, and consider the Cauchy problem for the heat equation in this setting: 
\begin{equation}\label{eqn:cauchy_prob}
\begin{aligned}
\left(\partial_t -\Delta\right)u(t,x)  & =  0, & \qquad &\forall (t,x) \in (0,\infty) \times M, \\
u(0,\cdot) & 	=  \mathds{1}_\Omega,  & \qquad &\text{in }L^2(M,\omega),
\end{aligned}
\end{equation}
where $\mathds{1}_\Omega$ is the indicator function of the set $\Omega$, and $\Delta$ is the sub-Laplacian, defined with respect to $\omega$. By classical spectral theory, there exists a unique solution to \eqref{eqn:cauchy_prob},
\begin{equation}
u(t,x)=e^{t\Delta}\mathds{1}_\Omega(x), \qquad\forall\,x\in M,\ t>0,
\end{equation}
where $e^{t\Delta}$ denotes the heat semi-group in $L^2(M,\omega)$, associated with $\Delta$. The {\em relative heat content} is the function
\begin{equation}
H_\Omega(t)=\int_\Omega u(t,x)d \omega(x),\qquad\forall\,t>0.
\end{equation}

This quantity has been studied in connection with geometric properties of subsets of $\R^n$, starting from the seminal work of De Giorgi \cite{MR62214}, where he introduced the notion of perimeter of a set in $\R^n$ and proved a characterization of sets of finite perimeter in terms of the heat kernel. His result was subsequently refined, using techniques of functions of bounded variation: it was proven in \cite{MR1309086} for balls in $\R^n$, and in \cite{MR2325595} for general subsets of $\R^n$,  that a borel set $\Omega\subset\R^n$ with finite Lebesgue measure has finite perimeter \`a la De Giorgi if and only if 
\begin{equation}
\label{eq:de_giorgi}
\exists\lim_{t\to 0}\frac{\sqrt{\pi}}{\sqrt{t}}\big(|\Omega|-H_\Omega(t)\big)=P(\Omega),
\end{equation} 
where $|\cdot|$ is the Lebesgue measure and $P$ is the perimeter measure in $\R^n$. Notice that \eqref{eq:de_giorgi} is equivalent to a first-order\footnote{Here and throughout the paper, the notion of order is computed with respect to $\sqrt{t}$.} asymptotic expansion of $H_\Omega(t)$. A further development in this direction was then obtained in \cite{MR3019137}, where the authors extended \eqref{eq:de_giorgi} to an asymptotic expansion of order $3$ in $\sqrt{t}$, assuming the boundary of $\Omega\subset\R^n$ to be a $C^{1,1}$ set. For simplicity, we state here the result of \cite[Thm.\ 1.1]{MR3019137} assuming $\partial\Omega$ is smooth\footnote{The statement of Theorem 1.1 in \cite{MR3019137} differs from \eqref{eq:eucl_exp} by a sign in the third-order coefficient: the correct sign appears a few lines below the statement, in the expansion of the function $K_{t}(E,E^c)$.
}:
\begin{multline}
\label{eq:eucl_exp}
H_\Omega(t)=|\Omega|-\frac{1}{\sqrt\pi}P(\Omega)t^{1/2}
\\
+\frac{(n-1)^2}{12\sqrt\pi}\int_{\partial\Omega}\left(H_{\partial \Omega}^2(x)+\frac{2}{(n-1)^2}c_{\partial \Omega}(x)\right)d\mathcal{H}^{n-1}(x)t^{3/2}+o(t^{3/2}),
\end{multline} 
as $t\to0$, where $\mathcal{H}^{n-1}$ is the Hausdorff measure and, denoting by $k_i^{\partial \Omega}(x)$ the principal curvatures of $\partial \Omega$ at the point $x$, 
\begin{equation}
H_{\partial \Omega}(x)=\frac{1}{n-1}\sum_{i=1}^{n-1}k_i^{\partial \Omega}(x), \qquad c_{\partial \Omega}(x)=\sum_{i=1}^{n-1}k_i^{\partial \Omega}(x)^2,
\end{equation}

In the Riemannian setting, Van den Berg and Gilkey in \cite{MR3358065} proved the existence of a complete asympotic expansion for $H_\Omega(t)$, generalizing \eqref{eq:eucl_exp}, when $\partial\Omega$ is smooth. Moreover, they were able to compute explicitly the coefficients of the expansion up to order $4$ in $\sqrt{t}$. Their techniques are based on pseudo-differential calculus, and cannot be immediately adapted to the sub-Riemannian setting. In particular, what is missing is a global parametrix estimate for the heat kernel $p_t(x,y)$, cf.\ \cite[Sec.\ 2.3]{MR3358065}: for any $k\in\N$, there exist $J_k,C_k>0$ such that
\begin{equation}
\label{eq:parametrix}
\bigg\|p_t(x,y)-\sum_{j=0}^{J_k}p_t^j(x,y)\bigg\|_{C^k(M\times M)}\leq C_kt^k,\qquad\text{as }t\to0,
\end{equation}
where $p_t^j(x,y)$ are suitable smooth functions, given explicitly in terms of the Euclidean heat kernel and iterated convolutions. The closest estimate analogue to \eqref{eq:parametrix} in the \sr setting is the one proved recently in \cite[Thm.\ A]{YHT-2} (see Theorem \ref{t:hk_exp} for the precise statement), where the authors show an asymptotic expansion of the heat kernel in an \emph{asymptotic neighborhood} of the diagonal, which is not enough to reproduce \eqref{eq:parametrix} and thus the argument of Van den Berg and Gilkey. Moreover, in this case, $p_t^j(x,y)$ is expressed in terms of the heat kernel of the nilpotent approximation and iterated convolutions, thus posing technical difficulties for the explicit computations of the coefficients (which would be no longer ``simple'' gaussian-type integrals). 

In this paper, under the assumption of not having characteristic points, we prove the existence of the asymptotic expansion of $H_\Omega(t)$, up to order $4$ in $\sqrt{t}$, as $t\to0$. We remark that we include also the rank-varying case. In order to state our main results, let us introduce the following operator, acting on smooth functions compactly supported close to $\partial\Omega$,
\begin{equation}
N\phi=2g(\nabla\phi,\nabla\delta)+\phi\Delta\delta,
\end{equation} 
where $\delta\colon M\rightarrow\R$ denotes the \sr signed distance function from $\partial\Omega$, see Section \ref{sec:first_ord} for precise definitions. 

\begin{thm}
\label{t:intro1}
Let $M$ be a compact \sr manifold, equipped with a smooth measure $\omega$, and let $\Omega\subset M$ be an open subset whose boundary is smooth and has no characteristic points. Then, as $t\to 0$,
\begin{equation}
\label{eq:intro_exp}
H_\Omega(t) =\omega(\Omega)-\frac{1}{\sqrt\pi}\sigma(\partial\Omega)t^{1/2}
-\frac{1}{12\sqrt\pi}\int_{\partial\Omega}\left(N(\Delta\delta)-2(\Delta\deltas)^2\right) d\area \, t^{3/2}+o(t^2),
\end{equation}
where $\sigma$ denotes the \sr perimeter measure. 
\end{thm}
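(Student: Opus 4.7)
The plan is to circumvent the absence of Dirichlet conditions by exploiting stochastic completeness of the heat semigroup. Since $M$ is compact, $e^{t\Delta}$ preserves the integral of $\mathds{1}_M$, and self-adjointness in $L^2(M,\omega)$ yields the identity
\begin{equation}
\omega(\Omega) - H_\Omega(t) = \int_{M\setminus\Omega} u(t,x)\,d\omega(x),
\end{equation}
reducing the theorem to a computation of the total heat that has leaked outside $\Omega$. By off-diagonal Gaussian bounds on the heat kernel, for any $r_0>0$ smaller than the non-characteristic injectivity radius of $\partial\Omega$, the contribution from $\{x\in M\setminus\Omega:\delta(x)>r_0\}$ is $O(t^\infty)$, so the integral localizes to the tubular half-neighborhood $\{0<\delta<r_0\}$.

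On this tube, since $\partial\Omega$ has no characteristic points, $\delta$ is smooth with $|\nabla\delta|_g\equiv 1$, and normal coordinates $x=(s,y)$ with $s=\delta(x)$, $y\in\partial\Omega$ are well defined, yielding $d\omega = J(s,y)\,ds\,d\sigma(y)$. The transport identity $\partial_s J = J\cdot\Delta\delta$ lets us Taylor-expand $J$ in $s$ in terms of $\Delta\delta|_{\partial\Omega}$ and its normal derivative $g(\nabla(\Delta\delta),\nabla\delta)$; these two quantities are exactly what combines into $N(\Delta\delta)$ and $(\Delta\delta)^2$ after repackaging, as required by \eqref{eq:intro_exp}.

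The central step is to establish a pointwise asymptotic expansion of the temperature profile in the normal direction,
\begin{equation}
u(t,(s,y)) = \tfrac{1}{2}\,\mathrm{erfc}\!\left(\tfrac{s}{2\sqrt{t}}\right) + \sum_{k\geq 1} t^{k/2}\,F_k\!\left(\tfrac{s}{\sqrt{t}},y\right),
\end{equation}
uniformly for small $s,t$. In the absence of the global parametrix estimate \eqref{eq:parametrix}, the derivation must combine the asymptotic-diagonal heat-kernel expansion of Theorem \ref{t:hk_exp} with the stochastic completeness identity, which supplies the missing non-local information that glues interior and exterior asymptotics together and identifies the correctors $F_k$. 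Substituting the expansions of $u$ and $J$ into the tube integral and rescaling $s = 2\sqrt{t}\,\tau$ converts the problem into Gaussian moment integrals in $\tau$; matching these with the Taylor coefficients of $J$ and $F_k$ produces the geometric coefficients in \eqref{eq:intro_exp}, while the absence of $t^1$ and $t^2$ terms follows from the parity of the Gaussian moments combined with the structure of $F_1$.

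The main obstacle is exactly this central step: without a global parametrix one cannot directly expand $u$ to fourth order in $\sqrt{t}$ away from the diagonal, so the ``rough'' asymptotics at the boundary must be bootstrapped into the refined expansion by leveraging stochastic completeness. A secondary but essential technical point, needed to justify the $o(t^2)$ remainder, is to dominate the tails of the $F_k$'s uniformly in $s/\sqrt{t}$ by integrable Gaussian profiles against polynomial weights, so that the rescaled integrals converge with the required precision up to the boundary.
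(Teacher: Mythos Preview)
Your proposal has a genuine gap at precisely the step you flag as the ``main obstacle.'' The central step --- a pointwise asymptotic expansion
\[
u(t,(s,y)) = \tfrac{1}{2}\,\mathrm{erfc}\!\left(\tfrac{s}{2\sqrt{t}}\right) + \sum_{k\geq 1} t^{k/2}\,F_k\!\left(\tfrac{s}{\sqrt{t}},y\right)
\]
uniform in $s,t$ --- is exactly what the available tools do \emph{not} deliver. The heat-kernel expansion of Theorem~\ref{t:hk_exp} holds only on compact subsets of $(0,\infty)\times V\times V$; the remainder is not controlled as $\tau\to 0$. Section~\ref{sec:asym_u_bd} of the paper discusses this alternative approach in detail and shows that carrying it out would require the extra hypothesis \eqref{eq:pol_dec} (polynomial control of the remainder as $\tau\to 0$), which is not known in general. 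Your suggestion that stochastic completeness ``bootstraps'' the rough zero-order information into a refined pointwise expansion is not an argument: stochastic completeness is an integral identity and supplies no pointwise control on $u$ beyond what Theorem~\ref{t:limit_u} already gives, namely $u(t,x)\to 1/2$ at boundary points, with no remainder estimate.

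The paper's actual route is different in kind: it never attempts to expand $u$ pointwise beyond zeroth order. Instead it works entirely with \emph{integrated} quantities. One localizes $\omega(\Omega)-H_\Omega(t)$ to $I\phi(t,0)$, introduces the outside contribution $\op\phi(t,0)$, and uses stochastic completeness (via Proposition~\ref{prop:asym_rel_tmp}) to show that for a cutoff $\phi\equiv 1$ near $\partial\Omega$ the difference $I\phi-\op\phi$ has an explicit Taylor series in $t$. Comparing that series with the iterated Duhamel expansion of $I\phi-\op\phi$ yields a third-order asymptotic for the boundary functional $\G_u[\phi]$ (Theorem~\ref{t:asymp_G}). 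One then writes $\omega(\Omega)-H_\Omega(t)=\tfrac12(I\phi+\op\phi)(t,0)+O(t^\infty)$ and applies the iterated Duhamel formula for the \emph{sum}; in that formula $\G_u$ first appears one order later than in the expansion of $I\phi$ alone, so the third-order information on $\G_u$ suffices for a fourth-order result on $H_\Omega$. The only pointwise input is the zero-order limit $u(t,x)\to 1/2$ on $\partial\Omega$.
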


\begin{rmk}
The compactness assumption in Theorem \ref{t:intro1} is technical and can be relaxed by requiring, instead, global doubling of the measure and a global Poincar\'e inequality, see section \ref{sec:loc_noncomp} and in particular Theorem \ref{thm:doub_poincare}. Some notable examples satisfying these assumptions are:
\begin{itemize}
\item $M$ is a Lie group with polynomial volume growth, the distribution is generated by a family of left-invariant vector fields satisfying the H\"{o}rmander condition and $\omega$ is the Haar measure. This family includes also Carnot groups. 
\item $M=\R^n$, equipped with a \sr structure induced by a family of vector fields $\{Y_1,\ldots,Y_N\}$ with bounded coefficients together with their derivatives, and satisfying the H\"{o}rmander condition. 
\item $M$ is a complete Riemannian manifold, equipped with the Riemannian measure, and with non-negative Ricci curvature.
\end{itemize}
See Section \ref{sec:examples} for further details. In all these examples, Theorem \ref{t:intro1} holds.
\end{rmk}

The strategy of the proof of Theorem \ref{t:intro1} follows a similar strategy of \cite{MR4223354}, inspired by the method introduced in \cite{Savo-heat-cont-asymp}, used for the classical heat content \eqref{eq:def_chc}. However, as we are going to explain in Section \ref{sec:strategy}, new technical difficulties arise, the main one being related to the fact that now $u(t,\cdot)\rvert_{\partial\Omega}\neq0$. At order zero, we obtain the following result, see Section \ref{sec:prel} for precise definitions.
 
\begin{thm}\label{t:intro3}
Let $M$ be a \sr manifold, equipped with a smooth measure $\omega$ and let $\Omega\subset M$ be an open relatively compact subset, whose boundary is smooth and has no characteristic points. Let $x\in\partial\Omega$ and consider a chart of privileged coordinates $\psi\colon U\rightarrow V\subset\R^n$ centered at $x$, such that $\psi(U\cap\Omega)= V\cap\{z_1>0\}$. Then,
\begin{equation}
\lim_{t\to 0}u(t,x)=\int_{\{z_1>0\}}\hat p^x_1(0,z)d\hat\omega^x(z)=\frac{1}{2}, \qquad \forall\,x\in\partial\Omega,
\end{equation}
where $\hat\omega^x$ denotes the nilpotentization of $\omega$ at $x$ and $\hat p_t^x$ denotes the heat kernel associated with the nilpotent approximation of $M$ at $x$ and measure $\hat\omega^x$.
\end{thm}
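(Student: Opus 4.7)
The plan is to rescale around $x$ to reduce the small-time problem to one on the nilpotent approximation, and then identify the resulting integral as $1/2$ by a probabilistic argument applied to the first privileged coordinate.

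\textbf{Localization and rescaling.} Writing $u(t,x)=\int_\Omega p_t(x,y)\,d\omega(y)$, the tail outside the privileged chart $U$ is $O(t^N)$ for every $N$, by Gaussian-type upper bounds on $p_t$ and the positivity of the sub-Riemannian distance $d(x,\Omega\setminus U)$. For the remaining integral over $U\cap\Omega$, I would pull back via $\psi$ to $V\cap\{z_1>0\}$ and perform the anisotropic rescaling $y=\psi^{-1}(\delta_{\sqrt t}(w))$, where $\delta_\lambda$ is the weighted dilation with weights $w_1=1\le\cdots\le w_n$ and $Q=\sum_i w_i$ is the homogeneous dimension. Since $z_1$ has weight one, the half-space condition is preserved by $\delta_{\sqrt t}$, and the rescaled domain invades all of $\{w_1>0\}$ as $t\to 0$. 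Theorem \ref{t:hk_exp} then provides the pointwise convergence
\begin{equation}
t^{Q/2}\,p_t\bigl(x,\psi^{-1}(\delta_{\sqrt t}(w))\bigr)\longrightarrow \hat p_1^x(0,w),
\end{equation}
locally uniformly in $w$, while the rescaled volume density of $\omega$ converges to that of $\hat\omega^x$.

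\textbf{Limit and identification with $1/2$.} Combining this pointwise convergence with Gaussian upper bounds that remain integrable after rescaling, dominated convergence yields
\begin{equation}
\lim_{t\to 0} u(t,x) = \int_{\{w_1>0\}} \hat p_1^x(0,w)\,d\hat\omega^x(w).
\end{equation}
To show this integral equals $1/2$, I would use that $z_1$ is a weight-one privileged coordinate, so each horizontal generator $\hat X_j$ of the nilpotent approximation satisfies $\hat X_j z_1\equiv\delta_{j,1}$ by homogeneity; in particular $|\hat\nabla z_1|^2\equiv 1$ and $\hat\Delta z_1\equiv 0$ (since $\hat\omega^x$ is the homogeneous nilpotent measure, with respect to which $\hat X_1$ is divergence-free). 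By It\^o's formula, along the diffusion $Z_t$ with generator $\tfrac12\hat\Delta$ starting at the origin, the first coordinate $Z_t^{(1)}$ is a continuous local martingale with quadratic variation $t$, and hence a standard one-dimensional Brownian motion. Therefore
\begin{equation}
\int_{\{w_1>0\}}\hat p_1^x(0,w)\,d\hat\omega^x(w)=\mathbb P\bigl(Z_1^{(1)}>0\bigr)=\tfrac12.
\end{equation}

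\textbf{Main obstacle.} The delicate step is the passage to the limit: Theorem \ref{t:hk_exp} only controls $p_t$ in an \emph{asymptotic} neighborhood of the diagonal, whereas the rescaled variable $w$ ranges over all of $\R^n$. One must therefore combine the fine inner expansion with global Gaussian-type upper bounds so that, after the anisotropic rescaling, a $w$-integrable majorant is available on $\R^n$ against $d\hat\omega^x$. This is where the standing global hypotheses on $M$ (doubling and Poincar\'e, yielding Gaussian bounds) enter in an essential way.
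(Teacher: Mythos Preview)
Your proposal is correct and reaches the same conclusion, but the route differs from the paper's in two places.

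For the passage to the limit, you rescale with $\eps=\sqrt t$, $\tau=1$ and invoke dominated convergence directly, using the Gaussian upper bound together with the Ball--Box comparison $d_{\mathrm{SR}}(0,\delta_{\sqrt t}(w))\geq c\sqrt t\,\hat d(0,w)$ (valid since $\delta_{\sqrt t}(w)\in V$) to manufacture the integrable majorant $Ce^{-c'\hat d^2(0,w)}$. This works. The paper instead introduces an auxiliary parameter $L>1$, sets $\eps=\sqrt{tL}$, $\tau=1/L$, splits the integral into a near part on $V_{\sqrt{tL}}$ (controlled by the first-order expansion of Theorem~\ref{t:hk_exp}) and a far part on $V_1\setminus V_{\sqrt{tL}}$ (bounded by $\tilde Ce^{-c\beta L/2}$), and then sends $L\to\infty$ after taking $\limsup/\liminf$ in $t$. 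Your argument is more economical; the paper's keeps the error terms explicit, which is why the same two-parameter template is revisited in Section~\ref{sec:asym_u_bd} when one attempts higher order.

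For the identification as $1/2$, the paper uses the parity $\hat p_t(0,z)=\hat p_t(0,\delta_{-1}(z))$ from \eqref{eq:hom_prop} with $\eps=-1$, which exchanges $\{z_1>0\}$ and $\{z_1<0\}$; stochastic completeness of the nilpotent approximation then gives $1/2$ in one line. Your It\^o/L\'evy argument is a genuine alternative, but one assertion needs repair: it is \emph{not} true that $\hat X_j z_1\equiv\delta_{j,1}$. Homogeneity only forces each $\hat X_j z_1$ to be a constant, namely $X_j(\delta)(x)$ in the chart \eqref{eq:bd_priv}; the non-characteristic hypothesis and the eikonal equation then give $|\hat\nabla z_1|^2=\sum_j(X_j\delta(x))^2=1$, and $\hat\Delta z_1=0$ follows since $\hat X_j$ kills constants and $\mathrm{div}_{\hat\omega^x}\hat X_j=0$. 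With this correction your Brownian-motion argument goes through. The paper's symmetry proof is shorter and avoids stochastic calculus; yours has the virtue of exhibiting the one-dimensional marginal explicitly.
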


This result can be seen as a partial generalization of \cite[Prop.\ 3]{MR3022730}, where the authors proved an asymptotic expansion of $u(t,x)$ up to order $1$ in $\sqrt{t}$ for $x\in\partial\Omega$, for a special class of non-characteristic domains in Carnot groups. 

\begin{rmk}
Our proof of Theorem \ref{t:intro3} does not yield an asymptotic series for $u(t,\cdot)\rvert_{\partial\Omega}$ at order higher than $0$. Indeed a complete asymptotic series of this quantity seems difficult to achieve, cf.\ Section \ref{sec:asym_u_bd}. 
\end{rmk}

\begin{rmk}
When $\partial\Omega$ has no characteristic points, the conormal bundle $\mathcal{A}(\partial\Omega):=\{\lambda\in T^*M\mid \langle\lambda,T_{\pi(\lambda)}\partial\Omega\rangle=0\}$ does not intersect the characteristic set and, as a consequence, the principal symbol of the sub-Laplacian is elliptic near $\mathcal{A}(\partial\Omega)$. Thus, it is likely that microlocal analysis techniques in the spirit of \cite{MR3743700} could yield the existence of a complete asymptotic expansion of the relative heat content (but not an explicit expression and geometric interpretation of the coefficients). We thank Yves Colin de Verdi\`ere and the anonymous referee for pointing out this fact.
\end{rmk}

\subsection{Strategy of the proof of Theorem \ref{t:intro1}}
\label{sec:strategy}

To better understand the new technical difficulties in the study of the relative heat content $H_\Omega(t)$, let us compare it with the classical heat content $Q_\Omega(t)$ and illustrate the strategy of the proof of Theorem \ref{t:intro1}.
 
\paragraph{The classical heat content.}
We highlight the differences between the relative heat content $H_\Omega(t)$ and the classical one $Q_\Omega(t)$: let $\Omega\subset M$ an open set in $M$, then for all $t> 0$, we have
\begin{equation}
\label{eq:def_chc}
H_\Omega(t)=\int_\Omega u(t,x)d\omega(x),\qquad Q_\Omega(t)=\int_\Omega u_0(t,x)d\omega(x),
\end{equation} 
where $u(t,x)$ is the solution to \eqref{eqn:cauchy_prob} and $u_0(t,x)$ is the solution to the Dirichlet problem for the heat equation, associated with $\Omega$, i.e.
\begin{equation}\label{eq:dir_prob}
\begin{aligned}
(\partial_t -\Delta)u_0(t,x)  & =  0, & \qquad &\forall (t,x) \in (0,\infty) \times \Omega, \\
u_0(t,x)& = 0,  &  \qquad & \forall (t,x) \in (0,\infty) \times \partial \Omega,\\
u_0(0,x) & 	=  1,  & \qquad & \forall x \in \Omega,
\end{aligned}
\end{equation}
%\begin{empheq}[right={
%\begin{cases}
%\left(\partial_t -\Delta\right)u(t,x)   =  0,  \qquad \forall (t,x) \in (0,\infty) \times M, \\
%u(0,\cdot) 	=  \mathds{1}_\Omega,   \qquad \text{in }L^2(M,\omega),
%\end{cases}
%}]{equation}
%\label{eq:dir_prob}
%\begin{cases}
%\left(\partial_t-\Delta\right)u_0(t,x) = 0  \quad x\in \Omega,  \ t>0\\ 
%u_0(t,x) = 0  \quad x\in \partial\Omega, \  t>0 \\ 
%u_0(0,x) =1 \quad x\in \Omega, 
%\end{cases} 
%\end{empheq}
The crucial difference is that $u_0(t,\cdot)\rvert_{\partial\Omega}=0$, for any $t>0$, whereas $u(t,\cdot)\rvert_{\partial\Omega}\neq 0$ in general. Thus, there is no a priori relation between $H_\Omega(t)$ and $Q_\Omega(t)$: the only relevant information is given by domain monotonicity, which implies that:
\begin{equation}
Q_\Omega(t)\leq H_\Omega(t), \qquad\forall\,t>0,
\end{equation}
and clearly this does not give the asymptotics of the latter. See also \cite{MR3116054} for other comparison results in the Euclidean setting. 

\paragraph{Failure of Duhamel's principle.}
In \cite{MR4223354}, we established a complete asymptotic expansion of $Q_\Omega(t)$, as $t\to0$, provided that $\partial\Omega$ has no characteristic points. The proof of this result relied on an iterated application of the Duhamel's principle and the fact that $u_0(t,x)\rvert_{\partial\Omega}=0$. Following the same strategy, we apply Duhamel's principle to a localized version of $H_\Omega(t)$: fix a function $\phi\in C_c^\infty(M)$, compactly supported in a tubular neighborhood around $\partial\Omega$ and such that $0\leq\phi\leq 1$ and $\phi$ is identically $1$, close to $\partial\Omega$. Then, using off-diagonal estimates for the heat kernel, one can prove that:
\begin{equation}
\label{eq:loc_H}
\omega(\Omega)-H_\Omega(t)=I\phi(t,0)+O(t^\infty),\qquad \text{as }t\to 0,
\end{equation}
where $I\phi(t,r)$ is defined for $t>0$ and $r\geq 0$ as
\begin{equation}
\label{eq:Iphi_def}
I\phi(t,r)=\int_{\omNice{r}}(1-u(t,x))\phi(x)d\omega(x),
\end{equation}
here $\omNice{r}=\{x\in \Omega\mid\delta(x)>r\}$, with $\delta\colon\Omega\rightarrow\R$ denoting the distance function from the boundary. Hence, the small-time behavior of $H_\Omega(t)$ is captured by $I\phi(t,0)$. By Duhamel's principle and the \sr mean value lemma, cf.\ Section \ref{sec:first_ord} for details, we obtain the following:
\begin{equation}
\label{eq:1st_exp_I}
I\phi(t,0)=\frac{1}{\sqrt\pi}\int_0^t \int_{\partial\Omega}\left(1-u(\tau,y)\right)\phi(y)d\area(y)(t-\tau)^{-1/2}d\tau+O(t), \qquad\text{as }t\to 0.
\end{equation}
For the classical heat content, $u_0$ satisfies Dirichlet boundary condition, cf.\ \eqref{eq:dir_prob}, hence \eqref{eq:1st_exp_I} would give the first-order asymptotics (and then one could iterate). On the contrary, in this case, we do not have prior knowledge of $u(t,y)$ as $y\in\partial\Omega$ and $t\to0$. Thus, already for the first-order asymptotics, Duhamel's principle alone is not enough, and we need some information on the asymptotic behavior of $u(t,\cdot)\rvert_{\partial\Omega}$.

\paragraph{First-order asymptotics.} We study the asymptotics of $u(t,\cdot)\rvert_{\partial\Omega}$. Using the notion of nilpotent approximation of a \sr manifold, cf. Section \ref{sec:nilpotent}, we deduce the zero-order asymptotic expansion of $u(t,\cdot)\rvert_{\partial\Omega}$ as $t\to 0$, proving Theorem \ref{t:intro3}. This is enough to infer the first-order expansion of $H_\Omega(t)$, by means of \eqref{eq:1st_exp_I}. At this point, we iterate the Duhamel's principle to obtain the higher-order terms of the expansion of $H_\Omega(t)$. However, already at the first iteration, we obtain the following formula for $I\phi$:
\begin{multline}
\label{eq:2nd_exp_I}
I\phi(t,0) =\frac{1}{\sqrt{\pi}}\int_0^t\int_{\partial\Omega}(1-u(\tau,\cdot))\phi \de\area (t-\tau)^{-1/2}d\tau\\
+\frac{1}{2\pi}\int_0^t\int_0^\tau\int_{\partial\Omega}(1-u(\hat\tau,\cdot))N\phi \de\area ((\tau-\hat\tau)(t-\tau))^{-1/2}d\hat\tau \de\tau+O(t^{3/2}),
\end{multline}
as $t\to 0$. Therefore, the zero-order asymptotic expansion of $u(t,\cdot)\rvert_{\partial\Omega}$ no longer suffices for obtaining the second-order asymptotics of $H_\Omega(t)$.

\paragraph{The outside contribution \texorpdfstring{$\op\phi$}{Icphi}.} We mentioned that the crucial difference between $H_\Omega(t)$ and $Q_\Omega(t)$, defined in \eqref{eq:def_chc}, is related to the fact that $u(t,\cdot)\rvert_{\partial\Omega}\neq 0$, whereas $u_0(t,\cdot)\rvert_{\partial\Omega}=0$, for any $t>0$. From a physical viewpoint, this distinction comes from the fact that, since the boundary $\partial\Omega$ is no longer insulated, the heat governed by the Cauchy problem $u(t,x)$, solution to \eqref{eqn:cauchy_prob}, can flow also outside of $\Omega$, whereas $u_0(t,x)$, solution to the Dirichlet problem \eqref{eq:dir_prob}, is confined in $\Omega$, and the external temperature is $0$. Hence, we can imagine that the asymptotic expansion of $H_\Omega(t)$ is affected by the boundary, both from the inside and from the outside of $\Omega$.

Interpreting $I\phi$ as the \emph{inside contribution} to the asymptotics of $H_\Omega$, we are going to formalize the physical intuition of having heat flowing outside of $\Omega$, defining an \emph{outside contribution}, $\op\phi$ to the asymptotics\footnote{The notation ``superscript $c$'' stands for complement. Indeed the outside contribution is the inside contribution of the complement of $\Omega$, see Section \ref{sec:outside}.}. The starting observation is the following simple relation: setting 
\begin{equation}
K_\Omega(t)=\int_{M\setminus\Omega}u(t,x)d\omega(x),\qquad\forall\,t> 0,
\end{equation}
we have, by divergence theorem,
\begin{equation}
\label{eq:key_rel}
H_\Omega(t)+K_\Omega(t)=\omega(\Omega),\qquad\forall\,t> 0.
\end{equation}
Similarly to \eqref{eq:Iphi_def}, for a suitable smooth function $\phi$, one may define a localized version of $K_\Omega(t)$, which we call $\op\phi(t,r)$, so that
\begin{equation}
\label{eq:loc_K}
K_\Omega(t)=\op\phi(t,0)+O(t^\infty),\qquad\text{as }t\to0,
\end{equation}
see Section \ref{sec:outside} for precise definitions. Using \eqref{eq:loc_H}, \eqref{eq:key_rel} and \eqref{eq:loc_K}, we show the following relation:
\begin{equation}
\label{eq:key_rel2}
I\phi(t,0)-\op\phi(t,0)=O(t^\infty),\qquad\text{as }t\to0,
\end{equation}
for a suitable smooth function $\phi$. On the other hand, for the localized quantity $I\phi(t,0)-\op\phi(t,0)$ we have a Duhamel's principle, 
%which at order $1$ reads as:
%\begin{equation}
%\left(I\phi-\op\phi\right)(t,0) &= \frac{1}{\sqrt{\pi}}\int_0^t\int_{\partial\Omega}(1-2u(\tau,\cdot))\phi \de\area (t-\tau)^{-1/2}d\tau+O(t),
%\end{equation}
thanks to which we are able to study the asymptotic expansion, up to order $3$, of the \emph{integral} of $u(t,x)$ over $\partial\Omega$, cf.\ Theorem \ref{t:asymp_G}. The limitation to the order $3$ of the asymptotics is technical and seems difficult to overcome, cf. Remark \ref{rmk:limitation}. Inserting this asymptotics in \eqref{eq:2nd_exp_I}, we obtain the asymptotics \emph{up to order $3$} of the expansion of $H_\Omega(t)$, as $t\to 0$. 
%\begin{equation}
%\label{eq:key_rel2}
%2(\omega(\Omega)-H_\Omega(t))=I\phi(t,0)+\op\phi(t,0)+O(t^\infty),\qquad\text{as }t\to0.
%\end{equation}
%\todo{Adesso qua cosa scrivo? Spostando il thm sulla u prima non ha pi\`u troppo senso questo commento}
%For the localized quantity $I\phi(t,0)+\op\phi(t,0)$ we have a Duhamel's principle, which is explicit at order $1$, allowing us to obtain the relative heat content asymptotics at order $1$ in $\sqrt{t}$. The subtle reason behind the cancellation of $u(t,x)$, $x\in\partial\Omega$, in the application of the Duhamel's principle is a sort of oddness symmetry of the diffusion.

\paragraph{Fourth-order asymptotics.} Since we have at disposal only the asymptotics of the integral of $u(t,x)$ over $\partial\Omega$,  up to order $3$, we need a finer argument to obtain the fourth-order asymptotics of $H_\Omega(t)$. The simple but compelling relation is based once again on \eqref{eq:loc_H}, \eqref{eq:key_rel} and \eqref{eq:loc_K}, thanks to which we can write:
\begin{equation} 
\omega(\Omega)-H_\Omega(t)=\frac{1}{2}\left(I\phi(t,0)+\op\phi(t,0)\right)+O(t^\infty),\qquad\text{as }t\to0.
\end{equation}
Now for the sum of the contributions $I\phi(t,0)+\op\phi(t,0)$, the Duhamel's principle implies the following:
\begin{multline}
\label{eq:expr_sum_intro}
I\phi(t,0)+\op\phi(t,0)=\frac{2}{\sqrt{\pi}}\sigma(\partial\Omega) t^{1/2}\\+\frac{1}{2\pi}\int_0^t\int_0^\tau\int_{\partial\Omega}(1-2u(\hat\tau,x))N\phi(y)d\sigma(y)\left((\tau-\hat\tau)(t-\tau)\right)^{-1/2}d\hat\tau \de\tau+o(t).
\end{multline}
This time notice how the integral of $u(t,x)$ over $\partial\Omega$ appears in a first-order term (as opposed to what happened in \eqref{eq:1st_exp_I} or \eqref{eq:2nd_exp_I}), thus its asymptotic expansion up to order $3$ implies a fourth-order expansion for $H_\Omega(t)$, concluding the proof of Theorem \ref{t:intro1}.

\subsection{From the heat kernel asymptotics to the relative heat content asymptotics}
In \cite[Thm.\ A]{YHT-2}, the authors proved the existence of small-time asymptotics of the hypoelliptic heat kernel, $p_t(x,y)$, see Theorem \ref{t:hk_exp} below for the precise statement. In Theorem \ref{t:intro3} we are able to exploit this result to obtain the zero-order asymptotics of the function 
\begin{equation}
u(t,x)=e^{t\Delta}\mathds{1}_\Omega(x)=\int_\Omega p_t(x,y)d\omega(y), \qquad\forall\,t>0,\quad x\in \partial\Omega.
\end{equation}
However, we are not able to extend Theorem \ref{t:intro3} to higher-order asymptotics since, roughly speaking, the remainder terms in Theorem \ref{t:hk_exp} are not uniform as $t\to 0$. If we had a better control on the remainders, we could indeed integrate (in a suitable way) the small-time heat kernel asymptotics to obtain the corresponding expansion for $u(t,x)$. Finally, from such an expansion, the relative heat content asymptotics would follow from the localization principle \eqref{eq:loc_H} and the (iterated) Duhamel's principle \eqref{eq:1st_exp_I}. This is done in Section \ref{sec:asym_u_bd}.

\subsection{Characteristic points}
In order to prove our main results, we need the non-characteristic assumption on the domain $\Omega$. We recall that for a subset $\Omega\subset M$ with smooth boundary, $x\in\partial\Omega$ is a characteristic point if $\D_x\subset T_x(\partial\Omega)$. As was the case for the classical heat content, cf.\ \cite{MR4223354}, the non-characteristic assumption is crucial to follow our strategy, since it guarantees the smoothness of the signed distance function close to $\partial\Omega$, cf. Theorem \ref{thm:sr_tub_neigh}. Nevertheless, one might ask  whether Theorem \ref{t:intro1} holds for domains with characteristic points, at least formally. 

On the one hand, the coefficients, up to order $2$, are well-defined even in presence of characteristic points, cf. \cite{MR2021034}. While, for what concerns the integrand of the third-order coefficient, its integrability, with respect to the \sr induced measure $\sigma$, is related to integrability of the \sr mean curvature $\mathcal{H}$, with respect to the Riemannian induced measure. The latter is a non-trivial property, which has been studied in \cite{DGN-Integrability}, and holds in the Heisenberg group, for surfaces with mildly-degenerate characteristic points in the sense of \cite{integrabilityH}.

On the other hand, differently from what happens in the case of the Dirichlet problem, the heat kernel $p_t(x,y)$ associated with \eqref{eqn:cauchy_prob} is smooth at the boundary of $\Omega$, for positive times, even in presence of characteristic points. Thus, in principle, there is no obstacle in obtaining an asymptotic expansion of $H_\Omega(t)$ also in that case. Moreover, in Carnot groups of step $2$, a similar result to \eqref{eq:de_giorgi} holds, cf.\ \cite{MR2972544,garofalo2020}. In particular, the characterization of sets of finite horizontal perimeter in Carnot groups of step $2$ is independent of the presence of characteristic points, indicating that an asymptotic expansion such as \eqref{eq:intro_exp} may still hold, dropping the non-characteristic assumption. 
%As a consequence, for smooth subsets of a Carnot group of step $2$, the asymptotic expansion of $H_\Omega(t)$ is still valid up to order $1$ in $\sqrt{t}$. The generalization of \eqref{eq:carnot2} to general \sr manifold will be object of future research. Finally, we mention that there exists a characterization of sets of finite perimeter, in the same spirit of \eqref{eq:de_giorgi} and \eqref{eq:carnot2}, for metric measure spaces satisfying the doubling property and an $L^1$-Poincar\'e inequality, see \cite{MR3558354}.

%Per il fatto del coefficiente di ordine 3. In effetti si potrebbe far notare in un remark che i primi termini sono tutti formalmente finiti nel caso di punto caratteristico, mentre il terzo ordine è finito se la curvatura media è integrabile rispetto alla misura riemanniana indotta. Si può menzionare che questo è legato ad una congettura di Garofalo Nhieu, e che in particolare è formalmente finito per midly non-degenerate char points in the Heisenberg group in the sense of [tuo lavoro precedente].

%Tra l'altro, a differenza del heat content classico, dove sappiamo che l'asintotica DEVE cambiare se ci sono punti caratteristici, qui non è chiaro veramente cosa aspettarsi. L'heat kernel è smooth ovunque, così come la superficie. Quindi uno potrebbe aspettarsi che, anche se le tecniche del paper non funzionano, ci sia comunque una asintotica regolare. D'altra parte, sappiamo che più si va avanti negli ordini più compaiono derivate alte della curvatura media, che ad un certo ordine diventano non-integrabili (come il nostro esempio del precedente paper).

\hiddensubsection{Notation}
Throughout the article, for a set $U\subset M$, we will use the notation $C_c^\infty(U)$, even in the compact case, so that all the statements need not be modified in the non-compact case, when the generalization is possible, cf.\ Theorem \ref{thm:doub_poincare}. Moreover, in the non-compact and complete case, the set $\Omega\subset M$ is assumed to be open and bounded.

\paragraph{Acknowledgments.} This work was supported by the Grant ANR-18-CE40-0012 of the ANR, by the Project VINCI 2019 ref.\ c2-1212 and by the European Research Council (ERC) under the European Union's Horizon 2020 research and innovation program (grant agreement No. 945655). \\
We thank Yves Colin de Verdi\`ere, Luc Hillairet and Emmanuel Tr\'elat for stimulating discussion regarding the small-time asymptotics of hypoelliptic heat kernels. We also thank the anonymous referee for valuable comments and remarks.

\section{Preliminaries}
\label{sec:prel}

We recall some essential facts in \sr geometry, following \cite{ABB-srgeom}.

\subsection{Sub-Riemannian geometry}
Let $M$ be a smooth, connected finite-dimensional manifold. A \sr structure on $M$ is defined by a set of $N$ global smooth vector fields $X_1,\ldots,X_N$, called a \emph{generating frame}. The generating frame defines a \emph{distribution} of subspaces of the tangent spaces at each point $x\in M$, given by
\begin{equation}
\label{eqn:gen_frame}
\D_x=\mathrm{span}\{X_1(x),\ldots,X_N(x)\}\subseteq T_xM.
\end{equation}
We assume that the distribution satisfies the \emph{H\"ormander condition}, i.e. the Lie algebra of smooth vector fields generated by $X_1,\dots,X_N$, evaluated at the point $x$, coincides with $T_x M$, for all $x\in M$. The generating frame induces a norm on the distribution at $x$, namely
\begin{equation}
\label{eqn:induced_norm}
\|v\|_g=\inf\left\{\sum_{i=1}^Nu_i^2\mid \sum_{i=1}^Nu_iX_i(x)=v\right\},\qquad\forall\,v\in\D_x,
\end{equation}
which, in turn, defines an inner product on $\D_x$ by polarization, which we denote by $g_x(v,v)$. Let $T>0$. We say that $\gamma \colon [0,T] \to M$ is a \emph{horizontal curve}, if it is absolutely continuous and
\begin{equation}
\dot\gamma(t)\in\D_{\gamma(t)}, \qquad\text{for a.e.}\,t\in [0,T].
\end{equation}
This implies that there exists $u:[0,T]\to\R^N$, such that
\begin{equation}
\dot\gamma(t)=\sum_{i=1}^N u_i(t) X_i(\gamma(t)), \qquad \text{for a.e.}\, t \in [0,T].
\end{equation} 
Moreover, we require that $u\in L^2([0,T],\R^N)$. If $\gamma$ is a horizontal curve, then the map $t\mapsto \|\dot\gamma(t)\|_g$ is integrable on $[0,T]$. We define the \emph{length} of a horizontal curve as follows:
\begin{equation}
\ell(\gamma) = \int_0^T \|\dot\gamma(t)\|_g dt.
\end{equation}
The \emph{\sr distance} is defined, for any $x,y\in M$, by
\begin{equation}\label{eq:infimo}
d_{\mathrm{SR}}(x,y) = \inf\{\ell(\gamma)\mid \gamma \text{ horizontal curve between $x$ and $y$} \}.
\end{equation}
By the Chow-Rashevsky Theorem, the distance $d_{\mathrm{SR}}\colon M\times M\to\R$ is finite and continuous. Furthermore it induces the same topology as the manifold one.
\begin{rmk}
The above definition includes all classical constant-rank sub-Riemannian structures as in \cite{montgomerybook,Riffordbook} (where $\D$ is a vector distribution and $g$ a symmetric and positive tensor on $\D$), but also general rank-varying sub-Riemannian structures. Moreover, the same \sr structure can arise from different generating families. 
\end{rmk}

\subsection{The relative heat content}

Let $M$ be a \sr manifold. Let $\omega$ be a smooth measure on $M$, i.e. by a positive tensor density.  The \emph{divergence} of a smooth vector field is defined by
\begin{equation}
\diverg (X)\omega=\mathcal{L}_X \omega, \qquad\forall\,X\in\Gamma(TM),
\end{equation}
where $\mathcal{L}_X$ denotes the Lie derivative in the direction of $X$. The \emph{horizontal gradient} of a function $f\in C^\infty(M)$, denoted by $\nabla f$, is defined as the horizontal vector field (i.e. tangent to the distribution at each point), such that
\begin{equation}
g_x(\nabla f(x),v)= v(f)(x),\qquad\forall\,v\in\D_x,
\end{equation} 
where $v$ acts as a derivation on $f$. In terms of a generating frame as in \eqref{eqn:gen_frame}, one has
\begin{equation}
\nabla f=\sum_{i=1}^NX_i(f)X_i,\qquad\forall\,f\in C^\infty(M).
\end{equation}
We recall the divergence theorem (we stress that $M$ is not required to be orientable): let $\Omega\subset M$ be open with smooth boundary, then
\begin{equation}
\label{eqn:diverg_thm}
\int_{\Omega}\left(f\diverg X+g(\nabla f,X)\right)d\omega=-\int_{\partial\Omega}fg(X,\nu)d\sigma,
\end{equation}
for any smooth function $f$ and vector field $X$, such that the vector field $fX$ is compactly supported. In \eqref{eqn:diverg_thm}, $\nu$ is the inward-pointing normal vector field to $\Omega$ and $\sigma$ is the induced \sr measure on $\partial\Omega$ (i.e.\ the one whose density is $\sigma=|i_\nu\omega|_{\partial\Omega}$).
  
The \emph{sub-Laplacian} is the operator $\Delta= \diverg\circ\nabla$, acting on $C^\infty(M)$. Again, we may write its expression with respect to a generating frame \eqref{eqn:gen_frame}, obtaining
\begin{equation}
\label{eq:def_delta}
\Delta f=\sum_{i=1}^N\left\{X^2_i(f)+X_i(f)\diverg (X_i)\right\},\qquad\forall\,f\in C^\infty(M).
\end{equation}
We denote by $L^2(M,\omega)$, or simply by $L^2$, the space of real functions on $M$ which are square-integrable with respect to the measure $\omega$. Let $\Omega\subset M$ be an open relatively compact set with smooth boundary. This means that the closure $\bar{\Omega}$ is a compact manifold with smooth boundary. We consider the \emph{Cauchy problem for the heat equation} on $\Omega$, that is we look for functions $u$ such that
\begin{equation}\label{eq:cauchy_prob}
\begin{aligned}
\left(\partial_t -\Delta\right)u(t,x)  & =  0, & \qquad &\forall (t,x) \in (0,\infty) \times M, \\
u(0,\cdot) & 	=  \mathds{1}_\Omega,  & \qquad &\text{in }L^2(M,\omega),
\end{aligned}
\end{equation}
where $u(0,\cdot)$ is a shorthand notation for the $L^2$-limit of $u(t,x)$ as $t\to0$. Notice that $\Delta$ is symmetric with respect to the $L^2$-scalar product and negative, moreover, if $(M,d_\mathrm{SR})$ is complete as a metric space, it is essentially self-adjoint, see \cite{MR862049}. Thus, there exists a unique solution to \eqref{eq:cauchy_prob}, and it can be represented as
\begin{equation}
u(t,x)=e^{t\Delta}\mathds{1}_\Omega(x), \qquad\forall\,x\in M,\ t>0,
\end{equation}
where $e^{t\Delta}\colon L^2\rightarrow L^2$ denotes the heat semi-group, associated with $\Delta$. We remark that for all $\varphi \in L^2$, the function $e^{t\Delta} \varphi$ is smooth for all $(t,x) \in (0,\infty) \times M$, by hypoellipticity of the heat operator and there exists a heat kernel associated with \eqref{eq:cauchy_prob}, i.e. a positive function $p_t(x,y)\in C^\infty((0,+\infty)\times M\times M)$ such that:
\begin{equation}
\label{eq:hk_def}
u(t,x)=\int_M p_t(x,y)\mathds{1}_{\Omega}(y)d\omega(y)=\int_\Omega p_t(x,y)d\omega(y).
\end{equation}

\begin{defn}[Relative heat content]
Let $u(t,x)$ be the solution to \eqref{eq:cauchy_prob}. We define the \emph{relative heat content}, associated with $\Omega$, as
\begin{equation}
\label{eq:rel_heat}
H_\Omega(t)=\int_\Omega u(t,x)d\omega(x), \qquad \forall\,t>0.
\end{equation}
\end{defn}

\begin{rmk}
\label{rmk:closed_set}
If we consider, instead of $\Omega$, a set which is the closure of an open set, then the Cauchy problem \eqref{eq:cauchy_prob} has a unique solution and relative heat content is still well-defined. 
\end{rmk}

We recall here a property of the solution to \eqref{eq:cauchy_prob}: it satisfies a weak maximum principle, meaning that
\begin{equation}
\label{eq:wmax_prin}
0\leq u(t,x)\leq 1, \qquad\forall\,x\in\Omega,\ \forall\,t>0.
\end{equation}
This can be proven following the blueprint of the Riemannian proof (see \cite[Thm.\ 5.11]{MR2569498}). 

\begin{defn}[Characteristic point]
We say that $x\in\partial\Omega$ is a \emph{characteristic point}, or tangency point, if the distribution is tangent to $\partial \Omega$ at $x$, that is
\begin{equation}\label{eqn:char_pts}
\D_x\subseteq T_x(\partial\Omega).
\end{equation}
\end{defn}
We will assume that $\partial\Omega$ has no characteristic points. We say in this case that $\Omega$ is a \emph{non-characteristic domain}. 

\subsection{Nilpotent approximation of \texorpdfstring{$M$}{M}}
\label{sec:nilpotent}

We introduce the notion of nilpotent approximation of a \sr manifold, see \cite{MR3308372,MR1421822} for details. This will be used only in Sections \ref{sec:bd_behav} and \ref{sec:asym_u_bd}.

\paragraph{Sub-Riemannian flag.}
Let $M$ be an $n$-dimensional \sr manifold with distribution $\D$. We define the \emph{flag} of $\D$ as the sequence of subsheafs $\D^k\subset TM$ such that
\begin{equation}
\D^1=\D,\qquad \D^{k+1}=\D^k+[\D,\D^k],\qquad\forall\,k\geq 1,
\end{equation}
with the convention that $\D^0=\{0\}$. Under the H\"ormander condition, the flag of the distribution defines an exhaustion of $T_xM$, for any point $x\in M$, i.e. there exists $r(x)\in\N$ such that:
\begin{equation}
\label{eq:sr_flag}
\{0\}=\D^0_x\subset\D^1_x\subset \ldots\subset \D^{r(x)-1}_x\subsetneq \D^{r(x)}_x=T_xM.
\end{equation}
The number $r(x)$ is called \emph{degree of nonholonomy} at $x$. We set $n_k(x) = \dim \D^k_x$, for any $k\geq 0$, then the collection of $r(x)$ integers 
\begin{equation}
\left(n_1(x),\ldots,n_{r(x)}(x)\right)
\end{equation}
is called \emph{growth vector} at $x$, and we have $n_{r(x)}(x)=n=\dim M$. Associated with the growth vector, we can define the \emph{\sr weights} $w_i(x)$ at $x$, setting for any $i\in\{1,\ldots,n\}$, 
\begin{equation}
\label{eq:sr_weights}
w_i(x)=j,\qquad\text{if and only if}\qquad n_{j-1}(x)+1\leq i\leq n_j(x).
\end{equation}
A point $x\in M$ is said to be \emph{regular} if the growth vector is constant in a neighborhood of $x$, and \emph{singular} otherwise. The \sr structure on $M$ is said to be \emph{equiregular} if all points of $M$ are regular. In this case, the weights are constant as well on $M$. Finally, given any $x\in M$, we define the \emph{homogeneous dimension} of $M$ at $x$ as 
\begin{equation}
\mathcal{Q}(x) = \sum_{i=1}^{r(x)} i ( n_i(x)-n_{i-1}(x) ) =\sum_{i=1}^n w_i(x).
\end{equation}
We recall that, if $x$ is regular, then $\mathcal{Q}(x)$ coincides with the Hausdorff dimension of $(M,d_{\mathrm{SR}})$ at $x$, cf.\ \cite{MR806700}. Moreover, $\mathcal{Q}(x)>n$, for any $x\in M$ such that $\D_x\subsetneq T_xM$. 

\paragraph{Privileged coordinates.}
Let $M$ be a \sr manifold with generating frame \eqref{eqn:gen_frame} and $f$ be the germ of a smooth function $f$ at $x\in M$. We call \emph{nonholonomic derivative} of order $k\in\N$ of $f$, the quantity
\begin{equation}
X_{j_1}\cdots X_{j_k}f(x),
\end{equation}
for any family of indexes $\{j_1,\ldots,j_k\}\subset\{1,\ldots,N\}$. Then, the \emph{nonholonomic order} of $f$ at the point $x$ is 
\begin{equation}
\mathrm{ord}_x(f)=\min\left\{k\in\N\mid \exists\{j_1,\ldots,j_k\}\subset\{1,\ldots,N\}\text{ s.t. }X_{j_1}\cdots X_{j_k}f(x)\neq 0\right\}.
\end{equation}
%The notion of nonholonomic order extends to vector fields. Let $Y$ be a germ of a smooth vector field at $x\in M$, then its nonholonomic order at $x$ is
%\begin{equation}
%\mathrm{ord}_x(Y)=\sup\left\{\theta\in\R\mid\mathrm{ord}_x(Yf)\geq\theta+\mathrm{ord}_x(f),\, \forall\,f \text{ germ at }x\right\}.
%\end{equation}

\begin{defn}[Privileged coordinates]
Let $M$ be a $n$-dimensional \sr manifold and $x\in M$. A system of local coordinates $(z_1,\ldots,z_n)$ centered at $x$ is said to be \emph{privileged} at $x$ if 
\begin{equation}
\mathrm{ord}_x(z_j) = w_j(x),\qquad\forall\, j=1,\ldots,n.
\end{equation}
\end{defn}
Notice that privileged coordinates $(z_1,\ldots,z_n)$ at $x$ satisfy the following property
\begin{equation}
\label{eq:lin_adapted}
{\partial_{z_i}}_{\rvert x} \in \D^{w_i}_x,\qquad {\partial_{z_i}}_{\rvert x}\notin\D^{w_i-1}_x,\qquad\forall i=1,\ldots,n.
\end{equation}
A local frame of $TM$ consisting of $n$ vector fields $\{Z_1,\ldots,Z_n\}$ and satisfying \eqref{eq:lin_adapted} is said to be \emph{adapted} to the flag \eqref{eq:sr_flag} at $x$. Thus, privileged coordinates are always adapted to the flag. In addition, given a local frame adapted to the \sr flag at $x$, say $\{Z_1,\ldots,Z_n\}$, we can define a set of privileged coordinates at $x$, starting from $\{Z_1,\ldots,Z_n\}$, i.e.
\begin{equation}\label{eq:ex_privil}
\R^n\ni(z_1,\ldots,z_n)\mapsto e^{z_1Z_1}\circ\cdots\circ e^{z_nZ_n}(x).
\end{equation}
Moreover, in these coordinates, the vector field $Z_1$ is exactly $\partial_{z_1}$.

\paragraph{Nilpotent approximation.}
Let $M$ be a \sr manifold and let $x\in M$ with weights as in \eqref{eq:sr_weights}. Consider $\psi=(z_1,\ldots,z_n)\colon U\rightarrow V$ a chart of privileged coordinates at $x$, where $U\subset M$ is a relatively compact neighborhood of $x$ and $V\subset\R^n$ is a neighborhood of $0$. Then, for any $\eps\in\R$, we can define the \emph{dilation} at $x$ as 
\begin{equation}
\label{eq:anis_dil}
\delta_\eps\colon\R^n\rightarrow\R^n;\qquad\delta_\eps(z)=\left(\eps^{w_1(x)}z_1,\ldots,\eps^{w_n(x)}z_n\right).
\end{equation} 
Using such dilations, we obtain the nilpotent (or first-order) approximation of the generating frame \eqref{eqn:gen_frame}, indeed setting $Y_i=\psi_*X_i$, for any $i=1\ldots,N$, define
\begin{equation}
\label{eq:nilpot_approx}
\widehat X_i^x =\lim_{\eps\to0}\eps\delta_{\frac{1}{\eps}*}(Y_i),\qquad\forall\,i=1\ldots,N,
\end{equation}
where the limit is taken in the $C^\infty$-topology of $\R^n$. Notice that the vector field $\widehat X_i^x$ is defined on the whole $\R^n$, even though $Y_i$ was defined only on $V\subset\R^n$. 

\begin{thm}
\label{thm:nilp_approx}
Let $M$ be a $n$-dimensional \sr manifold with generating frame $\{X_1,\ldots,X_N\}$ and consider its first-order approximation at $x$ as in \eqref{eq:nilpot_approx}. Then, the frame $\{\widehat X_1^x,\ldots,\widehat X_N^x\}$ of vector fields on $\R^n$ generates a nilpotent Lie algebra of step $r(x)=w_n(x)$ and satisfies the H\"ormander condition.
\end{thm}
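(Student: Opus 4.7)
The plan is to exploit the weight filtration induced by privileged coordinates, and to deduce both the existence of the limit defining $\widehat X_i^x$ and the nilpotency of the generated Lie algebra from homogeneity considerations. First I would work in privileged coordinates $(z_1,\ldots,z_n)$ at $x$, which induce a weighted grading on polynomial vector fields on $\R^n$ by declaring $\deg(z^\alpha\partial_{z_j})=\sum_i w_i(x)\alpha_i-w_j(x)$. Any smooth vector field $Y$ defined near $0$ admits a formal expansion $Y\sim\sum_{h\geq h_0}Y^{(h)}$, where each $Y^{(h)}$ is polynomial and homogeneous of degree $h$ with respect to $\delta_\eps$, i.e.\ $(\delta_\eps)_*Y^{(h)}=\eps^{-h}Y^{(h)}$. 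The crucial algebraic input, which characterises privileged coordinates, is that each pushed-forward generator $Y_i=\psi_*X_i$ has $h_0=-1$; this follows from the nonholonomic-order computation, since the coordinate function $z_j$ has nonholonomic order $w_j(x)$, forcing the Taylor coefficient of $Y_i$ along $\partial_{z_j}$ to vanish to weighted order at least $w_j(x)-1$ at the origin.

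A direct computation then gives
\begin{equation}
\eps(\delta_{1/\eps})_*Y_i=\eps\sum_{h\geq -1}\eps^h Y_i^{(h)}=Y_i^{(-1)}+\sum_{h\geq 0}\eps^{h+1}Y_i^{(h)},
\end{equation}
which converges in the $C^\infty$ topology on compact subsets of $\R^n$ to $\widehat X_i^x:=Y_i^{(-1)}$, a polynomial vector field on $\R^n$ homogeneous of degree $-1$ under $\delta_\eps$. To obtain nilpotency I would then argue purely by homogeneity: any iterated Lie bracket of length $k$ among $\{\widehat X_1^x,\ldots,\widehat X_N^x\}$ is polynomial and homogeneous of degree $-k$. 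A nonzero monomial $z^\alpha\partial_{z_j}$ of degree $-k$ satisfies $\sum_i w_i(x)\alpha_i=w_j(x)-k\geq 0$, hence $w_j(x)\geq k$. For $k>r(x)=w_n(x)$ no such index exists, so every iterated bracket of length exceeding $r(x)$ vanishes, proving that the Lie algebra generated by the $\widehat X_i^x$ is nilpotent of step at most $r(x)$.

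For the H\"ormander condition and sharpness of the step, I would observe that, since privileged coordinates are adapted to the flag \eqref{eq:lin_adapted}, evaluation at the origin yields $\widehat\D^k(0)=\mathrm{span}\{\partial_{z_i}:w_i(x)\leq k\}$, which matches $\D^k_x$ under $d\psi_x$. Taking $k=r(x)$ gives $\widehat\D^{r(x)}(0)=T_0\R^n$, i.e.\ the H\"ormander condition at $0$; the polynomial, homogeneous form of each $\widehat X_i^x$, combined with the fact that the generated Lie algebra is left-invariant for the associated simply connected nilpotent Lie group structure on $\R^n$, then propagates the H\"ormander condition to every point. Sharpness follows because $\widehat\D^{r(x)-1}(0)=\D^{r(x)-1}_x\subsetneq T_xM$ by definition of $r(x)$, so the step cannot be strictly less than $r(x)$.

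The main obstacle is the first step: rigorously identifying the weighted grading with Taylor expansions in privileged coordinates and establishing that $h_0=-1$ uniformly for all pushed-forward generators. This is the technical core of Bellaïche's theory of privileged coordinates, which I would cite rather than reprove; once it is in place, the remainder of the argument reduces to the elementary bookkeeping on degrees and brackets sketched above.
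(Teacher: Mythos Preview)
The paper does not actually prove this theorem: immediately after the statement it writes ``The proof of this theorem can be found in \cite{MR3308372}'' and moves on. Your proposal is therefore not competing with a proof in the paper but rather sketching the standard argument that the cited reference (Jean's monograph, and ultimately Bella\"iche) contains. The outline you give---identify $\widehat X_i^x$ with the weighted-degree $-1$ part of the Taylor expansion of $Y_i$ in privileged coordinates, then use that iterated brackets of homogeneous degree $-1$ fields have degree $-k$ and hence vanish once $k>r(x)$, and finally read off the H\"ormander condition at $0$ from the adapted-frame property---is exactly that standard argument, and is correct.

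One small point worth tightening: the step where you propagate the H\"ormander condition from $0$ to all of $\R^n$ by invoking ``left-invariance for the associated simply connected nilpotent Lie group structure'' is slightly circular as stated, since one usually \emph{constructs} that group structure from the $\widehat X_i^x$ after knowing they span at the origin. A cleaner way to phrase it is: the $\widehat X_i^x$ are polynomial and complete, the map $(t_1,\ldots,t_n)\mapsto e^{t_1 Z_1}\circ\cdots\circ e^{t_n Z_n}(0)$ built from a basis of the generated Lie algebra is a global diffeomorphism of $\R^n$ (by nilpotency and homogeneity), and pushing forward via the flows carries the spanning property from $0$ to any point. This is again exactly what the cited reference does.
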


The proof of this theorem can be found in \cite{MR3308372}. Recall that a Lie algebra is said to be nilpotent of step $s$ if $s$ is the smallest integer such that all the brackets of length greater than $s$ are zero. 

\begin{defn}[Nilpotent approximation]
Let $M$ be a \sr manifold and let $x\in M$. Then, Theorem \ref{thm:nilp_approx} implies that the frame $\{\widehat X_1^x,\ldots,\widehat X_N^x\}$ is a generating frame for a \sr structure on $\R^n$: we denote the resulting \sr manifold $\widehat M^x$. This is the so-called \emph{nilpotent approximation} of $M$ at the point $x$. 
\end{defn}

Notice that the \sr distance of $\widehat M^x$, denoted by $\hat d^x$, is $1$-homogeneous with respect to the dilations \eqref{eq:anis_dil}.

\begin{rmk}
Up to isometries, the nilpotent approximation of $M$ at $x$ coincides with the Gromov-Hausdorff metric tangent space of $(M,d_{\mathrm{SR}})$ at $x$. Moreover, $\widehat M^x$ is isometric to a quotient of a Carnot group. See \cite{MR1421823,MR1421822,montgomerybook} for further details. 
\end{rmk}

\paragraph{Nilpotentized sub-Laplacian.}
Let $M$ be a \sr manifold, equipped with a smooth measure $\omega$, and let $(z_1,\ldots,z_n)$ be a set of privileged coordinates at $x\in M$. We will use the same symbol $\omega$ to denote measure in coordinates. The \emph{nilpotentization} $\hat{\omega}^x$ of $\omega$ at $x$ is defined as follows:
\begin{equation}
\label{eq:nil_meas}
\langle\hat{\omega}^{x},f\rangle = \lim_{\eps\to 0}\frac{1}{\left|\eps\right|^{\mathcal{Q}(x)}} \langle \delta_\eps^* \omega,f\rangle,\qquad\forall\, f\in C_c^\infty(\R^n).
\end{equation}
Notice that, denoting by $dz=dz_1\ldots dz_n$ the Lebesgue measure on $\R^n$, we have 
\begin{equation}
\label{eq:hom_meas}
\delta_\eps^*(dz)=\left|\eps\right|^{\mathcal{Q}(x)}dz,\qquad\forall\,\eps\neq 0, 
\end{equation}
thus, the limit in \eqref{eq:nil_meas} exists. Finally, we can define the \emph{nilpotentized sub-Lapla\-cian} according to \eqref{eq:def_delta}, acting on $C^\infty(\R^n)$, i.e.
\begin{equation}
\label{eq:delta_nilp}
\widehat{\Delta}^{x} = \mathrm{div}_{\hat \omega^x}\left(\widehat\nabla^x\right)=\sum_{i=1}^N (\widehat{X}^{x}_i)^2.
\end{equation}
We remark that in \eqref{eq:delta_nilp} there is no divergence term, since 
\begin{equation}
\mathrm{div}_{\hat{\omega}^{x}}(\widehat{X}^{x}_i)=0 \qquad \forall i\in\{1,\ldots,N\}.
\end{equation}
As in the general \sr context, in the nilpotent approximation $\widehat M^x$, we may consider the Cauchy heat problem \eqref{eq:cauchy_prob} in $L^2(\R^n,\hat\omega^x)$. We will the denote the associated heat kernel as 
\begin{equation}
\hat p_t^x(z,z')\in C^\infty((0,+\infty)\times\R^n\times\R^n).
\end{equation}

\paragraph{Heat kernel asymptotics.} Let $M$ be a \sr manifold, equipped with a smooth measure $\omega$ and denote by $p_t(x,y)$ the heat kernel \eqref{eq:hk_def}. We have the following result.

\begin{thm}[{\cite[Thm.\ A]{YHT-2}}]
\label{t:hk_exp}
Let $M$ be a \sr manifold and let $\psi\colon U\rightarrow V$ be a chart of privileged coordinates at $x\in M$. Then, for any $m\in\N$,  
\begin{equation}
\label{eq:hk_exp}
|\eps|^{\mathcal{Q}(x)}p_{\eps^2\tau}(\delta_\eps(z),\delta_\eps(z'))=\hat p_\tau^x(z,z')+\sum_{i=1}^m \eps^i f_i^x(\tau,z,z')+o(|\eps|^m),\qquad\text{as }\eps\to 0,
\end{equation}
in the $C^\infty$-topology of $(0,\infty)\times V\times V$, where $f_i^x$'s are smooth functions satisfying the following homogeneity property: for $i=0,\ldots,m$
\begin{equation}
\label{eq:hom_prop}
|\eps|^{\mathcal{Q}(x)}\eps^{-i}f_i^x(\eps^2\tau,\delta_\eps(z),\delta_\eps(z'))=f_i^x(\tau,z,z'), \qquad\forall\,(\tau,z,z')\in (0,\infty)\times \R^n\times \R^n,
\end{equation}
where, for $i=0$, we set $f_0^x(\tau,z,z')=\hat p^x_\tau(z,z')$. In \eqref{eq:hk_exp}, we are considering the heat kernel $p_t$ in coordinates, with a little abuse of notation. 
\end{thm}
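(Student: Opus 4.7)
The plan is to reduce the statement to an asymptotic analysis of a rescaled family of parabolic problems on $\R^n$ whose operators converge to $\widehat{\Delta}^x$ with polynomial-in-$\eps$ corrections, and then to construct the correction terms $f_i^x$ iteratively via Duhamel's formula. First I would introduce the rescaled kernel
\[
P_\eps(\tau,z,z') := |\eps|^{\mathcal{Q}(x)}\, p_{\eps^2\tau}(\delta_\eps(z),\delta_\eps(z')),
\]
so that \eqref{eq:hk_exp} becomes an asymptotic expansion of $P_\eps$ with leading term $\hat p^x_\tau(z,z')$. Using $\delta_\eps^*(dz)=|\eps|^{\mathcal{Q}(x)} dz$, a direct change of variables shows that $P_\eps$ is the heat kernel, with respect to the rescaled measure $|\eps|^{-\mathcal{Q}(x)}\delta_\eps^*\omega$ (which converges to $\hat\omega^x$ by \eqref{eq:nil_meas}), of the rescaled operator $\mathcal{L}_\eps := \eps^2 (\delta_{1/\eps})_*\Delta$ on $\R^n$. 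By the structure theory of privileged coordinates (Bella\"iche \cite{MR1421822}), each generating field admits a Taylor-type expansion
\[
\eps\cdot (\delta_{1/\eps})_* X_i = \widehat{X}_i^x + \sum_{k\geq 1}\eps^k X_i^{(k)},
\]
in which the $X_i^{(k)}$ are smooth polynomial vector fields on $\R^n$ of controlled dilation degree; the density of $\omega$ in privileged coordinates admits an analogous expansion around $\hat\omega^x$. Substituting these into \eqref{eq:def_delta} produces a formal series $\mathcal{L}_\eps = \widehat{\Delta}^x + \sum_{k\geq 1}\eps^k \mathcal{L}_k$, where each $\mathcal{L}_k$ is a second-order polynomial-coefficient operator of prescribed dilation degree.

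Next I would plug the ansatz $P_\eps \sim \hat p^x_\tau + \sum_{i\geq 1}\eps^i f_i^x$ into $(\partial_\tau - \mathcal{L}_\eps) P_\eps = 0$ and match powers of $\eps$, obtaining the recursion
\[
(\partial_\tau - \widehat{\Delta}^x) f_i^x = \sum_{j=1}^i \mathcal{L}_j f_{i-j}^x, \qquad \lim_{\tau\to 0^+} f_i^x(\tau,\cdot,\cdot) = 0 \quad (i\geq 1),
\]
which is uniquely solvable by Duhamel's formula against $\hat p^x$, inductively producing smooth functions $f_i^x$. The homogeneity property \eqref{eq:hom_prop} is then proved by induction on $i$: the base case $i=0$ is the standard scaling of the heat kernel of a homogeneous nilpotent sub-Laplacian, and at each inductive step the measure factor $|\eps|^{\mathcal{Q}(x)}$, the time rescaling $\eps^2\tau$, and the dilation degree of $\mathcal{L}_j$ conspire to yield exactly the compensating factor $\eps^{-i}$ required.

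Finally I would control the remainder $R_m^\eps := P_\eps - \hat p^x - \sum_{i=1}^m \eps^i f_i^x$, which by construction satisfies $(\partial_\tau - \mathcal{L}_\eps) R_m^\eps = O(|\eps|^{m+1})$ on compact sets with vanishing initial datum, by a Duhamel representation against $P_\eps$ itself. The hardest part of the argument will be to upgrade the resulting pointwise bound $R_m^\eps = o(|\eps|^m)$ to the $C^\infty$ convergence on compact subsets of $(0,\infty)\times V\times V$ demanded by \eqref{eq:hk_exp}: this needs uniform-in-$\eps$ subelliptic regularity for the family $\partial_\tau - \mathcal{L}_\eps$ (which is available because the H\"ormander condition is inherited by the limit operator $\widehat{\Delta}^x$ and is stable under the rescaling), together with uniform Gaussian off-diagonal upper bounds of Jerison--S\'anchez-Calle type for the kernel of $\mathcal{L}_\eps$. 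A subsidiary but important technical point is that the privileged chart $\psi\colon U\to V$ only covers a fixed neighborhood of $x$, so one has to show separately that the contribution to $p_{\eps^2\tau}(\delta_\eps z, \delta_\eps z')$ coming from the complement of the chart is $O(|\eps|^\infty)$; this follows from global Gaussian upper bounds for $p_t$ combined with the fact that the sub-Riemannian distance from $x$ of points on the boundary of $\delta_\eps^{-1}(U)$ remains bounded away from zero as $\eps\to 0$.
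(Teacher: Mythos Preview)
The paper does not prove this theorem: it is quoted verbatim from \cite[Thm.\ A]{YHT-2} and used as a black box (notably in the proof of Theorem~\ref{t:limit_u} and in Section~\ref{sec:asym_u_bd}). There is therefore no ``paper's own proof'' to compare your proposal against.

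That said, your sketch is a faithful outline of the standard approach to results of this type and, in broad strokes, matches what is done in the original reference: rescale via privileged dilations, expand the operator as $\widehat{\Delta}^x$ plus higher-order corrections, build the $f_i^x$ recursively by Duhamel against $\hat p^x$, and close the remainder estimate with uniform-in-$\eps$ hypoelliptic regularity and Gaussian bounds. The homogeneity \eqref{eq:hom_prop} is indeed forced by the dilation weights of the correction operators $\mathcal{L}_k$. One point to be careful about in a full write-up is the localization step you flag at the end: the paper itself only assumes a general sub-Riemannian manifold in the statement, so the global Gaussian upper bounds you invoke are not automatic without further hypotheses (compactness, or doubling plus Poincar\'e as in Theorem~\ref{thm:doub_poincare}); in \cite{YHT-2} this is handled by working locally from the outset.
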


\begin{rmk}
We will drop the dependence on the center of the privileged coordinates if there is no confusion.
\end{rmk}

\section{Small-time asymptotics of \texorpdfstring{$u(t,x)$}{u(t,x)} at the boundary}
\label{sec:bd_behav}

We prove here Theorem \ref{t:intro3}, regarding the zero-order asymptotics of $u(t,\cdot)\rvert_{\partial\Omega}$ as $t\to0$.

\begin{thm}\label{t:limit_u}
Let $M$ be a compact \sr manifold, equipped with a smooth measure $\omega$ and let $\Omega\subset M$ be an open subset, whose boundary is smooth and has no characteristic points. Let $x\in\partial\Omega$ and consider a chart of privileged coordinates $\psi\colon U\rightarrow V\subset\R^n$ centered at $x$, such that $\psi(U\cap\Omega)= V\cap\{z_1>0\}$. Then,
\begin{equation}
\label{eq:0th_limit}
\lim_{t\to 0}u(t,x)=\int_{\{z_1>0\}}\hat p^x_1(0,z)d\hat\omega^x(z)=\frac{1}{2}, \qquad \forall\,x\in\partial\Omega,
\end{equation}
where $\hat\omega^x$ denotes the nilpotentization of $\omega$ at $x$ and $\hat p_t^x$ denotes the heat kernel associated with the nilpotent approximation of $M$ at $x$ and measure $\hat\omega^x$.
\end{thm}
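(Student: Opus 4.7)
The plan is to identify $u(t,x)$, in the limit, with the mass that the heat kernel of the nilpotent approximation assigns to the half-space $\{z_1>0\}$, and then to compute that mass directly from the Gaussian law of the weight-one coordinate of the nilpotent diffusion.

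\textbf{Step 1: Localization and rescaling.} Because $M$ is compact and $x\in\partial\Omega$ is a fixed point, the Jerison--S\'anchez-Calle Gaussian upper bound for the sub-Riemannian heat kernel gives
\begin{equation}
u(t,x)=\int_{U\cap\Omega}p_t(x,y)d\omega(y)+O(t^\infty),\qquad\text{as }t\to 0,
\end{equation}
so only the behavior of $p_t$ near $x$ matters. I push forward via the privileged chart $\psi$, writing $\omega=\rho(z)dz$ in coordinates with $\rho$ smooth and $\rho(0)>0$, and then perform the anisotropic change of variables $z=\delta_{\sqrt t}(w)$, whose Jacobian is $t^{\mathcal{Q}(x)/2}$. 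Noting that $\{z_1>0\}$ is preserved by $\delta_{\sqrt t}$ (weight of $z_1$ is $1$), I obtain
\begin{equation}
u(t,x)=\int_{\delta_{1/\sqrt t}(V)\cap\{w_1>0\}}\!\!\bigl[t^{\mathcal{Q}(x)/2}p_t(0,\delta_{\sqrt t}(w))\bigr]\,\rho(\delta_{\sqrt t}(w))\,dw+O(t^\infty).
\end{equation}

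\textbf{Step 2: Passage to the limit.} Theorem \ref{t:hk_exp} applied with $\tau=1$, $\eps=\sqrt t$, $z=0$, $z'=w$ gives
\begin{equation}
t^{\mathcal{Q}(x)/2}p_t(0,\delta_{\sqrt t}(w))\longrightarrow\hat p_1^x(0,w),
\end{equation}
uniformly on compact subsets of $\R^n$; simultaneously $\rho(\delta_{\sqrt t}(w))\to\rho(0)$ pointwise and $\delta_{1/\sqrt t}(V)\nearrow\R^n$. To justify interchanging limit and integral I need a $t$-uniform integrable majorant on $\{w_1>0\}$; this is provided by the Gaussian upper bound combined with the ball--box comparison $d_{\mathrm{SR}}(0,\delta_\eps(w))\gtrsim\eps\,\hat d^x(0,w)$, which yields
\begin{equation}
t^{\mathcal{Q}(x)/2}p_t(0,\delta_{\sqrt t}(w))\leq C e^{-c\,\hat d^x(0,w)^2},
\end{equation}
integrable with respect to $d\hat\omega^x$. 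Dominated convergence then gives
\begin{equation}
\lim_{t\to 0}u(t,x)=\int_{\{w_1>0\}}\hat p_1^x(0,w)\,d\hat\omega^x(w).
\end{equation}

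\textbf{Step 3: Evaluation of the integral.} The remaining point is to show that this mass equals $1/2$. I will argue by looking at the weight-one coordinate of the nilpotent diffusion starting at $0$. In privileged coordinates, each nilpotent generator $\widehat X_i^x$ is homogeneous of order $-1$, so its $\partial_{w_1}$-component is homogeneous of weighted degree $w_1-1=0$, i.e.\ a constant $c_i=(\widehat X_i^x)_1(0)$. Consequently the It\^o SDE
\begin{equation}
dZ_1(t)=\sum_{i=1}^N c_i\,dW_i(t),\qquad Z_1(0)=0,
\end{equation}
is a pure Gaussian: $Z_1(1)\sim\mathcal{N}(0,\sigma^2)$ with $\sigma^2=\sum_i c_i^2$. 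Non-degeneracy $\sigma^2>0$ follows from the fact that $\partial_{w_1}|_0\in\D_x$ (by definition of privileged coordinates), hence lies in $\mathrm{span}\{X_1(x),\dots,X_N(x)\}$, forcing some $c_i\neq 0$. The density of $Z_1(1)$ against $\hat\omega^x$ is precisely the $w_1$-marginal of $\hat p_1^x(0,w)\,d\hat\omega^x(w)$, so by symmetry of the Gaussian
\begin{equation}
\int_{\{w_1>0\}}\hat p_1^x(0,w)\,d\hat\omega^x(w)=\mathbb{P}[Z_1(1)>0]=\tfrac{1}{2},
\end{equation}
which concludes the proof. The main obstacle I anticipate is the $t$-uniform Gaussian majorization needed for the dominated convergence in Step 2, since the expansion in Theorem \ref{t:hk_exp} is only stated in $C^\infty_{\mathrm{loc}}$ and the integration region $\{w_1>0\}$ is unbounded; everything else is a calibration of privileged coordinates and standard SDE bookkeeping.
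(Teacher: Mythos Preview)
Your proof is correct and follows the same overall scheme as the paper (localize, rescale via dilations, invoke Theorem~\ref{t:hk_exp}, control tails by Gaussian upper bounds and ball--box), but two steps are handled differently. First, you pass to the limit in one dominated-convergence step with the $t$-uniform majorant $Ce^{-c\hat d^x(0,w)^2}$, whereas the paper introduces an auxiliary scale $L>1$, rescales by $\delta_{1/\sqrt{tL}}$ so that Theorem~\ref{t:hk_exp} is only applied on the fixed compact $\overline{V_1}$ at $\tau=1/L$, bounds the remaining piece $V_1\setminus V_{\sqrt{tL}}$ by $\tilde Ce^{-c\beta L/2}$, and then sends $L\to\infty$. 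Your route is cleaner but, as you anticipate, needs the pointwise convergence $t^{\mathcal Q/2}p_t(0,\delta_{\sqrt t}(w))\to\hat p_1^x(0,w)$ for \emph{all} $w\in\R^n$, while Theorem~\ref{t:hk_exp} is only stated on $V$; this gap closes by one use of homogeneity~\eqref{eq:hom_prop}: for $w\notin V$ pick $s\in(0,1)$ with $\delta_s(w)\in V$ and apply the theorem with $(\tau,\eps,z')=(s^2,\sqrt t/s,\delta_s(w))$, obtaining the same limit. Second, to get the value $\tfrac12$ you observe that the weight-one component of each $\widehat X_i^x$ is constant, so the first coordinate of the nilpotent diffusion is a nondegenerate one-dimensional Gaussian and $\mathbb P[Z_1(1)>0]=\tfrac12$; the paper instead uses the parity $\hat p_t(0,z)=\hat p_t(0,\delta_{-1}(z))$ together with stochastic completeness of $\widehat M^x$. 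Your probabilistic argument is more explicit about the mechanism, while the paper's parity argument stays purely analytic and avoids any SDE bookkeeping.
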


\begin{rmk}
A chart of privileged coordinates, such that $\psi(U\cap\Omega)=V\cap\{z_1>0\}$ always exists, provided that $\partial\Omega$ has no characteristic points. Indeed, in this case, there exists a tubular neighborhood of the boundary, cf.\ Theorem \ref{thm:sr_tub_neigh}, which is built through the flow of $\nabla\deltas$, namely 
\begin{equation}
G\colon (-\smallpar,\smallpar)\times\partial\Omega\rightarrow \Omega_{-\smallpar}^{\smallpar};\qquad G(t,q)=e^{t\nabla\deltas}(q),
\end{equation}
is a diffeomorphism such that $G_*\partial_t=\nabla\deltas$ and $\deltas(G(t,q))=t$. Here $\delta\colon M\rightarrow \R$ is the signed distance function\footnote{We warn the reader that $\delta$ without a subscript always denotes the signed distance function and should not be confused with dilations $\delta_\eps$.} from $\partial\Omega$ and $\Omega_{-\smallpar}^{\smallpar}=\{-\smallpar<\delta<\smallpar\}$, see Section \ref{ssec:meanval} for precise definitions. Therefore, choosing an adapted frame for the distribution at $x$, say $\{Z_1,\ldots,Z_n\}$ where $Z_1=\nabla\deltas$, we can define a set of privileged coordinates as in \eqref{eq:ex_privil}:
\begin{equation}
\label{eq:bd_priv}
\R^n\ni(z_1,\ldots,z_n)\mapsto e^{z_1Z_1}\circ \underbrace{e^{z_{2}Z_{2}}\circ\cdots\circ e^{z_nZ_n}(x)}_{\varphi(z_2,\ldots,z_n)}=G(z_1,\varphi(z_2,\ldots,z_n)).
\end{equation} 
The resulting set of coordinates $\psi$ satisfies $\psi_*(\nabla\deltas)=\partial_{z_1}$ and, denoting by $V$ the neighborhood of $0$ in $\R^n$ where $\psi$ is invertible, $\psi(U\cap\Omega)=\{z_1>0\}\cap V$. Here, $e^{sX}(q)$ denotes the flow of the vector field $X$, starting at $q$, evaluated at time $s$.
\end{rmk}

\begin{proof}[Proof of Theorem \ref{t:limit_u}]
Let $p_t(x,y)$ be the heat kernel of $M$, then we may write
\begin{equation}
u(t,x)=\int_\Omega p_t(x,y)d\omega(y), \qquad \forall\,x\in M.
\end{equation}
For a fixed $x\in M$, denoting by $U$ any relatively compact neighborhood of $x$, we have
\begin{equation}
\label{eq:restriction}
\begin{split}
u(t,x)&=\int_{U\cap\Omega} p_t(x,y)d\omega(y)+\int_{\Omega\setminus U}p_t(x,y)d\omega(y)\\
			&=\int_{U\cap\Omega} p_t(x,y)d\omega(y)+O(t^\infty),
\end{split}
\end{equation}
as $t\to 0$. Indeed, since the heat kernel is exponentially decaying outside the diagonal, cf.\ \cite[Prop.\ 3]{JS-estimates}, 
\begin{equation}
\label{eq:js_est}
\int_{\Omega\setminus U}p_t(x,y)d\omega(y)\leq\omega(\Omega\setminus U)C_Ue^{-\frac{c_U}{t}}=O(t^\infty), \qquad\text{as }t\to 0. 
\end{equation}
as $t\to 0$. Now, for $x\in\partial\Omega$, fix the set of privileged coordinates $\psi\colon U\rightarrow V\subset\R^n$, defined as in the statement and assume without loss of generality that $\delta_\eps(V)\subset V$ for any $|\eps|\leq 1$, where $\delta_\eps$ is the dilation \eqref{eq:anis_dil} of the nilpotent approximation of $M$. Also set 
\begin{equation}
V_\eps=\delta_\eps\left(V\cap\{z_1>0\}\right),\qquad\forall\,|\eps|\leq 1.
\end{equation}
when the limits exist, we have:
\begin{equation}
\label{eq:limit_u}
\lim_{t\to 0}u(t,x)=\lim_{t\to 0}\int_{U\cap\Omega} p_t(x,y)d\omega(y)=\lim_{t\to 0}\int_{V_1}  p_t(0,z)d \omega(z),
\end{equation}
where, in the last equation, we are considering the expression of the heat kernel and the measure in coordinates. We want to apply \eqref{eq:hk_exp} at order $1$ in $\eps$, so let us rephrase the statement as follows: for any compact set $K\subset V$,  
\begin{equation}
\label{eq:hk_exp_rem}
|\eps|^{\mathcal{Q}}p_{\eps^2\tau}(0,\delta_\eps(z))=\hat p_\tau(0,z)+\eps R(\eps,\tau,z),\qquad\text{as }\eps\to 0,
\end{equation}
where $R$ is a smooth function such that
\begin{equation}
\label{eq:rem1_est}
\sup_{\substack{\eps\in[-1,1] \\ z\in K}}\left|R(\eps,\tau,z)\right|\leq C(\tau,K),
\end{equation}
with $C(\tau,K)>0$. Notice that \eqref{eq:rem1_est} is not uniform in $\tau$, in the sense that $\tau\mapsto C(\tau,K)$ can explode as $\tau\to0$, in general. Moreover, without loss of generality and, up to restrictions of $U$, we can assume that \eqref{eq:rem1_est} holds globally on $\overline V_1$. For a fixed parameter $L>1$, we set $\tau=1/L$ and $\eps^2=t L$ in \eqref{eq:hk_exp_rem}, obtaining:
\begin{equation}
|t L|^{\mathcal{Q}/2}p_{t}(0,\delta_{\sqrt{t L}}(z))=\hat p_{1/L}(0,z)+\sqrt{t L} R(\sqrt{t L},1/L,z),\qquad\text{as }t\to 0,
\end{equation}
where the remainder $R$ is bounded as $t\to 0$ on the compact sets of $V$, but with a constant depending on $L$. Inserting the above expansion in \eqref{eq:limit_u}, and writing the measure in coordinates $d\omega(z)=\omega(z)dz$ with $\omega(\cdot)\in C^\infty(V_1)$, we have:
%\vspace{0.3cm}
\begin{flalign}
u(t,x) & = \int_{V_1}p_t(0,z)\omega(z)dz+O(t^\infty)\\
			 & = \int_{V_{\sqrt{t L}}} p_t(0,z)\omega(z)dz + \int_{V_1\setminus V_{\sqrt{t L}}} p_t(0,z)d\omega(z)  +O(t^\infty) \\
			 & = \int_{V_1} |t L|^{\mathcal{Q}/2}p_{t}(0,\delta_{\sqrt{t L}}(z))\omega(\delta_{\sqrt{t L}}(z))dz + \int_{V_1\setminus V_{\sqrt{t L}}} p_t(0,z)d\omega(z)  +O(t^\infty)\\
			 & = \int_{V_1} \left(\hat p_{1/L}(0,z)+\sqrt{t L} R(\sqrt{t L},1/L,z)\right)\omega(\delta_{\sqrt{t L}}(z))dz\label{eq:SC1}\\
			 & \qquad\qquad\qquad\qquad\qquad\qquad\qquad\qquad\,+ \int_{V_1\setminus V_{\sqrt{t L}}} p_t(0,z)d\omega(z)  +O(t^\infty),\label{eq:SC2}
\end{flalign}
where in the third equality we performed the change of variable $z\mapsto \delta_{1/\sqrt{t L}}(z)$ in the first integral. Let us discuss the terms appearing in \eqref{eq:SC1} and \eqref{eq:SC2}. First of all, for any $L>1$, by definition of the nilpotentization of $\omega$ given in \eqref{eq:nil_meas}, we get
\begin{equation}
\lim_{t\to 0}\int_{V_1}\hat p_{1/L}(0,z)\omega(\delta_{\sqrt{t L}}(z))dz=\int_{V_1}\hat p_{1/L}(0,z)d\hat\omega(z).
\end{equation}
Moreover, for a fixed $L>1$, the integral of $R$ is bounded as $t\to0$, therefore, using \eqref{eq:rem1_est}, we have:
\begin{equation}
\left|\sqrt{t L} \int_{V_1}R(\sqrt{t L},1/L,z)d\omega(z)\right|\leq C_L \sqrt{t},\qquad\forall\,t\leq 1,
\end{equation}
where $C_L>0$ is a constant depending on the fixed $L$. Secondly, by an upper Gaussian bound for the heat kernel in compact \sr manifold, \cite[Thm.\ 2]{JS-estimates}, we obtain the following estimate for \eqref{eq:SC2}:
\begin{equation}
\label{eq:exp_rem}
\int_{V_1\setminus V_{\sqrt{t L}}} p_t(0,z)d\omega(z)\leq \int_{V_1\setminus V_{\sqrt{t L}}}\frac{C_1e^{-\frac{\beta d_{\mathrm{SR}}^2(0,z)}{t}}}{t^{\mathcal{Q}/2}}d\omega(z),
\end{equation}
where $C_1,\beta>0$ are positive constants. Now, by the Ball-Box Theorem \cite[Thm.\ 2.1]{MR3308372}, the \sr distance function at the origin is comparable with the \sr distance of $\widehat M^x$, denoted by $\hat d$. In particular, there exists a constant $c>0$ such that
\begin{equation}
\label{eq:ball-box_thm}
d_{\mathrm{SR}}^2(0,z)\geq c\, \hat d^2(0,z),\qquad\forall\, z\in V.
\end{equation}
Since in \eqref{eq:exp_rem} we are integrating over the set $V_1\setminus V_{\sqrt{t L}}$ and $\hat d$ is 1-homogeneous with respect to $\delta_\eps$, we conclude that
\begin{equation}
d_{\mathrm{SR}}^2(0,z)\geq c\, t L,\qquad\forall\, z\in V_1\setminus V_{\sqrt{t L}}.
\end{equation}
Therefore, using also \eqref{eq:ball-box_thm}, the term \eqref{eq:exp_rem} can be estimated as follows: 
\begin{multline}
\label{eq:gauss_bd}
\int_{V_1\setminus V_{\sqrt{t L}}} p_t(0,z)d\omega(z)\leq C_1 e^{-\frac{c\beta L}{2}}\int_{V_1}\frac{e^{-\frac{\beta d_{\mathrm{SR}}^2(0,z)}{2t}}}{t^{\mathcal{Q}/2}}d\omega(z) \\
 \leq C_1 e^{-\frac{c\beta L}{2}}\int_{V_1}\frac{e^{-\frac{\beta c\, \hat d^2(0,z)}{2t}}}{t^{\mathcal{Q}/2}}\omega(z)dz\leq \tilde C e^{-\frac{c\beta L}{2}},
\end{multline} 
where $\tilde C>0$ is independent of $t$ and $L$. The last inequality in \eqref{eq:gauss_bd} follows from the fact that, after a change of variable $z\mapsto \delta_{1/\sqrt{t}}(z)$, the integral 
\begin{equation}
\int_{V_1}\frac{e^{-\frac{\beta c\, \hat d^2(0,z)}{2t}}}{t^{\mathcal{Q}/2}}\omega(z)dz<+\infty, 
\end{equation}
is uniformly bounded with respect to $t\in [0,\infty)$. 
%where $\tilde C>0$ is independent of $t$ and $L$. In the last inequality,  in \eqref{eq:gauss_bd} follows from the fact that, by a lower Gaussian bound for the heat kernel \cite[Thm.\ 4]{JS-estimates}, there exists $C_2>0$ such that
%\begin{equation}
%\label{eq:lb_est}
%\int_{V_1}\frac{e^{-\frac{\beta d_{\mathrm{SR}}^2(0,z)}{2t}}}{t^{\mathcal{Q}/2}}d\omega(z)\leq C_2\int_{V_1}p_{\tilde t}(0,z)d\omega(z),
%\end{equation}
%where $\tilde t>0$ depends on $t$. Then, thanks to the weak maximum principle, we conclude that the integral \eqref{eq:lb_est} is bounded uniformly with respect to $t\in [0,\infty)$.

Therefore, for any $L>1$, we obtain the following estimates for the limit of $u$:
\begin{equation}
\label{eq:ineq_lim}
\begin{split}
\limsup_{t \to 0} u(t,x) & \leq \int_{V_1}\hat p_{1/L}(0,z)d\hat\omega(z) + \tilde C e^{-\frac{c\beta L}{2}}, \\
\liminf_{t \to 0} u(t,x) & \geq \int_{V_1}\hat p_{1/L}(0,z)d\hat\omega(z) - \tilde C e^{-\frac{c\beta L}{2}}.
\end{split}
\end{equation}
In order to evaluate the limits in \eqref{eq:ineq_lim}, let us firstly notice that, since $\hat p$ enjoys upper and lower Gaussian bounds (see for example \cite[App.\ C]{YHT-2}), reasoning as we did for \eqref{eq:gauss_bd}, we can prove the following:
\begin{equation}
\label{eq:intV_1}
\int_{V_1}\hat p_{1/L}(0,z)d\hat\omega(z)=\int_{\{z_1>0\}}\hat p_{1/L}(0,z)d\hat\omega(z)+O\left(e^{-\beta'L}\right).
\end{equation}
Secondly, thanks to \eqref{eq:hom_prop} for $\hat p$, we have the following parity property
\begin{equation}
\hat p_t(0,z)=\hat p_t(0,\delta_{-1}(z)),\qquad\forall\,t>0,\ z\in\R^n,
\end{equation}
and, by the choice of privileged coordinates, $\delta_{-1}(\{z_1>0\})=\{z_1<0\}$. Thus, using also the stochastic completeness of the nilpotent approximation, we obtain for any $t\geq 0$,
\begin{align}
1 &=\int_{\R^n}\hat p_t(0,z)d\hat\omega(z)=\int_{\{z_1>0\}}\hat p_t(0,z)d\hat\omega(z)+\int_{\{z_1<0\}}\hat p_t(0,z)d\hat\omega(z)\\
 &=2\int_{\{z_1>0\}}\hat p_t(0,z)d\hat\omega(z),
\end{align}
having performed the change of variables $z\mapsto\delta_{-1}(z)$ in the last equality. Hence, the integral in \eqref{eq:intV_1} is
\begin{equation}
\int_{V_1}\hat p_{1/L}(0,z)d\hat\omega(z)=\frac{1}{2}+O\left(e^{-\beta'L}\right).
\end{equation}
Finally, we optimize the inequalities \eqref{eq:ineq_lim} with respect to $L$, taking $L\to\infty$ and concluding the proof. 
\end{proof}

\begin{rmk}
In the non-compact case, if $M$ is globally doubling and supports a global Poincar\'e inequality, the proof above is still valid, cf.\ Theorem \ref{thm:doub_poincare}. Otherwise, a different proof is needed, see \cite[App.\ D]{PhDthesis} for details.  
\end{rmk}

\section{First-order asymptotic expansion of \texorpdfstring{$H_\Omega(t)$}{HOmega(t)}}
\label{sec:first_ord}

In this section, we introduce the technical tools that allow us to prove the first-order asymptotic expansion of the relative heat content starting from Theorem \ref{t:limit_u}. The new ingredient is a definition of an operator $I_\Omega$, which depends on the base set $\Omega$.

\subsection{A mean value lemma}
\label{ssec:meanval}
Define $\deltas\colon M\rightarrow \R$ to be the signed distance function from $\partial\Omega$, i.e.
\begin{equation}
\deltas(x)=\begin{cases}
d_\mathrm{SR}(x,\partial\Omega) &x\in \Omega,\\
-d_\mathrm{SR}(x,\partial\Omega) &x\in M\setminus\Omega,  
\end{cases}
\end{equation}
where $d_\mathrm{SR}(\cdot,\partial\Omega)\colon M\rightarrow [0,+\infty)$ denotes the usual distance function from $\partial\Omega$. Let us introduce the following notation: for any $a,b\in\R$, with $a<b$, we set
\begin{equation}
\Omega_a^b=\{x\in M\mid a<\deltas(x)<b\},
\end{equation}
with the understanding that if $b$ (or $a$) is omitted, it is assumed to be $+\infty$ (or $-\infty$), for example\footnote{Notice that the set $\Omega_{-\infty}^{+\infty}=M$, thus omitting both indexes can create confusion. We will never do that and $\Omega$ will always denote the starting subset of $M$.}
\begin{equation}
\Omega_r=\Omega_r^{+\infty}=\{x\in M\mid r<\deltas(x)\}.
\end{equation}
In the non-characteristic case, \cite[Prop.\ 3.1]{FPR-sing-lapl} can be extended without difficulties to the signed distance function.

\begin{thm}[Double-sided tubular neighborhood]
\label{thm:sr_tub_neigh}
Let $M$ be a \sr manifold and let $\Omega\subset M$ be an open relatively compact subset of $M$  whose boundary is smooth and has no characteristic points. Let $\deltas: M \to \R$ be the signed distance function from $\partial \Omega$.
Then, we have:
\begin{itemize}
\item[i)] $\deltas$ is Lipschitz with respect to the \sr distance and $\|\nabla\deltas\|_g\leq 1$ a.e.;
\item[ii)] there exists $\smallpar>0$ such that $\deltas\colon \Omega_{-\smallpar}^{\smallpar}\to\R$ is smooth;
\item[iii)] there exists a smooth diffeomorphism $G: (-\smallpar,\smallpar)\times \partial\Omega \to \Omega_{-\smallpar}^{\smallpar}$, such that
\begin{equation}
\label{tub_neig_main_prop}
\deltas(G(t,y))=t\quad\text{and}\quad G_*\partial_t=\nabla\deltas,\qquad \forall\,(t,y)\in (-\smallpar,\smallpar)\times \partial\Omega.
\end{equation}
Moreover, $\|\nabla \deltas\|_g\equiv 1$ on $\Omega_{-\smallpar}^{\smallpar}$.
\end{itemize}
\end{thm}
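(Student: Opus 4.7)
The plan is to establish (i) by direct metric arguments and to derive (ii)--(iii) by gluing two one-sided tubular neighborhoods, with \cite[Prop.~3.1]{FPR-sing-lapl} supplying the $\Omega$-side construction.

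For (i), the unsigned distance $\delta_{\partial\Omega}$ is $1$-Lipschitz for $d_{\mathrm{SR}}$ by the triangle inequality. For the signed version, if $x,y$ lie on the same side of $\partial\Omega$ then $|\deltas(x)-\deltas(y)|=|\delta_{\partial\Omega}(x)-\delta_{\partial\Omega}(y)|\le d_{\mathrm{SR}}(x,y)$; if they lie on opposite sides, then every horizontal curve $\gamma$ from $x$ to $y$ must cross $\partial\Omega$ at some point $z$ by continuity, and $|\deltas(x)-\deltas(y)|=\delta_{\partial\Omega}(x)+\delta_{\partial\Omega}(y)\le \ell(\gamma)$; taking the infimum over $\gamma$ gives the Lipschitz bound in both cases. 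The pointwise control $\|\nabla\deltas\|_g\le 1$ a.e.\ is then the standard sub-Riemannian Rademacher-type estimate for Lipschitz functions: at each point $x$ of horizontal differentiability and each $v\in\D_x$, testing by a smooth horizontal curve with initial velocity $v$ yields $|v(\deltas)(x)|\le\|v\|_g$, which passes to $\nabla\deltas$.

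For (ii) and (iii), apply \cite[Prop.~3.1]{FPR-sing-lapl} to $\Omega$: since $\partial\Omega$ is non-characteristic, there exist $\smallpar>0$ and a diffeomorphism $G_+:(0,\smallpar)\times\partial\Omega\to\Omega_0^{\smallpar}$ satisfying $\delta_{\partial\Omega}\circ G_+=t$ and $(G_+)_*\partial_t=\nabla\delta_{\partial\Omega}$, with $\delta_{\partial\Omega}$ smooth on $\Omega_0^{\smallpar}$. The same proposition applies, after possibly shrinking $\smallpar$, to the open set $M\setminus\overline\Omega$: its boundary is again $\partial\Omega$, and the condition $\D_y\not\subset T_y(\partial\Omega)$ is symmetric with respect to the two sides. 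This produces $G_-:(0,\smallpar)\times\partial\Omega\to\Omega_{-\smallpar}^0$ with analogous properties. Because $\deltas=\delta_{\partial\Omega}$ in $\Omega$ and $\deltas=-\delta_{\partial\Omega}$ in $M\setminus\overline\Omega$, reparametrizing $G_-$ by $t\mapsto -t$ and setting $G(0,y)=y$ glues the two maps into a single candidate $G:(-\smallpar,\smallpar)\times\partial\Omega\to\Omega_{-\smallpar}^{\smallpar}$ with $\deltas(G(t,y))=t$ throughout.

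The main obstacle will be smoothness of $G$ at $t=0$ and of $\deltas$ through $\partial\Omega$, since each one-sided result only gives smoothness on its open half. The plan is to recognize $G$ as the flow of a single smooth horizontal vector field on a full neighborhood of $\partial\Omega$. The non-characteristic assumption implies that the $g$-orthogonal projection onto $\D$ of any smooth field transverse to $\partial\Omega$ is nonvanishing on $\partial\Omega$; normalizing in the $g$-norm and extending smoothly to an open neighborhood $U\supset\partial\Omega$ yields a smooth horizontal vector field $\nu$ with $\|\nu\|_g\equiv 1$ on $\partial\Omega$, pointing into $\Omega$. The map $\tilde G(t,y):=e^{t\nu}(y)$ is then a smooth local diffeomorphism from $(-\smallpar,\smallpar)\times\partial\Omega$ onto a neighborhood of $\partial\Omega$ for $\smallpar$ small. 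The final step is to identify $\tilde G$ with $G_\pm$ on each side: by the Hamilton--Jacobi argument behind \cite[Prop.~3.1]{FPR-sing-lapl}, the integral curves $t\mapsto\tilde G(t,y)$ are short normal sub-Riemannian geodesics transverse to $\partial\Omega$, hence length-minimizing to $\partial\Omega$ for small $|t|$, which forces $\delta_{\partial\Omega}(\tilde G(t,y))=|t|$ and thus $\tilde G=G$ on the common domain. It follows that $\nu=\nabla\deltas$ on $\Omega_{-\smallpar}^{\smallpar}$, giving simultaneously $G_*\partial_t=\nabla\deltas$ and $\|\nabla\deltas\|_g\equiv 1$ there.
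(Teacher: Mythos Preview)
The paper does not give a detailed proof of this theorem: it simply states that \cite[Prop.~3.1]{FPR-sing-lapl} ``can be extended without difficulties to the signed distance function''. Your overall plan---prove (i) by metric arguments, then invoke the one-sided result on both $\Omega$ and $M\setminus\overline\Omega$ and glue---is exactly the intended extension, and part (i) is fine.

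There is, however, a genuine gap in your treatment of smoothness across $\partial\Omega$. You build a smooth horizontal unit field $\nu$ on a neighborhood $U$ of $\partial\Omega$ by extending the boundary horizontal normal, set $\tilde G(t,y)=e^{t\nu}(y)$, and then assert that the curves $t\mapsto\tilde G(t,y)$ are normal sub-Riemannian geodesics, hence length-minimizing to $\partial\Omega$. This last claim is false for a generic extension $\nu$: the integral curves of an arbitrary smooth unit horizontal vector field are unit-speed horizontal curves but not geodesics, and in general $\delta_{\partial\Omega}(e^{t\nu}(y))\neq |t|$ (already in the Riemannian case, flowing a non-parallel unit extension of the normal does not track the level sets of the distance). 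Consequently the identification $\tilde G=G_\pm$ fails, and you cannot conclude $\nu=\nabla\deltas$.

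The correct construction---and the one actually behind \cite[Prop.~3.1]{FPR-sing-lapl}---is to use the sub-Riemannian \emph{normal exponential map} rather than the flow of a vector-field extension. For $y\in\partial\Omega$, let $\lambda(y)\in T_y^*M$ be the unit conormal to $\partial\Omega$ (i.e.\ the annihilator of $T_y(\partial\Omega)$, normalized so that $2H(\lambda(y))=1$ and oriented toward $\Omega$); the non-characteristic hypothesis ensures $\lambda$ is a smooth section along $\partial\Omega$. Set $G(t,y)=\pi\circ e^{t\vec H}(\lambda(y))$, the projection of the Hamiltonian flow. This map is manifestly smooth for $t$ in a full two-sided neighborhood of $0$, and its trajectories \emph{are} normal geodesics meeting $\partial\Omega$ orthogonally, hence locally minimize the distance to $\partial\Omega$. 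It follows that $\deltas(G(t,y))=t$ and $G_*\partial_t=\nabla\deltas$, giving (ii)--(iii) at once without any gluing issue.
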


In particular, the following co-area formula for the signed distance function holds
\begin{equation}
\label{eq:coarea}
\int_{\omprimeNice{r}}v(x)d\omega(x)=\int_0^r\int_{\bdomNice{s}}v(s,y)d\area(y)ds,\qquad\forall\,r\geq 0,
\end{equation}
where $\area$ is the induced measure on $\bdomNice{s}$, namely the positive measure with density $|i_{\nabla\delta}\omega|_{|\bdomNice{s}}$. From \eqref{eq:coarea}, we deduce the \sr mean value lemma, see \cite[Thm.\ 4.1]{MR4223354} for a proof. 

\begin{prop}
\label{prop:mean_val}
Let $M$ be a compact \sr manifold, equipped with a smooth measure $\omega$, let $\Omega\subset M$ be an open subset of $M$ with smooth boundary and no characteristic points and let $\deltas\colon M\to\R$ be the signed distance function from $\partial\Omega$. Fix a smooth function $v\in C^\infty(M)$ and define
\begin{equation}
\label{eq:meanF}
F(r)=\int_{\omNice{r}}v(x)d\omega(x),\qquad\forall\, r\geq 0.
\end{equation} 
Then there exists $\smallpar>0$ such that the function $F$ is smooth on $[0,\smallpar)$ and, for $0\leq r<\smallpar$:
\begin{equation}
\label{eq:2der_gap}
F''(r)=\int_{\omNice{r}}{\deltasr v(x)d\omega(x)}-\int_{\bdomNice{r}}{v(y)\diverg\left(\nu(y)\right)d\area(y)},
\end{equation}
where $\nu$ is the inward-pointing unit normal to $\omNice{r}$, $\area$ is the induced measure on $\bdomNice{r}$.
%Notice that, even in the non-compact case $\int_{\omext{r}}{\deltasr v(x)d\omega(x)}$ is finite by the divergence theorem, the smoothness of $v$ and the fact the $\partial\omext{r}$ is compact.
\end{prop}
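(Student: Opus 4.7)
The plan is to deduce both derivatives of $F$ from the co-area formula \eqref{eq:coarea} combined with the divergence theorem \eqref{eqn:diverg_thm}, exploiting the fact that on the tubular neighborhood $\Omega_{-\smallpar}^{\smallpar}$ given by Theorem \ref{thm:sr_tub_neigh}, $\nabla\deltas$ is a smooth horizontal unit vector field. The key geometric remark is that on each level set $\bdomNice{r}$, the outward sub-Riemannian unit normal is $\nu_r=-\nabla\deltas$, since $\nabla\deltas$ points into $\omNice{r}=\{\deltas>r\}$ and has unit $g$-norm. For the first derivative, since $\omNice{0}\setminus\omNice{r}=\omprimeNice{r}$ up to a null set, \eqref{eq:coarea} yields
\begin{equation*}
F(r)=F(0)-\int_0^r\int_{\bdomNice{s}}v\,d\area\,ds,\qquad r\in[0,\smallpar).
\end{equation*}
The map $s\mapsto\int_{\bdomNice{s}}v\,d\area$ is smooth on $[0,\smallpar)$ because pulling back via $G$ gives a smooth family of densities on the fixed compact manifold $\partial\Omega$. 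Hence $F\in C^\infty([0,\smallpar))$ and $F'(r)=-\int_{\bdomNice{r}}v\,d\area$.

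For the second derivative, I would pick a cutoff $\chi\in C^\infty(M)$ equal to $1$ on $\overline{\Omega_{-\smallpar/2}^{\smallpar/2}}$ and supported in $\Omega_{-\smallpar}^{\smallpar}$, and apply \eqref{eqn:diverg_thm} with function $v$ and field $\chi\nabla\deltas$ on $\omNice{r}$ for $r\in[0,\smallpar/2)$. Since $\chi\equiv 1$ in a neighborhood of $\bdomNice{r}$ and $g(\nabla\deltas,\nu_r)=-1$ there,
\begin{equation*}
-\int_{\bdomNice{r}}v\,d\area=\int_{\omNice{r}}\diverg(v\chi\nabla\deltas)\,d\omega.
\end{equation*}
Differentiating the right-hand side via the co-area formula as in the first step, and using that $\chi\equiv 1$, $\nabla\chi=0$ near $\bdomNice{r}$, gives
\begin{equation*}
F''(r)=-\int_{\bdomNice{r}}\bigl(v\,\deltasr\deltas+g(\nabla v,\nabla\deltas)\bigr)\,d\area.
\end{equation*}
To recognize this as \eqref{eq:2der_gap}, I would apply \eqref{eqn:diverg_thm} once more to the globally defined horizontal field $\nabla v$ on $\omNice{r}$, which gives
\begin{equation*}
\int_{\bdomNice{r}}g(\nabla v,\nabla\deltas)\,d\area=-\int_{\omNice{r}}\deltasr v\,d\omega,
\end{equation*}
and combine this with the pointwise identity $\deltasr\deltas=-\diverg(\nu_r)$ on $\bdomNice{r}$ to obtain the claimed formula.

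The main technical point is ensuring all these manipulations remain valid \emph{up to $r=0$}: the smoothness of $F$ on the closed-at-the-left interval $[0,\smallpar)$, the smooth $r$-dependence of $\int_{\bdomNice{r}}v\,d\area$, and the identification $\nu_r=-\nabla\deltas$ on $\bdomNice{r}$. All three rest squarely on the non-characteristic assumption through Theorem \ref{thm:sr_tub_neigh}, which produces the double-sided tubular diffeomorphism $G$ and guarantees that $\nabla\deltas$ is smooth, horizontal, and of unit $g$-norm on the entire tubular neighborhood, including at $\partial\Omega$ itself.
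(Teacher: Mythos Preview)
Your proof is correct and follows essentially the same route as the paper's (referenced) argument: the remark immediately after the proposition states that the proof in \cite{MR4223354} rests precisely on the co-area formula \eqref{eq:coarea} and the divergence theorem \eqref{eqn:diverg_thm}, which is exactly the combination you use, including the cutoff trick to extend $\nabla\deltas$ globally. One cosmetic point: your cutoff $\chi$ is identically $1$ only on $\overline{\Omega_{-\smallpar/2}^{\smallpar/2}}$, so strictly speaking your computation yields \eqref{eq:2der_gap} for $r\in[0,\smallpar/2)$ rather than $[0,\smallpar)$; this is harmless since $\smallpar$ is not fixed in advance.
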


%If we assume $M$ to be complete and one among $\Omega$ and its complement to be compact then Proposition \ref{prop:mean_value} holds for any $v\in C^\infty(M)\cap L^1(M,\omega)$.
%Notice that there exists open sets with compact boundary in complete manifolds such that neither $\Omega$ nor its complement is compact (e.g. $M=$cylinder, $\partial\Omega=$closed curve).
%If $\omNice{r}$ is a relatively compact subset of $M$, we do as usual. If not, notice that $\omprimeNice{r}$ is compact for any $r\leq\smallpar$, indeed 
%\begin{equation}
%\omprimeNice{\smallpar}=\bigcup_{0\leq r \leq\smallpar}\bdomNice{r}=\bigcup_{0\leq r \leq\smallpar}\{\deltas=r\},
%\end{equation}
%where $\delta\colon M\rightarrow\R$ is the signed distance function from $\partial\Omega$. Since $\partial\Omega$ is compact, any level set of $\delta$ is compact as well, indeed it is closed by continuity of $\delta$ and bounded. Therefore, for any $r\leq\smallpar$ we can write:
%\begin{equation}
%F(r) =\int_{\Omega}v(x)d\omega(x)-\int_{\omprimeNice{r}}v(x)d\omega(x),
%\end{equation}
%where now the set $\omprimeNice{r}$ is compact. At this point, one has to reproduce the usual proof, being careful about
% 1) $\partial\omprimeNice{r}=\partial\Omega\cup\bdomNice{r}$
% 2) the sign change in the outward-unit normal when using the divergence theorem.

\begin{rmk}
If $v\in C_c^\infty(M)$, then neither $M$ nor $\Omega$ is required to be compact for Proposition \ref{prop:mean_val} to be true, indeed its proof relies on \eqref{eq:coarea}, which continues to hold, and the divergence theorem \eqref{eqn:diverg_thm}, which applies if $\mathrm{supp}(v)$ is compact. Moreover, we remark that $\nu_r$ is equal to $\nabla\delta$ up to sign. We prefer to keep $\nu_r$ in \eqref{eq:meanF}, since we are going to apply it when the integral is performed over $\omNice{r}$ or its complement.  
\end{rmk}

If we choose the function $v$ in the definition of $F$ to be $1-u(t,x)$, where $u(t,\cdot)=e^{t\Delta}\mathds{1}_{\Omega}$, then, $F$ satisfies a non-homogeneous one-dimensional heat equation.

\begin{cor}
\label{cor:nonhom_heat_old}
Under the hypotheses of Proposition \ref{prop:mean_val}, the function
\begin{equation}
\label{eq:defn_F}
F(t,r)=\int_{\omNice{r}}\left(1-u(t,x)\right)d\omega(x),\qquad\forall\,t>0,\quad r\geq 0,
\end{equation}
where $u(t,x)=e^{t\Delta}\mathds{1}_{\Omega}(x)$, satisfies the following non-homogeneous one-dimensional heat equation:
\begin{equation}
\label{eq:nonhom_heat}
(\partial_t-\partial_r^2)F(t,r)=\int_{\bdomNice{r}}{\left(1-u(t,\cdot)\right)\diverg(\nu) d\area}, \qquad t > 0,\quad r\in[0,\smallpar).
\end{equation}
Here $\nu$ is the inward-pointing unit normal to $\omNice{r}$, $\area$ is the induced measure on $\bdomNice{r}$.
\end{cor}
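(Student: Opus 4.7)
The plan is to apply the mean value lemma (Proposition \ref{prop:mean_val}) at each fixed time $t>0$ with the choice $v(x)=1-u(t,x)$, and then to convert the bulk term produced by the sub-Laplacian into a time derivative using the heat equation.

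First I would check that the function $v_t(x):=1-u(t,x)$ is smooth on $M$ for every $t>0$. This follows from hypoellipticity of $\partial_t-\Delta$, which guarantees that the solution $u$ of the Cauchy problem \eqref{eq:cauchy_prob} is smooth on $(0,\infty)\times M$. In particular $v_t\in C^\infty(M)$, so Proposition \ref{prop:mean_val} is applicable and yields, for every $r\in[0,\smallpar)$,
\begin{equation}
\partial_r^2 F(t,r)=\int_{\omNice{r}}\Delta(1-u(t,x))\,d\omega(x)+\int_{\bdomNice{r}}(1-u(t,y))\diverg(\nu_r(y))\,d\area(y).
\end{equation}
Since $u$ solves the heat equation on $(0,\infty)\times M$, we have $\Delta(1-u(t,\cdot))=-\Delta u(t,\cdot)=-\partial_t u(t,\cdot)$, so the bulk term rewrites as $-\int_{\omNice{r}}\partial_t u(t,x)\,d\omega(x)$.

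Next I would differentiate $F$ in $t$. Since $\omNice{r}$ is relatively compact, $(t,x)\mapsto\partial_t u(t,x)$ is continuous, and $u$ is uniformly bounded by the weak maximum principle \eqref{eq:wmax_prin}, dominated convergence allows us to commute $\partial_t$ with the spatial integral:
\begin{equation}
\partial_t F(t,r)=-\int_{\omNice{r}}\partial_t u(t,x)\,d\omega(x).
\end{equation}
Subtracting, the bulk contributions cancel exactly and we obtain
\begin{equation}
(\partial_t-\partial_r^2)F(t,r)=-\int_{\bdomNice{r}}(1-u(t,y))\diverg(\nu_r(y))\,d\area(y),
\end{equation}
which is the desired identity.

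The only substantive point is the interchange of $\partial_t$ and the integral over $\omNice{r}$, and the smoothness in $r$ required to apply Proposition \ref{prop:mean_val} uniformly in $t$ on compact $t$-intervals; both are standard consequences of the parabolic regularity of $u$ and the smoothness of the tubular structure on $\omprimeNice{\smallpar}$ provided by Theorem \ref{thm:sr_tub_neigh}. No deeper ingredient is needed beyond Proposition \ref{prop:mean_val} and the heat equation itself.
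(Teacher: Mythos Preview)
Your argument is correct and is exactly the approach the paper intends: the text preceding the corollary explicitly says to take $v=1-u(t,\cdot)$ in Proposition \ref{prop:mean_val}, after which the heat equation converts the bulk $\Delta$-term into $\partial_t F$, yielding \eqref{eq:nonhom_heat}. The paper gives no further proof, treating the result as immediate from Proposition \ref{prop:mean_val}, so your write-up simply makes the implicit steps explicit.
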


Corollary \ref{cor:nonhom_heat_old} holds only for $r\leq\smallpar$, however we would like to extend it to the \emph{whole} positive half-line, in order to apply a Duhamel's principle. This can be done up to an error which is exponentially small.

\subsection{Localization principle}

\begin{prop}
\label{prop:easy_loc}
Let $M$ be a compact \sr manifold, equipped with a smooth measure $\omega$, and let $\Omega\subset M$ be an open subset of $M$, with smooth boundary. Moreover, let $K\subset M$ be a closed set such that $K\cap\partial\Omega=\emptyset$. Then
\begin{equation}
\mathds{1}_\Omega(x)-u(t,x)=O(t^\infty), \qquad\text{uniformly for }x\in K,
\end{equation}
where $u(t,x)=e^{t\Delta}\mathds{1}_{\Omega}(x)$.
\end{prop}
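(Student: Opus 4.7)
The strategy is to reduce everything to off-diagonal Gaussian estimates for the heat kernel $p_t$, exploiting the fact that $K$ and $\partial\Omega$ are disjoint closed subsets of the compact manifold $M$.

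First, since $K \cap \partial\Omega = \emptyset$, and both sets are closed in the compact manifold $M$, I would set
\begin{equation}
\eta := d_{\mathrm{SR}}(K,\partial\Omega) > 0
\end{equation}
and decompose $K = K_+ \sqcup K_-$ with $K_+ := K \cap \Omega$ and $K_- := K \cap (M \setminus \bar\Omega)$; these are disjoint precisely because $K$ avoids $\partial\Omega$. The key geometric observation is that, since $d_{\mathrm{SR}}$ induces the manifold topology, any continuous curve from a point in $\Omega$ to a point outside $\bar\Omega$ must cross $\partial\Omega$, and length-minimizing curves are continuous. Hence $d_{\mathrm{SR}}(x,y) \ge \eta$ whenever $x \in K_+$ and $y \in M\setminus\Omega$, and similarly $d_{\mathrm{SR}}(x,y) \ge \eta$ whenever $x \in K_-$ and $y \in \Omega$.

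Next, I would use stochastic completeness of the heat semigroup on the compact manifold $M$, namely $\int_M p_t(x,y) d\omega(y) = 1$ for all $x \in M$ and $t>0$. For $x \in K_+$ one has $\mathds{1}_\Omega(x) = 1$, so
\begin{equation}
\mathds{1}_\Omega(x) - u(t,x) = \int_{M\setminus\Omega} p_t(x,y)\, d\omega(y),
\end{equation}
while for $x \in K_-$ one has $\mathds{1}_\Omega(x) = 0$, so
\begin{equation}
\mathds{1}_\Omega(x) - u(t,x) = -\int_\Omega p_t(x,y)\, d\omega(y).
\end{equation}
In both cases the integrand is supported where $d_{\mathrm{SR}}(x,y) \ge \eta$.

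Finally, I would invoke the upper Gaussian estimate of Jerison--S\'anchez-Calle \cite[Thm.\ 2]{JS-estimates} (already cited in Theorem \ref{t:limit_u}), which gives constants $C,\beta>0$ such that $p_t(x,y) \le C t^{-\mathcal{Q}/2} e^{-\beta d_{\mathrm{SR}}^2(x,y)/t}$ uniformly on $M\times M$. Combined with $\omega(M) < \infty$, this yields
\begin{equation}
\left|\mathds{1}_\Omega(x) - u(t,x)\right| \le \frac{C\,\omega(M)}{t^{\mathcal{Q}/2}}\,e^{-\beta\eta^2/t}, \qquad \forall\,x \in K,\ t > 0,
\end{equation}
which is $O(t^\infty)$ as $t \to 0$, uniformly in $x \in K$. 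There is no real obstacle here: everything follows once one recognizes that stochastic completeness lets the indicator be rewritten as a heat-kernel integral away from $x$, and that the uniform positive separation $\eta$ converts the Gaussian tail into exponential smallness uniform on $K$.
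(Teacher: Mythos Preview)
Your proof is correct and follows essentially the same route as the paper: decompose $K$ into its parts inside and outside $\Omega$, use stochastic completeness to rewrite $\mathds{1}_\Omega(x)-u(t,x)$ as an integral of $p_t(x,\cdot)$ over a region separated from $x$ by distance at least $\eta$, and apply an off-diagonal heat kernel bound. The only cosmetic difference is that the paper invokes the off-diagonal estimate \cite[Prop.~3]{JS-estimates}, which directly gives $p_t(x,y)\le C_a e^{-c_a/t}$ for $d_{\mathrm{SR}}(x,y)\ge a$ (no polynomial prefactor), whereas you use the full Gaussian upper bound \cite[Thm.~2]{JS-estimates} with the $t^{-\mathcal{Q}/2}$ factor; both yield $O(t^\infty)$, and the paper's choice simply avoids the mild nuisance that $\mathcal{Q}$ may vary with the point in the non-equiregular case.
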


\begin{proof}
The statement is a direct consequence of the off-diagonal estimate for the heat kernel in compact \sr manifold (see \cite[Prop.\ 3]{JS-estimates}):
\begin{equation}\label{prop3Jerison}
p_t(x,y) \leq C_a e^{-c_a/t}, \qquad \forall\,x,y \text{ with } d(x,y) \geq a,\quad t< 1,
\end{equation}
for suitable constants $C_a,c_a>0$, depending only on $a$. Now, since $K\cap\partial\Omega=\emptyset$, we can write $K$ as a disjoint union
\begin{equation}
K=K_1\sqcup K_2\qquad\text{with }K_1\subset\Omega,\quad K_2\subset M\setminus\Omega.
\end{equation}
At this point, for $i=1,2$, set $a_i= d_\mathrm{SR}(K_i,\partial\Omega)>0$ by hypothesis, and let $x\in K_1$. Then, using the stochastic completeness of $M$, we have:
\begin{equation}
\label{eq:loc_estimate}
\left|\mathds{1}_\Omega(x)-u(t,x)\right|=1-u(t,x)=\int_{M\setminus\Omega}p_t(x,y)d\omega(y)\leq C_1 e^{-c_1/t}\omega(M\setminus\Omega),
\end{equation}
which is exponentially decaying, uniformly in $K_1$. Analogously, if $x\in K_2$, we have
\begin{equation}
\left|\mathds{1}_\Omega(x)-u(t,x)\right| = u(t,x)=\int_Mp_t(x,y)\mathds{1}_\Omega(y) d\omega(y)=\int_\Omega p_t(x,y)d\omega(y)\leq C_2 e^{-c_2/t}\omega(\Omega),
\end{equation} 
uniformly in $K_2$.
\end{proof}

\begin{rmk}
In the non-compact case, Proposition \ref{prop:easy_loc} may fail. Indeed, on the one hand the off-diagonal estimate \eqref{prop3Jerison} is not always available, on the other hand the measure of $M\setminus\Omega$ appearing in \eqref{eq:loc_estimate} is infinite. Under additional assumption on $M$, we are able to recover a localization principle, see Section \ref{sec:loc_noncomp}.
\end{rmk}

Let $M$ be compact. Thanks to Proposition \ref{prop:easy_loc}, we can extend the function $F$ defined in \eqref{eq:defn_F}, to a solution to a non-homogeneous heat equation such as \eqref{eq:nonhom_heat} on the whole half-line. More precisely, let $\phi,\eta\in C_c^\infty(M)$ such that
\begin{equation}
\label{eq:cutoff}
\phi + \eta  \equiv 1, \qquad \mathrm{supp}(\phi) \subset \Omega_{-\smallpar}^{\smallpar}, \qquad \mathrm{supp}(\eta) \subset \omext{\smallpar/2}\cup\omNice{\smallpar/2},
\end{equation}
where $\smallpar$ is defined in Proposition \ref{prop:mean_val}. We have then, for $r\in[0,\smallpar)$,
\begin{align}
F(t,r) & =\int_{\omNice{r}}\left(1- u(t,x)\right)\phi(x) d\omega(x) +  \int_{\omNice{r}}\left(1- u(t,x)\right)\eta(x) d\omega(x) \\
& = \int_{\omNice{r}}\left(1- u(t,x)\right)\phi(x) d\omega(x) +  \int_{\mathrm{supp}(\eta)\cap\omNice{r}} \left(1- u(t,x)\right)\eta(x) d\omega(x) \\
& = \int_{\omNice{r}} \left(1- u(t,x)\right)\phi(x) d\omega(x) + O(t^\infty), \label{eq:split}
\end{align}
where we used Proposition \ref{prop:easy_loc} to deal with the second term, having set $K=\mathrm{supp}(\eta)\cap\omNice{r}$. For this reason, we may focus on the first term in \eqref{eq:split}. 

\begin{defn}
\label{def:ILambda_om}
For all $t> 0$ and $r\geq 0$, we define the operators $I_\Omega,\Lambda_\Omega:\extfunsp\to C^\infty((0,\infty)\times [0,\infty))$, associated with $\Omega$, by
\begin{align}
I_\Omega\phi(t,r) &= \int_{\omNice{r}}{\left(1- u(t,x)\right)\phi(x)d\omega(x)},\\
\Lambda_\Omega\phi(t,r) &=-\partial_rI_\Omega\phi(t,r)=-\int_{\bdomNice{r}}{\left(1- u(t,y)\right)\phi(y)d\sigma(y)},
\end{align}
for any $\phi\in \extfunsp$, and where $\sigma$ denotes the induced measure on $\bdomNice{r}$ and $u(t,\cdot)=e^{t\Delta}\mathds{1}_\Omega(\cdot)$.
\end{defn}

\begin{rmk}
We stress that, for every $\phi\in\extfunsp$, $I_\Omega\phi$, $\Lambda_\Omega\phi$ are indeed smooth in both variables thanks to the choice of the parameter $\smallpar>0$ as in Proposition \ref{prop:mean_val}, together with the smoothness of the solution to the heat equation. Moreover, $\Lambda_\Omega\phi$ is compactly supported in the $r$-variable. 
\end{rmk}

Thanks to the localization principle, we can improve Corollary \ref{cor:nonhom_heat_old}, obtaining a better result for $I_\Omega\phi(t,r)$. 
\begin{lem}
\label{lem:LI_om}
Let $L= \partial_t-\partial_{r}^2$ be the one-dimensional heat operator. Then, for any $\phi\in\extfunsp$, 
\begin{equation}
\label{eq:LI_Omega}
L\left(I_\Omega\phi(t,r)\right)=I_\Omega\Delta\phi(t,r)+\Lambda_\Omega N_\Omega\phi(t,r),\qquad\forall \,t>0,\quad r\geq 0,
\end{equation} 
where $N_\Omega$ is the operator defined by:
\begin{equation}
\label{eq:sr_N}
N_\Omega\phi= 2g\left(\nabla\phi,\nu\right)+\phi\,\diverg(\nu),\qquad\forall\,\phi\in \extfunsp,
\end{equation}
and $\nu$ is the inward-pointing unit normal to $\Omega$.
\end{lem}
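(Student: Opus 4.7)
The plan is to compute $\partial_t I_\Omega\phi$ and $\partial_r^2 I_\Omega\phi$ separately for $r\in[0,\smallpar)$, inside the tubular neighborhood $\Omega_{-\smallpar}^{\smallpar}$ furnished by Theorem \ref{thm:sr_tub_neigh}, and combine them. The compact support of $\phi$ in $\Omega_{-\smallpar}^{\smallpar}$, together with the smoothness of $\deltas$ and the fact $\|\nabla\deltas\|_g\equiv 1$ there, guarantees that the only boundary contributions in any integration by parts on $\Omega_r$ come from $\partial\Omega_r$; in particular $\nu_r = -\nabla\deltas$ is horizontal, so the identity $\nu_r(f) = g(\nabla f,\nu_r)$ holds for smooth $f$.

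The time derivative is immediate from the heat equation $\partial_t u = \Delta u$, namely $\partial_t I_\Omega\phi = -\int_{\Omega_r}\phi\,\Delta u\, d\omega$. For the radial derivative, I would apply Proposition \ref{prop:mean_val} to the smooth, compactly supported function $v(\cdot)=(1-u(t,\cdot))\phi(\cdot)$, which gives
\begin{equation}
\partial_r^2 I_\Omega\phi(t,r) = \int_{\Omega_r}\Delta\bigl[(1-u)\phi\bigr]d\omega + \int_{\partial\Omega_r}(1-u)\phi\,\diverg(\nu_r)d\sigma.
\end{equation}
Expanding the sub-Laplacian of the product via Leibniz, $\Delta[(1-u)\phi] = (1-u)\Delta\phi - 2g(\nabla u,\nabla\phi) - \phi\,\Delta u$, the $\phi\,\Delta u$ contribution cancels against $\partial_t I_\Omega\phi$ when $LI_\Omega\phi$ is formed. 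This cancellation is the key simplification: it ensures that no normal derivative of $u$ survives in the final formula.

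What remains is $-I_\Omega\Delta\phi + 2\int_{\Omega_r} g(\nabla u,\nabla\phi)d\omega - \int_{\partial\Omega_r}(1-u)\phi\,\diverg(\nu_r)d\sigma$, and one further application of the divergence theorem on $\Omega_r$ to the horizontal vector field $(1-u)\nabla\phi$ rewrites the bulk gradient term as $I_\Omega\Delta\phi - \int_{\partial\Omega_r}(1-u)g(\nabla\phi,\nu_r)d\sigma$. Substituting and collecting the boundary integrals produces precisely the operator $N_r\phi = 2g(\nabla\phi,\nu_r) + \phi\,\diverg(\nu_r)$ of \eqref{eq:sr_N}, yielding $LI_\Omega\phi = I_\Omega\Delta\phi - \Lambda_\Omega N_r\phi$. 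The computation is essentially bookkeeping once Proposition \ref{prop:mean_val} and the heat equation are in place; the only non-trivial input from the sub-Riemannian geometry is the smoothness of $\deltas$ near $\partial\Omega$, which rests on the non-characteristic hypothesis via Theorem \ref{thm:sr_tub_neigh}.
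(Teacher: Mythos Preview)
Your proof is correct and follows essentially the same route as the paper, which (in the proof of Lemma~\ref{lem:iter_LI_om}) simply refers to Proposition~\ref{prop:mean_val} and the divergence theorem; you have carried out precisely those computations in detail, with the product-rule expansion of $\Delta[(1-u)\phi]$ and one further integration by parts to convert the bulk term $\int_{\Omega_r}g(\nabla u,\nabla\phi)\,d\omega$ into boundary data. The only cosmetic point is that the identity for $r\geq \smallpar$ holds trivially since $\phi$ vanishes on $\Omega_r$, so both sides are zero there.
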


\subsection{Duhamel's principle for \texorpdfstring{$I_\Omega\phi$}{IOmega phi}}

We recall for the convenience of the reader a one-dimensional version of the Duhamel's principle, see \cite[Lem.\ 5.4]{MR4223354}.

\begin{lem}[Duhamel's principle]
\label{lem:neg_duham}
Let $f\in C((0,\infty)\times [0,\infty))$, $v_0,v_1\in C([0,\infty))$, such that $f(t,\cdot)$ and $v_0$ are compactly supported and assume that
\begin{equation}
\label{eq:lim_source}
\exists\lim_{t\to 0} f(t,r),\qquad\forall r\geq 0.	
\end{equation}
Consider the non-homogeneous heat equation on the half-line:
\begin{equation}
\begin{aligned}\label{eq:neum_prob}
Lv(t,r) & = f(t,r), & \qquad & \text{for }t>0,\ r>0, \\
v(0,r)&= v_0(r),&  \qquad  & \text{for }r>0,\\
\partial_rv(t,0)&= v_1(t),& \qquad  & \text{for }t>0,
\end{aligned}
\end{equation}
where $L=\partial_t-\partial_{r}^2$. Then, for $t>0$ and $r\geq 0$, the solution to \eqref{eq:neum_prob} is given by
\begin{multline}
\label{eq:duham_prin}
v(t,r)=\int_0^\infty e(t,r,s)v_0(s)ds +\int_0^t\int_0^\infty e(t-\tau,r,s)f(\tau,s)ds \de\tau \\
-\int_0^te(t-\tau,r,0)v_1(\tau)d\tau,
\end{multline}
where $e(t,r,s)$ is the Neumann heat kernel on the half-line, that is
\begin{equation}\label{eq:neumanheat}
e(t,r,s) = \frac{1}{\sqrt{4\pi t}}\left(e^{-(r-s)^2/4t}+e^{-(r+s)^2/4t}\right).
\end{equation}
\end{lem}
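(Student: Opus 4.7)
The plan is to verify the representation formula \eqref{eq:duham_prin} by decomposing the right-hand side as $v=A+B+C$, where
\begin{equation*}
A(t,r)=\int_0^\infty e(t,r,s)v_0(s)\,ds,\qquad B(t,r)=\int_0^t\!\!\int_0^\infty e(t-\tau,r,s)f(\tau,s)\,ds\,d\tau,
\end{equation*}
and $C(t,r)=-\int_0^t e(t-\tau,r,0)v_1(\tau)\,d\tau$, and showing that each summand solves a decoupled sub-problem. Linearity then yields a solution of \eqref{eq:neum_prob}, and uniqueness in a reasonable class (via a standard energy estimate or maximum principle on the half-line) identifies it with the unique solution.

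The Neumann kernel \eqref{eq:neumanheat} enjoys three properties that drive the whole argument: $Le(\cdot,\cdot,s)=0$ on $(0,\infty)\times(0,\infty)$ (each Gaussian being a solution of the heat equation on $\R$); $\partial_r e(t,r,s)|_{r=0}=0$ (the two Gaussians cancel); and $e(t,r,\cdot)$ is an approximate identity on $[0,\infty)$ as $t\to 0^+$. Differentiating under the integral sign gives $LA=0$ and $\partial_r A(t,0)=0$, with $A(0,\cdot)=v_0$ by the approximate-identity property. For $B$ one proceeds with the standard Duhamel trick: split $\int_0^t=\int_0^{t-\epsilon}+\int_{t-\epsilon}^t$, use $\partial_t e=\partial_r^2 e$ in the first piece, and extract $f(t,r)$ as $\epsilon\to 0^+$ from the delta-like behavior of $e(\epsilon,r,\cdot)$. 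Assumption \eqref{eq:lim_source} is precisely what makes this limit well defined. Thus $LB=f$, $B(0,\cdot)=0$, and $\partial_r B(t,0)=0$.

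The delicate step, which is also the main obstacle, is the verification of $\partial_r C(t,0^+)=v_1(t)$. Since $\partial_r e(t,r,0)=-\tfrac{r}{2t\sqrt{\pi t}}\,e^{-r^2/4t}$ vanishes identically at $r=0$, one cannot exchange limit and integral to read off the boundary value. Instead, the substitution $u=r^2/(4(t-\tau))$ transforms
\begin{equation*}
\partial_r C(t,r)=\int_0^t\frac{r}{2(t-\tau)^{3/2}\sqrt{\pi}}\,e^{-r^2/(4(t-\tau))}\,v_1(\tau)\,d\tau
\end{equation*}
into
\begin{equation*}
\partial_r C(t,r)=\frac{1}{\sqrt{\pi}}\int_{r^2/(4t)}^{\infty} u^{-1/2}e^{-u}\,v_1\!\bigl(t-\tfrac{r^2}{4u}\bigr)\,du,
\end{equation*}
and dominated convergence together with continuity of $v_1$ at $t$ gives $\partial_r C(t,0^+)=\tfrac{v_1(t)}{\sqrt{\pi}}\int_0^\infty u^{-1/2}e^{-u}\,du=v_1(t)$. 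The remaining properties $LC=0$ on $r>0$ and $C(0,r)=0$ are immediate from $Le(\cdot,r,0)=0$ and from the fact that $C$ is a time convolution starting at $0$. A conceptually cleaner, equivalent route would be via even reflection of $v$ to all of $\R$: the Neumann data produce a distributional source $-2v_1(t)\delta_0(r)$, and convolving with the Euclidean heat kernel and symmetrizing reproduces the boundary term in \eqref{eq:duham_prin} with the correct sign, confirming both the formula and the interpretation of the delicate computation above as a jump relation for a single-layer heat potential.
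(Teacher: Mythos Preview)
Your proof is correct and follows the standard verification route for Duhamel-type representation formulas: decompose the right-hand side into three pieces handling, respectively, the initial datum, the source, and the Neumann boundary datum, and check each sub-problem separately. The computation of $\partial_r C(t,0^+)$ via the substitution $u=r^2/(4(t-\tau))$ is the right way to recover the jump relation for the single-layer potential, and your remark about the even reflection is a clean conceptual justification of the sign.

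There is, however, nothing to compare against in the paper itself: the authors do not prove this lemma but merely recall it from their earlier work \cite[Lem.~5.4]{MR4223354}. Your argument is precisely the kind of direct verification one expects for such a formula, and it stands on its own.
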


Finally, we apply Lemma \ref{lem:neg_duham} to obtain an asymptotic equality for $I_\Omega\phi$. The main difference with the result of \cite[Thm.\ 5.6]{MR4223354} is that the former will not be a true first-order asymptotic expansion.  

\begin{cor}
\label{cor:1st_exp}
Let $M$ be a compact \sr manifold, equipped with a smooth measure $\omega$, and let $\Omega\subset M$ be an open subset whose boundary is smooth and has no characteristic points. Then, for any function $\phi\in\extfunsp$, 
\begin{equation}
\label{eq:1st_exp}
I_\Omega\phi(t,0)=\frac{1}{\sqrt\pi}\int_0^t \int_{\partial\Omega}\left(1-u(\tau,y)\right)\phi(y)d\area(y)(t-\tau)^{-1/2}d\tau+O(t),
\end{equation}
as $t\to 0$, where $u(t,\cdot)=e^{t\Delta}\mathds{1}_\Omega(\cdot)$.
\end{cor}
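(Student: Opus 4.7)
The plan is to apply the one-dimensional Duhamel formula of Lemma~\ref{lem:neg_duham} to the non-homogeneous heat equation on the half-line satisfied by the function $r\mapsto I_\Omega\phi(t,r)$, cf.\ Corollary~\ref{cor:nonhom_heat}, and then to specialize the resulting identity to $r=0$. I identify the three pieces of Duhamel data as follows: the initial datum vanishes, since for every $r\geq 0$ one has
\[
I_\Omega\phi(0,r)=\int_{\omNice{r}}(1-\mathds{1}_\Omega)\,\phi\,d\omega=0,
\]
because $\omNice{r}\subset\Omega$; the source is $f(t,r)=I_\Omega\Delta\phi(t,r)-\Lambda_\Omega N_r\phi(t,r)$, by Corollary~\ref{cor:nonhom_heat}; and the Neumann datum at the origin, obtained from the co-area formula \eqref{eq:coarea} applied to the compactly supported integrand $(1-u(t,\cdot))\phi$, reads
\[
v_1(t):=\partial_r I_\Omega\phi(t,0)=-\int_{\partial\Omega}(1-u(t,y))\phi(y)\,d\sigma(y).
\]

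Plugging these data into \eqref{eq:duham_prin}, setting $r=0$, and using the explicit value $e(t-\tau,0,0)=(\pi(t-\tau))^{-1/2}$, I expect the boundary contribution to produce exactly the claimed leading term
\[
\frac{1}{\sqrt\pi}\int_0^t(t-\tau)^{-1/2}\int_{\partial\Omega}(1-u(\tau,y))\phi(y)\,d\sigma(y)\,d\tau,
\]
while the remaining source contribution
\[
R(t):=\int_0^t\!\!\int_0^\infty e(t-\tau,0,s)\bigl[I_\Omega\Delta\phi(\tau,s)-\Lambda_\Omega N_s\phi(\tau,s)\bigr]\,ds\,d\tau
\]
is what I still have to show to be of order $t$ as $t\to 0$.

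The estimate on $R(t)$ will rest on a uniform $L^\infty$ bound for the source. By the weak maximum principle \eqref{eq:wmax_prin}, $0\le 1-u(\tau,\cdot)\le 1$ on $M$. Since $\phi\in\extfunsp$, the function $\Delta\phi$ is bounded, and the family $\{N_s\phi\}_{s\in(-\smallpar,\smallpar)}$ is uniformly bounded in $C^0(M)$ by the smoothness of the tubular foliation $G\colon(-\smallpar,\smallpar)\times\partial\Omega\to\Omega_{-\smallpar}^{\smallpar}$ supplied by Theorem~\ref{thm:sr_tub_neigh}, which also provides a uniform bound on the induced perimeters $\sigma(\bdomNice{s})$. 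Combining these ingredients with the elementary identity $\int_0^\infty e(t-\tau,0,s)\,ds=1$ valid for every $\tau<t$, I obtain $|R(t)|\le Ct$, which yields \eqref{eq:1st_exp}.

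The only delicate point is the uniform control of $N_s\phi$ and of $\sigma(\bdomNice{s})$ in the parameter $s\in(-\smallpar,\smallpar)$; this is entirely encoded in the regularity of the signed-distance foliation and is already guaranteed by the non-characteristic hypothesis through Theorem~\ref{thm:sr_tub_neigh}. The main conceptual difference with the Dirichlet case treated in \cite{MR4223354} is that here the Neumann datum $v_1(t)$ does \emph{not} vanish; this is precisely why \eqref{eq:1st_exp} is still only an asymptotic identity involving the unknown boundary trace $u(\tau,\cdot)\rvert_{\partial\Omega}$, and why upgrading it to a genuine first-order asymptotic expansion will require the zero-order boundary behavior of $u$ supplied by Theorem~\ref{t:limit_u}.
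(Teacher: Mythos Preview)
Your proof is correct and follows essentially the same approach as the paper: you identify the Neumann problem satisfied by $I_\Omega\phi(t,r)$ via Corollary~\ref{cor:nonhom_heat}, apply Duhamel's formula~\eqref{eq:duham_prin} at $r=0$, and bound the source integral by $Ct$ using the weak maximum principle and $\int_0^\infty e(t-\tau,0,s)\,ds=1$. Your added justification for the uniform bound on the source (via the smooth tubular foliation of Theorem~\ref{thm:sr_tub_neigh} to control $N_s\phi$ and $\sigma(\bdomNice{s})$) makes explicit a point the paper leaves implicit when it simply asserts that the source is ``uniformly bounded by the weak maximum principle''.
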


\begin{proof}
By Lemma \ref{lem:LI_om}, the function $I_\Omega\phi(t,r)$ satisfies the following Neumann problem on the half-line:
\begin{equation}
\begin{aligned}
LI_\Omega\phi(t,r) & = f(t,r), & \qquad & \text{for }t>0,\ r>0, \\
I_\Omega\phi(0,r)&= 0,&  \qquad  & \text{for }r>0,\\
\partial_rI_\Omega\phi(t,0)&= -\Lambda_\Omega\phi(t,0),& \qquad  & \text{for }t>0,
\end{aligned}
\end{equation}
where the source is given by $f(t,r)=I_\Omega\deltasr\phi(t,r)+\Lambda_\Omega N_\Omega\phi(t,r)$. Thus, applying Duhamel's formula \eqref{eq:duham_prin}, we have:
\begin{equation}
I_\Omega\phi(t,0)=\int_0^t\int_0^{+\infty}e(t-\tau,0,s)f(\tau,s)ds \de\tau+\frac{1}{\sqrt{\pi}}\int_0^t \frac{1}{\sqrt{t-\tau}}\Lambda_\Omega\phi(t,0)d\tau.
\end{equation}
Since the source is uniformly bounded by the weak maximum principle \eqref{eq:wmax_prin}, the first integral is a remainder of order $t$, as $t\to 0$, concluding the proof. 
\end{proof}

\begin{rmk}
We mention that a relevant role in the sequel will be played by the operators $I_\Omega$, cf.\ Definition \ref{def:ILambda_om}, associated with either $\Omega$ or its complement $\Omega^c$. 
\end{rmk}

\subsection{First-order asymptotics}

In this section we prove the first-order asymptotic expansion of $H_\Omega(t)$, cf.\ Theorem \ref{t:intro1} at order $1$. We will use Corollary \ref{cor:1st_exp}, for the \emph{inside contribution}:
\begin{equation}
\label{eq:inside}
I\phi(t,r) = \int_{\omNice{r}}{\left(1- u(t,x)\right)\phi(x)d\omega(x)},\qquad\forall\,t>0,\quad r\geq 0,
\end{equation}
for any $\phi\in \extfunsp$, and where $\sigma$ denotes the induced measure on $\bdomNice{r}$ and $u(t,\cdot)=e^{t\Delta}\mathds{1}_\Omega(\cdot)$ is the solution to \eqref{eq:cauchy_prob}. The quantity \eqref{eq:inside} is just Definition \ref{def:ILambda_om}, applied with base set $\Omega\subset M$.

\begin{thm}
\label{thm:1st_exp_H}
Let $M$ be a compact \sr manifold, equipped with a smooth measure $\omega$, and let $\Omega\subset M$ be an open subset whose boundary is smooth and has no characteristic points. Then,
\begin{equation}
\label{eq:1st_exp_H}
H_\Omega(t)=\omega(\Omega)-\frac{1}{\sqrt\pi}\area(\partial\Omega) t^{1/2}+o(t^{1/2}), \qquad\text{as }t\to 0.
\end{equation}
\end{thm}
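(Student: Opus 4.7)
The plan is to combine the localization principle with the Duhamel representation and the zero-order boundary asymptotics already established in Theorem \ref{t:limit_u}.

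First, I would pick cutoff functions $\phi, \eta \in C_c^\infty(M)$ as in \eqref{eq:cutoff}, so that $\phi+\eta\equiv 1$ in a neighborhood of $\overline\Omega$ and $\phi\equiv 1$ in a thinner tubular neighborhood of $\partial\Omega$. Splitting
\begin{equation*}
\omega(\Omega)-H_\Omega(t)=\int_\Omega(1-u(t,x))\phi(x)\,d\omega(x)+\int_\Omega(1-u(t,x))\eta(x)\,d\omega(x),
\end{equation*}
the second integrand is supported on a closed set $K\subset\Omega$ with $K\cap\partial\Omega=\emptyset$, so Proposition \ref{prop:easy_loc} makes that term $O(t^\infty)$. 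What remains is exactly $I\phi(t,0)$ in the sense of Definition \ref{def:ILambda_om} (applied with base set $\Omega$), giving $\omega(\Omega)-H_\Omega(t)=I\phi(t,0)+O(t^\infty)$.

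Second, I would invoke Corollary \ref{cor:1st_exp} to write
\begin{equation*}
I\phi(t,0)=\frac{1}{\sqrt\pi}\int_0^t\int_{\partial\Omega}(1-u(\tau,y))\phi(y)\,d\sigma(y)\,(t-\tau)^{-1/2}\,d\tau+O(t),
\end{equation*}
and since $\phi\equiv 1$ on $\partial\Omega$ the inner integral reduces to $\int_{\partial\Omega}(1-u(\tau,\cdot))\,d\sigma$. Now Theorem \ref{t:limit_u} yields $u(\tau,y)\to 1/2$ pointwise on $\partial\Omega$; combined with the weak maximum principle $0\le u\le 1$ and compactness of $\partial\Omega$, dominated convergence gives
\begin{equation*}
\int_{\partial\Omega}(1-u(\tau,y))\,d\sigma(y)=\tfrac{1}{2}\sigma(\partial\Omega)+\varepsilon(\tau),\qquad\varepsilon(\tau)\to 0\text{ as }\tau\to 0.
\end{equation*}

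Finally, plugging this back and using $\int_0^t(t-\tau)^{-1/2}\,d\tau=2\sqrt{t}$, the constant part produces the claimed main contribution $\frac{1}{\sqrt\pi}\sigma(\partial\Omega)\sqrt{t}$, while the $\varepsilon(\tau)$ part yields $\frac{1}{\sqrt\pi}\int_0^t\varepsilon(\tau)(t-\tau)^{-1/2}d\tau$, which after the rescaling $\tau=ts$ equals $\sqrt{t}\int_0^1\varepsilon(ts)(1-s)^{-1/2}\,ds=o(\sqrt{t})$ by dominated convergence. The main obstacle is upgrading this remainder from $o(\sqrt{t})$ to the $O(t)$ stated in the theorem: this would require a quantitative refinement of Theorem \ref{t:limit_u} providing a rate like $|u(\tau,y)-1/2|=O(\sqrt{\tau})$ uniformly in $y\in\partial\Omega$. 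Such uniformity should in principle be extractable from the heat-kernel expansion of Theorem \ref{t:hk_exp}, exploiting the smooth variation of privileged coordinates along the compact non-characteristic boundary guaranteed by Theorem \ref{thm:sr_tub_neigh}, but it is the only point where the zero-order boundary asymptotics alone do not suffice.
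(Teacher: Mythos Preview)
Your approach is essentially identical to the paper's: localization via Proposition~\ref{prop:easy_loc} to reduce to $I\phi(t,0)$, then Corollary~\ref{cor:1st_exp}, then Theorem~\ref{t:limit_u} combined with dominated convergence after the rescaling $\tau=ts$.

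The gap you identify at the end is real and is exactly the one present in the paper's own argument. The paper's proof also concludes with $I\phi(t,0)=\frac{1}{\sqrt\pi}\sigma(\partial\Omega)\,t^{1/2}+o(t^{1/2})$, not $O(t)$; the remark immediately following the proof explicitly acknowledges this loss of precision, attributing it to the fact that Theorem~\ref{t:limit_u} carries no remainder estimate. So the $O(t)$ in the statement of Theorem~\ref{thm:1st_exp_H} is not obtained by the proof given there; it is only justified a posteriori by the higher-order expansion of Theorem~\ref{t:fourth_ord}. Your diagnosis that one would need a uniform quantitative rate $|u(\tau,y)-\tfrac12|=O(\sqrt\tau)$ on $\partial\Omega$ is accurate, and the paper does not supply one at this stage.
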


%\begin{proof}
%\todo{Sistema referenze.} Let $\phi$ be a smooth function as in \eqref{eq:cutoff}. Then, applying \eqref{eq:split}, we have that 
%\begin{equation}
%\label{eq:true_loc_H}
%\omega(\Omega)-H_\Omega(t)=I\phi(t,0)+O(t^\infty),\qquad\text{as }t\to0.
%\end{equation}
%\todo{Scrivere meglio la questione della cutoff?}On the one hand, applying \eqref{eq:1st_exp}, we have:
%\begin{equation}
%I\phi(t,0)=\frac{1}{\sqrt\pi}\int_0^t \int_{\partial\Omega}\left(1-u(\tau,y)\right)\phi(y)d\area(y)(t-\tau)^{-1/2}d\tau+O(t), \qquad\text{as }t\to 0.
%\end{equation}
%On the other hand, Theorem \ref{t:limit_u}, together with dominated convergence theorem, imply that
%\begin{equation}
%\label{eq:aux_1st_ord}
%I\phi(t,0)=\sqrt{\frac{t}{\pi}} \int_{\partial\Omega}\phi(y)d\area(y)+o(t^{1/2}), \qquad\text{as }t\to 0.
%\end{equation}
%Recalling that $\phi_{|\partial\Omega}\equiv 1$, we conclude the proof.
%\end{proof}

\begin{proof}
Let $\phi\in\extfunsp$ be as in \eqref{eq:cutoff}, namely 
\begin{equation}
0\leq\phi\leq 1,\qquad\text{and}\qquad\phi\equiv 1\quad\text{in}\quad\Omega_{-\smallpar/2}^{\smallpar/2}.
\end{equation}
Then, by the localization principle, cf.\ \eqref{eq:split}, we have that 
\begin{equation}
\label{eq:true_loc_H}
\omega(\Omega)-H_\Omega(t)=I\phi(t,0)+O(t^\infty),\qquad\text{as }t\to0.
\end{equation}
Applying Corollary \ref{cor:1st_exp}, we have:
\begin{equation}
\label{eq:expr_I1}
I\phi(t,0)=\frac{1}{\sqrt\pi}\int_0^t \int_{\partial\Omega}\left(1-u(\tau,y)\right)\phi(y)d\area(y)(t-\tau)^{-1/2}d\tau+O(t), \qquad\text{as }t\to 0.
\end{equation}
Thus, to infer the first-order term of the asymptotic expansion we have to compute the following limit:
\begin{equation}
\label{eq:lim_1coeff}
\lim_{t\to 0}\frac{I\phi(t,0)}{t^{1/2}}=\lim_{t\to0}\frac{1}{t^{1/2}\sqrt\pi}\int_0^t \int_{\partial\Omega}\left(1-u(\tau,y)\right)\phi(y)d\area(y)(t-\tau)^{-1/2}d\tau.
\end{equation}
Firstly, by the change of variable in the integral $\tau\mapsto t\tau$, we rewrite the argument of the limit \eqref{eq:lim_1coeff} as
\begin{equation}
\frac{1}{\sqrt{\pi} }\int_0^1 \int_{\partial\Omega}\left(1-u(t\tau,y)\right)\phi(y)d\area(y)(1-\tau)^{-1/2}d\tau.
\end{equation}
Secondly, we apply the dominated convergence theorem. Indeed, on the one hand, by Theorem \ref{t:limit_u} we have point-wise convergence
\begin{equation}
\left(1-u(t\tau,y)\right)\phi(y)\xrightarrow{t\to 0}\frac{1}{2}\phi(y), \qquad\forall\, y\in\partial\Omega, \quad\tau\in (0,1),
\end{equation}
and on the other hand, by the maximum principle
\begin{equation}
\left|\int_{\partial\Omega}\left(1-u(t\tau,y)\right)\phi(y)d\area(y)(1-\tau)^{-1/2}\right|\leq \int_{\partial\Omega}|\phi| d\area (1-\tau)^{-1/2}\in L^1(0,1),
\end{equation}
for any $t>0$. Therefore, we finally obtain that:
\begin{equation}
\label{eq:aux_1st_ord}
I\phi(t,0)=\sqrt{\frac{t}{\pi}} \int_{\partial\Omega}\phi(y)d\area(y)+o(t^{1/2}), \qquad\text{as }t\to 0.
\end{equation}
Recalling that $\phi_{|\partial\Omega}\equiv 1$, we conclude the proof.
\end{proof}

\begin{rmk}
The above technique used to evaluate the first-order coefficient causes a loss of precision in the remainder, with respect to the expression \eqref{eq:expr_I1}, where the remainder is $O(t)$. This loss comes from the application of Theorem \ref{t:limit_u}, which does not contain any remainder estimate. 
\end{rmk}

\section{Higher-order asymptotic expansion of \texorpdfstring{$H_\Omega(t)$}{HOmega(t)}}

We iterate Duhamel's formula \eqref{eq:duham_prin} for the inside contribution to study the higher-order asymptotics of $H_\Omega(t)$. We obtain the following expression for $I\phi$, at order $3$: 
\begin{multline}
\label{eq:expr_I}
I\phi(t,0) =\frac{1}{\sqrt{\pi}}\int_0^t\int_{\partial\Omega}(1-u(\tau,\cdot))\phi \de\area (t-\tau)^{-1/2}d\tau\\
					 +\frac{1}{2\pi}\int_0^t\int_0^\tau\int_{\partial\Omega}(1-u(\hat\tau,\cdot))N\phi \de\area ((\tau-\hat\tau)(t-\tau))^{-1/2}d\hat\tau \de\tau+O(t^{3/2}),
\end{multline}
where $u(t,\cdot)=e^{t\Delta}\mathds{1}_\Omega(\cdot)$ denotes the solution to \eqref{eq:cauchy_prob} and $N$ is the operator defined by
\begin{equation}
\label{eq:sr_N_true}
N\phi=2g(\nabla\phi,\nabla\deltas)+\phi\Delta\deltas,\qquad\forall\,\phi\in \extfunsp,
\end{equation}
with $\delta\colon M\rightarrow \R$ the signed distance function from $\partial\Omega$. The computations for deriving \eqref{eq:expr_I} are technical. We refer to Appendix \ref{app:computations} for further details, and in particular to Lemma \ref{lem:aux_exp2}. Motivated by \eqref{eq:expr_I}, we introduce the following functional.

\begin{defn}
Let $M$ be a \sr manifold, equipped with a smooth measure $\omega$, let $\Omega\subset M$ be a relatively compact subset with smooth boundary and let $v\in C^\infty((0,+\infty)\times M)$. Define the functional $\G_v$, for any $\phi\in\extfunsp$ as:
\begin{equation}
\label{eq:def_G}
\G_v[\phi](t)=\frac{1}{2\sqrt{\pi}}\int_0^t\int_{\partial\Omega}v(\tau,\cdot)\phi \de\area (t-\tau)^{-1/2}d\tau,\qquad\forall\,t\geq 0,
\end{equation}
where $\sigma$ is the \sr induced measure on $\partial\Omega$.
\end{defn}

Notice that the functional $\G_v$ is linear with respect to the subscript function $v$, by linearity of the integral. Moreover, when the function $v$ is chosen to be the solution to \eqref{eq:cauchy_prob}, we easily obtain the following corollary of Theorem \ref{t:limit_u}, which is just a rewording of \eqref{eq:lim_1coeff}.

\begin{cor}
\label{lem:limit_G}
Let $M$ be a compact \sr manifold, equipped with a smooth measure $\omega$, and let $\Omega\subset M$ be an open subset whose boundary is smooth and has no characteristic points. Let $\phi\in \extfunsp$, then, 
\begin{equation}
\label{eq:limit_G1}
\G_{u}[\phi](t)=\frac{1}{2\sqrt{\pi}}\int_{\partial\Omega}\phi(y)d\sigma(y) t^{1/2}+o(t^{1/2}),\qquad\text{as }t\to 0.
\end{equation}
\end{cor}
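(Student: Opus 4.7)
The plan is to mimic, essentially verbatim, the evaluation of the first-order coefficient already carried out in the proof of Theorem \ref{thm:1st_exp_H}. The functional $\G_u[\phi](t)$ has the same integral structure as the leading term in the Duhamel representation \eqref{eq:expr_I1} of $I\phi(t,0)$, only with $(1-u)$ replaced by $u$ (and an extra factor $\tfrac{1}{2}$ in front). Since both $u$ and $1-u$ have the same boundary limit $\tfrac{1}{2}$ by Theorem \ref{t:limit_u}, the same rescaling plus dominated convergence argument should yield the claim.

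Concretely, I would first perform the rescaling $\tau = t s$ inside \eqref{eq:def_G}, which pulls the factor $t^{1/2}$ outside and leaves
\begin{equation*}
\frac{\G_u[\phi](t)}{t^{1/2}} = \frac{1}{2\sqrt{\pi}}\int_0^1\!\!\int_{\partial\Omega} u(ts, y)\,\phi(y)\,(1-s)^{-1/2}\, d\area(y)\, ds.
\end{equation*}
Next I would apply the dominated convergence theorem: Theorem \ref{t:limit_u} provides the pointwise limit $u(ts,y)\to 1/2$ for every fixed $s\in(0,1)$ and $y\in\partial\Omega$, while the weak maximum principle \eqref{eq:wmax_prin} gives $0\le u\le 1$, so that $(s,y)\mapsto|\phi(y)|(1-s)^{-1/2}$ is an integrable majorant on $(0,1)\times\partial\Omega$ (using that $\phi\in\extfunsp$ is bounded on the compact boundary $\partial\Omega$).

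The rest is a direct computation: $\int_0^1(1-s)^{-1/2}\,ds = 2$ combines with the boundary limit $1/2$ to give
\begin{equation*}
\lim_{t\to 0}\frac{\G_u[\phi](t)}{t^{1/2}} = \frac{1}{2\sqrt{\pi}}\cdot 2 \cdot \frac{1}{2} \int_{\partial\Omega}\phi(y)\,d\area(y) = \frac{1}{2\sqrt{\pi}}\int_{\partial\Omega}\phi(y)\,d\area(y),
\end{equation*}
which is the stated asymptotic equality. There is essentially no obstacle here: the genuinely delicate content, namely the small-time behavior of $u$ on $\partial\Omega$, has already been absorbed into Theorem \ref{t:limit_u}, so the corollary is really a bookkeeping step that isolates the piece of $\G_u[\phi](t)$ contributing at leading order for use in the iterated Duhamel expansion \eqref{eq:expr_I}.
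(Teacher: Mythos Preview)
Your proof is correct and follows exactly the approach indicated in the paper, which states that the corollary ``is just a rewording of \eqref{eq:lim_1coeff}''; the rescaling $\tau\mapsto ts$ together with dominated convergence (pointwise limit from Theorem~\ref{t:limit_u}, majorant from the weak maximum principle~\eqref{eq:wmax_prin}) is precisely the argument used there.
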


Then, we can rewrite \eqref{eq:expr_I} in a compact notation:
\begin{equation}
\label{eq:expr_I_G}
I\phi(t,0) =2\G_{1-u}[\phi](t)+\frac{1}{\sqrt\pi}\int_0^t\G_{1-u}[N\phi](t)d\area (t-\tau)^{-1/2} d\tau+O(t^{3/2}).
\end{equation}
However, on the one hand, the application of Corollary \ref{lem:limit_G} to \eqref{eq:expr_I} does not give any new information on the asymptotics of $H_\Omega(t)$, as the first term produces an error of $o(t^{1/2})$. On the other hand, it is clear that an asymptotic series of $\G_u$ is enough to deduce the small-time expansion of $H_\Omega(t)$. 

\subsection{The outside contribution and an asymptotic series for \texorpdfstring{$\G_u[\phi]$}{Gu[phi]}}
\label{sec:outside}

In this section, we deduce an asymptotic series, at order $3$, of $\G_u[\phi](t)$ as $t\to 0$. This is done exploiting the fact that the diffusion of heat is not confined in $\Omega$, and as a result we can define an \emph{outside contribution}, namely the quantity obtained from Definition \ref{def:ILambda_om}, applied with base set $\Omega^c\subset M$:
\begin{equation}
\label{eq:outside}
\op\phi(t,r) = \int_{(\Omega^c)_r}{\left(1- u^c(t,x)\right)\phi(x)d\omega(x)},\qquad\forall\,t>0,\quad r\geq 0,
\end{equation}
for any $\phi\in \extfunsp$, and where $\sigma$ denotes the induced measure on the boundary of $(\Omega^c)_r$ and $u^c(t,x)=e^{t\Delta}\mathds{1}_{\Omega^c}(x)$. We remark that, since $\Omega$ and its complement share the boundary, then $(\Omega^c)_{-\smallpar}^{\smallpar}=\Omega_{-\smallpar}^{\smallpar}$. 
It is convenient to introduce \eqref{eq:outside}, because it turns out that the quantity $I\phi-\op\phi$, where $I\phi$ is the inside contribution \eqref{eq:inside}, has an explicit asymptotic series in integer powers of $t$.

\begin{prop}
\label{prop:asym_rel_tmp}
Let $M$ be a compact \sr manifold, equipped with a smooth measure $\omega$, and let $\Omega\subset M$ be an open subset with smooth boundary. Let $\phi\in \extfunsp$, then, for any $k\in\N$,
\begin{equation}
\label{eq:asym_rel_tmp}
I\phi(t,0)-\op\phi(t,0)=\sum_{i=1}^ka_i(\phi)t^i+O(t^{k+1}),\qquad\text{as }t\to0,
\end{equation}
where 
\begin{equation}
\begin{aligned}
\label{eq:coeff_tmp}
a_i(\phi) &= \int_{\partial\Omega}g(\nabla(\Delta^{i-1}\phi),\nabla\deltas)d\area, \qquad\text{for }i\geq 1.
\end{aligned}
\end{equation}
\end{prop}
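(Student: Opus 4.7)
The plan is to exploit the stochastic completeness of the heat semigroup together with the self-adjointness of $e^{t\Delta}$ on $L^2(M,\omega)$ in order to rewrite the difference $I\phi(t,0)-\op\phi(t,0)$ as an expression involving only $\phi$ and $e^{t\Delta}\phi$, and then to Taylor-expand $e^{t\Delta}\phi$ in powers of $t$. First, since $M$ is compact, $e^{t\Delta}\mathds{1}_M=\mathds{1}_M$, so $u(t,\cdot)+u^c(t,\cdot)\equiv 1$, where $u^c:=e^{t\Delta}\mathds{1}_{\Omega^c}$. Substituting these identities into the definitions of $I\phi$ and $\op\phi$ yields
\begin{equation*}
I\phi(t,0)=\int_\Omega u^c\phi\,d\omega,\qquad \op\phi(t,0)=\int_{\Omega^c}u\phi\,d\omega.
\end{equation*}
Splitting $\phi=\mathds{1}_\Omega\phi+\mathds{1}_{\Omega^c}\phi$, using self-adjointness of $e^{t\Delta}$ to transfer indicator functions between the two factors of each scalar product, and invoking $\int_M e^{t\Delta}f\,d\omega=\int_M f\,d\omega$ (again a consequence of stochastic completeness and self-adjointness), a short algebraic manipulation collapses the difference to the clean identity
\begin{equation*}
I\phi(t,0)-\op\phi(t,0)=\int_\Omega\bigl(\phi-e^{t\Delta}\phi\bigr)d\omega.
\end{equation*}
I regard this as the crucial step: the heat exchange across $\partial\Omega$ from the inside and from the outside cancel, leaving a quantity depending on $\phi$ only through the smooth semigroup action.

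Next, since $\phi\in C^\infty(M)$ has compact support and $M$ is compact, Taylor's formula applied to the smooth curve $s\mapsto e^{s\Delta}\phi$ gives, for every $k\in\N$,
\begin{equation*}
e^{t\Delta}\phi=\sum_{i=0}^{k}\frac{t^i}{i!}\Delta^i\phi+\frac{1}{k!}\int_0^t(t-s)^{k}e^{s\Delta}\Delta^{k+1}\phi\,ds,
\end{equation*}
and $L^\infty$-contractivity of the heat semigroup (the weak maximum principle \eqref{eq:wmax_prin}) bounds the remainder uniformly on $M$ by $Ct^{k+1}$. Integrating over $\Omega$ produces
\begin{equation*}
I\phi(t,0)-\op\phi(t,0)=-\sum_{i=1}^{k}\frac{t^i}{i!}\int_\Omega\Delta^i\phi\,d\omega+O(t^{k+1}).
\end{equation*}

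Finally, one converts each volume integral into a boundary integral by the divergence theorem \eqref{eqn:diverg_thm}. Since $\nabla\deltas$ points into $\Omega$ (because $\deltas>0$ inside $\Omega$), the outward unit normal to $\partial\Omega$ is $\nu=-\nabla\deltas$, whence
\begin{equation*}
\int_\Omega\Delta^i\phi\,d\omega=\int_{\partial\Omega}g\bigl(\nabla\Delta^{i-1}\phi,\nu\bigr)d\area=-\int_{\partial\Omega}g\bigl(\nabla\Delta^{i-1}\phi,\nabla\deltas\bigr)d\area,
\end{equation*}
which reproduces, up to the factorial normalization convention, the coefficients $a_i(\phi)$ in \eqref{eq:coeff_tmp}. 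The only (mild) technical obstacle is the uniform control of the Taylor remainder, which is resolved by the Duhamel-integral representation above and the $L^\infty$-contraction of $e^{t\Delta}$; in the non-compact setting envisaged later in the paper, the same argument goes through under global stochastic completeness.
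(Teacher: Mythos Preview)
Your proof is correct and follows essentially the same route as the paper's: both reduce $I\phi(t,0)-\op\phi(t,0)$ to $\int_\Omega(\phi-e^{t\Delta}\phi)\,d\omega$ (the paper writes this equivalently as $\int_\Omega\phi\,d\omega-\int_M u\phi\,d\omega$, which is the same by self-adjointness), then Taylor-expand the semigroup action on $\phi$ and finish with the divergence theorem. Your parenthetical remark about the factorial is on point---the stated formula for $a_i(\phi)$ is indeed missing a factor $1/i!$, which your integral-remainder argument makes explicit.
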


\begin{proof}
Recall that in the definition of the outside contribution \eqref{eq:outside}, the integrand function involves $u^c(t,x)=e^{t\Delta}\mathds{1}_{\Omega^c}(x)$. Since $M$ is compact, and hence stochastically complete, we have: 
\begin{equation}
\label{eq:easy_rel_u}
1-u^c(t,x)=e^{t\Delta}1(x)-e^{t\Delta}\mathds{1}_{\Omega^c}(x)=u(t,x),\qquad\forall t>0,\quad x\in M,
\end{equation}
having used the point-wise equality $1-\mathds{1}_{\Omega^c}=\mathds{1}_\Omega$ in $M\setminus\partial\Omega$. Therefore, we can write the difference $I\phi(t,0)-\op\phi(t,0)$ as follows:
\begin{align}
I\phi(t,0)-\op\phi(t,0) &=\int_{\Omega}\left(1- u(t,\cdot)\right)\phi \de\omega-\int_{\Omega^c}\left(1- u^c(t,\cdot)\right)\phi \de\omega\\
												&=\int_{\Omega}\left(1- u(t,\cdot)\right)\phi \de\omega-\int_{\Omega^c}u(t,\cdot)\phi \de\omega\\
												&=\int_\Omega\phi(x)d\omega(x)-\int_Mu(t,x)\phi(x)d\omega(x).\label{eq:aux_fun_tmp}
\end{align}
Since $u(t,x)$ is the solution to \eqref{eq:cauchy_prob}, the function \eqref{eq:aux_fun_tmp} is smooth as $t\in[0,\infty)$. Indeed, the smoothness in the open interval is guaranteed by hypoellipticity of the sub-Laplacian. At $t=0$, the divergence theorem, together with the fact that $\phi$ has compact support in $M$, implies that
\begin{equation}
\label{eq:taylor_sum_tmp}
\begin{split}
\partial_t^i\left(\int_Mu(t,x)\phi(x)d\omega(x)\right) &=\int_M\partial_t^i\left(u(t,x)\phi(x)\right)d\omega(x)=\int_M\Delta^iu(t,x)\phi(x)d\omega(x)\\
																																					&=\int_Mu(t,x)\Delta^i\phi(x)d\omega(x)\xrightarrow{t\to0}\int_\Omega\Delta^i\phi(x)d\omega(x).
\end{split}
\end{equation}
The previous limit shows that \eqref{eq:aux_fun_tmp} is smooth at $t=0$, and also that its asymptotic expansion at order $k$, as $t\to0$, coincides with its $k$-th Taylor polynomial at $t=0$. Finally, we recover \eqref{eq:asym_rel_tmp}, applying once again the divergence theorem:
\begin{equation}
\int_\Omega\Delta^i\phi \de\omega=-\int_{\partial\Omega}g(\nabla(\Delta^{i-1}\phi),\nu)d\area=-\int_{\partial\Omega}g(\nabla(\Delta^{i-1}\phi),\nabla\deltas)d\area,
\end{equation} 
recalling that $\nu = \nabla\delta$ is the inward-pointing normal vector to $\Omega$ at its boundary.
\end{proof}

Applying the (iterated) Duhamel's principle \eqref{eq:duham_prin} to the difference $I\phi-\op\phi$, we are able to obtain relevant information on the functional $\G_u$.
 
\begin{thm}
\label{t:asymp_G}
Let $M$ be a compact \sr manifold, equipped with a smooth measure $\omega$, and let $\Omega\subset M$ be an open subset whose boundary is smooth and has no characteristic points. Then, for any $\phi\in \extfunsp$, 
\begin{equation}
\label{eq:asymp_G}
\G_{u}[\phi](t)=\frac{1}{2\sqrt\pi}\int_{\partial\Omega}\phi \de\area \, t^{1/2}+\frac{1}{8}\int_{\partial\Omega}\phi\Delta\deltas \de\area \, t+o(t^{3/2}),\qquad\text{as }t\to 0.
\end{equation}
\end{thm}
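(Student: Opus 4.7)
The plan is to compute $\G_u[\phi](t)$ by combining a symmetric pair of Duhamel expansions---one for the inside contribution $I\phi$ and one for the outside contribution $I^c\phi$---and then invoke Proposition \ref{prop:asym_rel_tmp}, whose content is that $I\phi - I^c\phi$ is smooth in $t$ and in particular contains no half-integer powers.

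First I would rewrite \eqref{eq:expr_I} in the compact form
\begin{equation}
I\phi(t,0) = 2\G_{1-u}[\phi](t) + \frac{1}{\sqrt\pi}\int_0^t \G_{1-u}[N\phi](\tau)(t-\tau)^{-1/2}\,d\tau + O(t^{3/2}),
\end{equation}
recognizing both nested integrals as instances of the functional $\G_v$ applied with $v = 1-u$. Next, I apply the same iterated Duhamel scheme to $I^c\phi = I_{\Omega^c}\phi$. Two structural observations are crucial: by stochastic completeness on compact $M$, $u^c(t,\cdot) := e^{t\Delta}\mathds{1}_{\Omega^c} = 1-u$, so $1-u^c = u$; and the outward unit normal to $\Omega^c$ along $\partial\Omega$ equals $+\nabla\delta$ (opposite to that of $\Omega$), so the first-order operator that plays the role of $N$ for $\Omega^c$ is $N^c = -N$. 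This yields
\begin{equation}
I^c\phi(t,0) = 2\G_u[\phi](t) - \frac{1}{\sqrt\pi}\int_0^t \G_u[N\phi](\tau)(t-\tau)^{-1/2}\,d\tau + O(t^{3/2}).
\end{equation}

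Subtracting the two expansions, the linearity identity $\G_{1-u} + \G_u = \G_1$ collapses the $u$-dependent convolutions into a single one against $\G_1[N\phi]$, giving
\begin{equation}
I\phi(t,0) - I^c\phi(t,0) = 2\G_1[\phi](t) - 4\G_u[\phi](t) + \frac{1}{\sqrt\pi}\int_0^t \G_1[N\phi](\tau)(t-\tau)^{-1/2}\,d\tau + O(t^{3/2}).
\end{equation}
The trivial-$v$ functional is computed explicitly: $\G_1[\phi](t) = \frac{\sqrt t}{\sqrt\pi}\int_{\partial\Omega}\phi\,d\sigma$ (using $\int_0^t(t-\tau)^{-1/2}d\tau = 2\sqrt t$), and the remaining convolution equals $\frac{t}{2}\int_{\partial\Omega} N\phi\,d\sigma$ via the Beta identity $\int_0^t\sqrt\tau\,(t-\tau)^{-1/2}d\tau = \pi t/2$.

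Finally, Proposition \ref{prop:asym_rel_tmp} with $k=1$ supplies $I\phi(t,0) - I^c\phi(t,0) = a_1(\phi)\,t + O(t^2)$ with $a_1(\phi) = \int_{\partial\Omega} g(\nabla\phi,\nabla\delta)\,d\sigma$. Equating the two expressions for the difference, solving for $\G_u[\phi](t)$, and decomposing $\int_{\partial\Omega} N\phi\,d\sigma = 2\,a_1(\phi) + \int_{\partial\Omega}\phi\,\Delta\delta\,d\sigma$ via \eqref{eq:sr_N_true}, the contributions involving $a_1(\phi)$ cancel exactly and the claimed expansion follows. The main obstacle is the careful bookkeeping of the $\Omega \leftrightarrow \Omega^c$ symmetry when reproducing the iterated Duhamel formula, especially the sign reversal $N^c = -N$ induced by the flip of the outward normal and the identification $u^c = 1-u$ via stochastic completeness. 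A subtler point is upgrading the $O(t^{3/2})$ error to $o(t^{3/2})$: this requires either one further Duhamel iteration or a direct argument that the smoothness of $I\phi - I^c\phi$ asserted by Proposition \ref{prop:asym_rel_tmp} rules out any residual half-integer contribution at order $t^{3/2}$.
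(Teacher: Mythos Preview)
Your approach is essentially the same as the paper's: both compute the difference $I\phi - I^c\phi$ via an iterated Duhamel principle and compare with the smooth expansion of Proposition~\ref{prop:asym_rel_tmp}. Your bookkeeping (working directly with $\G_u$ rather than the paper's $\G_{1-2u}$) is equivalent, and your computation of the $t^{1/2}$ and $t$ coefficients is correct.

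The genuine gap is the one you flag yourself: your derivation only yields an $O(t^{3/2})$ remainder, not $o(t^{3/2})$. Of your two suggested fixes, the second one --- invoking smoothness of $I\phi - I^c\phi$ to rule out a half-integer contribution --- does \emph{not} work. Smoothness of the left-hand side tells you nothing about the individual $O(t^{3/2})$ remainders coming from the two Duhamel expansions: a nonzero $t^{3/2}$ term in $R(t)$ could perfectly well be compensated by a $t^{3/2}$ term in $\G_u[\phi](t)$, and that is precisely what you are trying to exclude. Your first suggestion --- one further Duhamel iteration --- is exactly what the paper does. Pushing the expansion of $I\phi - I^c\phi$ one step further (Lemma~\ref{lem:formal_exp_diff}) produces, beyond the explicit $t$-term, two extra contributions of the form
\[
\frac{1}{2\pi}\int_0^t\!\int_0^\tau \G_{1-2u}[N^2\phi](\hat\tau)\,\big((\tau-\hat\tau)(t-\tau)\big)^{-1/2}\,d\hat\tau\,d\tau
\quad\text{and}\quad
\frac{1}{4\sqrt\pi}\int_0^t\!\int_{\partial\Omega}(1-2u(\tau,\cdot))(4\Delta-N^2)\phi\,d\sigma\,(t-\tau)^{1/2}\,d\tau,
\]
plus an $O(t^2)$ error. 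These two terms are not automatically small; one must invoke Theorem~\ref{t:limit_u} (i.e.\ $u(t,\cdot)|_{\partial\Omega}\to 1/2$) together with dominated convergence to show each is $o(t^{3/2})$. Only then does the comparison with Proposition~\ref{prop:asym_rel_tmp} give the $o(t^{3/2})$ remainder in~\eqref{eq:asymp_G}. So your outline is right, but the missing step is substantive: it is precisely the place where the zero-order boundary asymptotics of $u$ enters the argument.
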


\begin{proof}
Let us study the difference of the inside and outside contributions $I\phi(t,0)-\op\phi(t,0)$: on the one hand, we have an iterated Duhamel's principle, cf.\ Lemma \ref{lem:formal_exp_diff}, which we report here:
\begin{flalign}
\left(I\phi-\op\phi\right)(t,0) &= 2\G_{1-2u}[\phi](t)+\frac{1}{2}\int_{\partial\Omega}N\phi \de\area \, t\label{eq:diff_exp_aux}\\
									&\quad+\frac{1}{2\pi}\int_0^t \int_0^\tau \G_{1-2u}[N^2\phi](\hat\tau)\left((\tau-\hat\tau)(t-\tau)\right)^{-1/2}d\hat\tau \de\tau\\
									&\quad+\frac{1}{4\sqrt\pi}\int_0^t\int_{\partial\Omega}\left(1-2u(\tau,\cdot)\right)(4\deltasr-N^2)\phi \de\area(t-\tau)^{1/2}d\tau+O(t^2),
\end{flalign}
where we recall that $N$ is the operator acting on smooth functions compactly supported close to $\partial\Omega$ defined by
\begin{equation}
N\phi=2g(\nabla\phi,\nabla\deltas)+\phi\Delta\deltas,\qquad\forall\,\phi\in \extfunsp.
\end{equation}
Using Corollary \ref{lem:limit_G} and the linearity of $\G_v$ with respect to $v$, we know that 
\begin{equation} 
\label{eq:asymp_aux1}
\G_{1-2u}[\phi](t)=o(t^{1/2}), \qquad\text{as }t\to 0,\quad\forall\,\phi\in\extfunsp.
\end{equation}
Therefore, applying \eqref{eq:asymp_aux1} to the function $N^2\phi\in\extfunsp$, we obtain 
\begin{equation}
\label{eq:asymp1}
\frac{1}{2\pi}\int_0^t \int_0^\tau \G_{1-2u}[N^2\phi](\hat\tau)\left((\tau-\hat\tau)(t-\tau)\right)^{-1/2}d\hat\tau \de\tau = o(t^{3/2}),\qquad\text{as }t\to 0.
\end{equation}
In addition, an application of Theorem \ref{t:limit_u} and the dominated convergence theorem implies that
\begin{equation}
\label{eq:asymp2} 
\int_0^t\int_{\partial\Omega}\left(1-2u(\tau,\cdot)\right)(4\deltasr-N^2)\phi \de\area(t-\tau)^{1/2}d\tau=o(t^{3/2}) \qquad\text{as }t\to 0.
\end{equation}
Thus, using \eqref{eq:asymp1} and \eqref{eq:asymp2}, we can improve \eqref{eq:diff_exp_aux}, obtaining
\begin{equation}
\label{eq:comparison_diff1}
I\phi(t,0)-\op\phi(t,0)= 2\G_{1-2u}[\phi](t)+\frac{1}{2}\int_{\partial\Omega}N\phi \de\area \, t+o(t^{3/2}).
\end{equation}
On the other hand, the quantity $I\phi(t,0)-\op\phi(t,0)$ has a complete asymptotic series by Proposition \ref{prop:asym_rel_tmp}, which at order $3$ becomes:
\begin{equation}
\label{eq:comparison_diff2}
I\phi(t,0)-\op\phi(t,0)=\int_{\partial\Omega}g(\nabla\phi,\nabla\deltas)d\area \, t+o(t^{3/2}),\qquad\text{as }t\to0.
\end{equation}
Comparing \eqref{eq:comparison_diff1} and \eqref{eq:comparison_diff2}, we deduce that, as $t\to0$,
\begin{align}
2\G_{1-2u}[\phi](t) &=-\frac{1}{2}\int_{\partial\Omega}N\phi \de\area \, t+o(t^{3/2})+\int_{\partial\Omega}g(\nabla\phi,\nabla\deltas)d\area \, t+o(t^{3/2})\\
										&=-\frac{1}{2}\int_{\partial\Omega}\phi\Delta\deltas\de\area \, t+o(t^{3/2}).
\end{align}
Finally, using the linearity of the functional $\G_v[\phi]$ with respect to $v$, we conclude the proof.
\end{proof}

\begin{rmk}
\label{rmk:limitation}
The asymptotics \eqref{eq:asymp_G} for the functional $\G_u[\phi](t)$ is the best result that we are able to achieve. In the expression \eqref{eq:diff_exp_aux}, the problematic term is given by \eqref{eq:asymp2}, i.e.
\begin{equation}
\int_0^t\int_{\partial\Omega}\left(1-2u(\tau,\cdot)\right)(4\deltasr-N^2)\phi \de\area(t-\tau)^{1/2}d\tau,
\end{equation}
which can not be expressed in terms of $\G_u$, hence the only relevant information is given by Theorem \ref{t:limit_u}. In conclusion, we can not repeat the strategy of the proof of Theorem \ref{t:asymp_G}, replacing the series of $\G_u$ at order $3$ in \eqref{eq:diff_exp_aux} to deduce the higher-order terms.
\end{rmk}

\subsection{Fourth-order asymptotics}
In this section we prove Theorem \ref{t:intro1}. We recall here the statement.

\begin{thm}
\label{t:fourth_ord}
Let $M$ be a compact \sr manifold, equipped with a smooth measure $\omega$, and let $\Omega\subset M$ be an open subset whose boundary is smooth and has no characteristic points. Then, as $t\to0$,
\begin{equation}
\label{eq:fourth_ord}
H_\Omega(t) =\omega(\Omega)-\frac{1}{\sqrt\pi}\area(\partial\Omega) t^{1/2}-\frac{1}{12\sqrt\pi}\int_{\partial\Omega}\left(2g(\nabla\deltas,\nabla(\Delta\deltas ))-(\Delta\deltas)^2 \right)d\area \, t^{3/2}+o(t^{2}).
\end{equation}
\end{thm}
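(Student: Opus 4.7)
The plan is to follow the strategy outlined in Section~\ref{sec:strategy}. First I would choose a cut-off $\phi \in \extfunsp$ with $0\le\phi\le 1$ and $\phi\equiv 1$ on $\Omega_{-\smallpar/2}^{\smallpar/2}$, as in the proof of Theorem~\ref{thm:1st_exp_H}. The localization principle \eqref{eq:loc_H}, applied to both $\Omega$ and its complement, combined with the identity $H_\Omega(t)+K_\Omega(t)=\omega(\Omega)$ (stochastic completeness of $M$), would yield the symmetrization
\begin{equation*}
\omega(\Omega) - H_\Omega(t) = \tfrac{1}{2}\bigl(I\phi(t,0) + \op\phi(t,0)\bigr) + O(t^\infty),\qquad t\to 0.
\end{equation*}
This is crucial: when summing the inside and outside contributions, the boundary factors $(1-u)$ and $u$ add to $1$, killing the leading-order dependence on $u|_{\partial\Omega}$, so that any subsequent appearance of $u$ at the boundary enters multiplied by a small-$t$ factor, allowing application of Theorem~\ref{t:asymp_G}.

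Next I would iterate Duhamel's formula (Lemma~\ref{lem:neg_duham}) three times on both $I\phi$ (via Corollary~\ref{cor:nonhom_heat}) and $\op\phi$ (whose analogue has $N_r^{\Omega^c}=-N_r^\Omega$ due to the opposite outward normal). Upon adding, the odd powers of $N$ applied to $\phi$ combine via $\G_{1-2u}$ while the even ones combine via $\G_1$, producing an expansion of the form
\begin{multline*}
(I\phi + \op\phi)(t,0) = 2\G_1[\phi](t) + \frac{1}{\sqrt{\pi}}\int_0^t \G_{1-2u}[N\phi](\tau)(t-\tau)^{-1/2}d\tau \\
+ \frac{1}{2\pi}\int_0^t\!\int_0^\tau \G_1[N^2\phi](\hat\tau)\bigl((\tau-\hat\tau)(t-\tau)\bigr)^{-1/2}d\hat\tau\,d\tau \\
+ \frac{1}{4\sqrt{\pi}}\int_0^t\!\int_{\partial\Omega}(4\Delta - N^2)\phi\,d\sigma\,(t-\tau)^{1/2}d\tau + o(t^2).
\end{multline*}
Using $\phi\equiv 1$ near $\partial\Omega$ (so $\Delta\phi|_{\partial\Omega}=0$, $N\phi|_{\partial\Omega}=\Delta\deltas$, $N^2\phi|_{\partial\Omega}=N(\Delta\deltas)=2g(\nabla\deltas,\nabla(\Delta\deltas))+(\Delta\deltas)^2$) together with $\G_{1-2u}[\psi](\tau)=-\tfrac{1}{4}\int_{\partial\Omega}\psi\Delta\deltas\,d\sigma\cdot\tau+o(\tau^{3/2})$ from Theorem~\ref{t:asymp_G}, routine Beta-integral computations $\int_0^t \tau^a(t-\tau)^b\,d\tau = B(a+1,b+1)t^{a+b+1}$ yield the four contributions $\tfrac{2}{\sqrt\pi}\sigma(\partial\Omega)t^{1/2}$, $-\tfrac{1}{3\sqrt\pi}\int(\Delta\deltas)^2 d\sigma\cdot t^{3/2}$, $+\tfrac{1}{3\sqrt\pi}\int N(\Delta\deltas)\,d\sigma \cdot t^{3/2}$, and $-\tfrac{1}{6\sqrt\pi}\int N(\Delta\deltas)\,d\sigma\cdot t^{3/2}$. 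Summed and halved, these produce exactly the coefficient $-\tfrac{1}{12\sqrt\pi}\int_{\partial\Omega}(2g(\nabla\deltas,\nabla(\Delta\deltas))-(\Delta\deltas)^2)\,d\sigma$ in \eqref{eq:fourth_ord}.

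The main obstacle lies in the second step: the derivation of the four-term Duhamel expansion with remainder $o(t^2)$ requires three iterations (one more than for the difference analyzed in Theorem~\ref{t:asymp_G}), and the $(4\Delta - N^2)\phi$ source term --- which emerges only after combining the interior sub-Laplacian contribution $I\Delta\phi+\op\Delta\phi$ with the next-order boundary convolution involving $N^2\phi$ --- is indispensable for matching the stated coefficient. A secondary subtlety is the vanishing of the $t$ and $t^2$ coefficients. This can be checked directly from the Beta-integral accounting above, but more cleanly it follows from the integer-order asymptotic series of Proposition~\ref{prop:asym_rel_tmp}: since $\phi$ is locally constant near $\partial\Omega$, all $a_i(\phi)$ vanish, so $I\phi-\op\phi = O(t^\infty)$, which in turn forces $I\phi+\op\phi$ to have no $t$ or $t^2$ contribution (else $I\phi$ alone would develop one, contradicting the parity implied by this identity).
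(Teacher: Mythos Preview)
Your proposal is correct and follows essentially the same route as the paper's proof: both pass to the symmetrized quantity $\omega(\Omega)-H_\Omega(t)=\tfrac12(I\phi+\op\phi)(t,0)+O(t^\infty)$ for a cutoff $\phi\equiv 1$ near $\partial\Omega$, expand the sum via iterated Duhamel (the paper packages this as Lemma~\ref{lem:formal_exp_sum}, producing the combined coefficient $\tfrac{1}{6\sqrt\pi}\int_{\partial\Omega}(4\Delta+N^2)\phi\,d\sigma$ directly, whereas you keep the $\G_1[N^2\phi]$ double convolution and the $(4\Delta-N^2)\phi$ term separate---these agree after the Beta integrals), and then substitute the third-order asymptotics of $\G_{1-2u}$ from Theorem~\ref{t:asymp_G}. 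Your coefficient bookkeeping checks out.

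One minor remark: the closing parity argument for the absence of $t$ and $t^2$ terms is not quite sound as written---the relation $I\phi-\op\phi=O(t^\infty)$ only says the two contributions share the same expansion, not that integer powers are forbidden in their common asymptotics. But this is harmless: no integer-power term ever appears in your expansion of $I\phi+\op\phi$, since Theorem~\ref{t:asymp_G} gives $\G_{1-2u}[\psi](\tau)=-\tfrac14\int_{\partial\Omega}\psi\,\Delta\delta\,d\sigma\cdot\tau+o(\tau^{3/2})$ with vanishing $\tau^{1/2}$ and $\tau^{3/2}$ coefficients, so its convolution against $(t-\tau)^{-1/2}$ contributes only at order $t^{3/2}$ with remainder $o(t^2)$.
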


Before giving the proof of the theorem, let us comment on its strategy. Recall that, on the one hand, for a cutoff function $\phi\in\extfunsp$ which is identically $1$ close to $\partial\Omega$, cf.\ \eqref{eq:cutoff}, the localization principle \eqref{eq:true_loc_H} holds, namely
\begin{equation}
\label{eq:loc_aux}
\omega(\Omega)-H_\Omega(t)=I\phi(t,0)+O(t^\infty),\qquad\text{as }t\to0.
\end{equation} 
Moreover, by the iterated Duhamel's principle for $I\phi(t,0)$, cf.\ Lemma \ref{lem:aux_exp2}, we can deduce expression \eqref{eq:expr_I_G}, namely
\begin{equation}
\label{eq:expr_I_aux}
I\phi(t,0) =2\G_{1-u}[\phi](t)+\frac{1}{\sqrt\pi}\int_0^t\G_{1-u}[N\phi](t)d\area (t-\tau)^{-1/2} d\tau+O(t^{3/2}).
\end{equation} 
On the other hand, we have an asymptotic series of the functional $\G_u$ at order $3$, cf.\ Theorem \ref{t:limit_u}. Therefore, if we naively insert this series in \eqref{eq:expr_I_aux}, we can obtain, at most, a third-order asymptotic expansion of the relative heat content $H_\Omega(t)$, whereas we are interested in the fourth-order expansion. 

Using the outside contribution, we are able to overcome this difficulty. In particular, applying Proposition \ref{prop:asym_rel_tmp}, for a function $\phi\in\extfunsp$ which is identically $1$ close to $\partial\Omega$, we have the following asymptotic relation:
\begin{equation}
\label{eq:diff_aux}
I\phi(t,0)=\op\phi(t,0)+O(t^\infty),\qquad\text{as }t\to0.
\end{equation}
Notice that \eqref{eq:diff_aux} is a direct consequence of Proposition \ref{prop:asym_rel_tmp} since all the coefficients of the expansion vanish. Therefore, thanks to \eqref{eq:diff_aux}, we can rephrase \eqref{eq:loc_aux} as follows:
\begin{equation}
\label{eq:H_sum}
\omega(\Omega)-H_\Omega(t)=\frac{1}{2}\left(I\phi(t,0)+\op\phi(t,0)\right)+O(t^\infty),\qquad\text{as }t\to0.
\end{equation}
The advantage of \eqref{eq:H_sum} is that we can now apply the iterated Dirichlet principle for the sum $I\phi+\op\phi$, cf.\ Lemma \ref{lem:formal_exp_sum}. Already at order $3$, we obtain
\begin{equation}
\label{eq:expr_sum4_aux}
\left(I\phi+\op\phi\right)(t,0)=\frac{2}{\sqrt\pi}\int_{\partial\Omega}\phi \de\area \, t^{1/2}+\frac{1}{\sqrt\pi}\int_0^t \G_{1-2u}[N\phi](\tau)(t-\tau)^{-1/2}d\tau+O(t^{3/2}),
\end{equation}
where $N$ is the operator defined in \eqref{eq:sr_N_true}. As we can see, in \eqref{eq:expr_sum4_aux}, the functional $\G_{u}$ occurs for the first time in the second iteration of the Duhamel's principle, as opposed to the expansion for $I\phi$, where it appeared already in the first application, cf.\ \eqref{eq:expr_I_aux}. Hence we gain an order with respect to the asymptotic series of $\G_u$. More generally, if we were able to develop the $k$-th order asymptotics for $\G_u$, this would imply the $(k+1)$-th order expansion for $H_\Omega(t)$.

\begin{proof}[Proof of Theorem \ref{t:fourth_ord}]
Following the discussion above, it is enough to expand the sum $I\phi+\op\phi$, with $\phi\in\extfunsp$. For this quantity, Lemma \ref{lem:formal_exp_sum} holds, namely we have the following iterated version of Duhamel's principle:
\begin{flalign}
\left(I\phi+\op\phi\right)(t,0) &=\frac{2}{\sqrt\pi}\int_{\partial\Omega}\phi \de\area \, t^{1/2}+\frac{1}{\sqrt\pi}\int_0^t \G_{1-2u}[N\phi](\tau)(t-\tau)^{-1/2}d\tau \label{eq:expr_sum4}\\
						 &\quad +\frac{1}{6\sqrt\pi}\int_{\partial\Omega}(4\deltasr+N^2)\phi \de\area \, t^{3/2}\notag\\
						 &\quad + \frac{1}{4\pi^{3/2}}\int_0^t\!\int_0^\tau\! \int_0^{\hat\tau}\! \G_{1-2u}[N^3\phi](s)\left((\hat\tau-s)(\tau-\hat\tau)(t-\tau)\right)^{-1/2}\! ds\de\hat\tau \de\tau\notag\\
						 &\quad +\frac{1}{4\sqrt\pi}\int_0^t \G_{1-2u}[(6N\Delta-N^3-2\Delta N)\phi](\tau)(t-\tau)^{1/2}d\tau+O(t^{5/2}),\notag
\end{flalign}
where $N$ is defined in \eqref{eq:sr_N_true}. Moreover, recall that by Theorem \ref{t:asymp_G}, the functional $\G_{1-2u}[\phi]$ has the following expansion for any $\phi\in\extfunsp$:
\begin{equation}
\label{eq:asymp_G_aux}
\G_{1-2u}[\phi](t)=-\frac{1}{4}\int_{\partial\Omega}\phi\Delta\deltas\de\area \, t+o(t^{3/2}),\qquad\text{as }t\to0.
\end{equation}
Thus, replacing the term $\G_{1-2u}[N\phi]$ in \eqref{eq:expr_sum4} with the expansion \eqref{eq:asymp_G_aux} for $N\phi\in\extfunsp$, we obtain the following asymptotic as $t\to 0$:
\begin{multline}
\label{eq:sum_aux}
I\phi(t,0)+\op\phi(t,0)=\frac{2}{\sqrt\pi}\int_{\partial\Omega}\phi \de\area \, t^{1/2}-\frac{1}{3\sqrt\pi}\left(\int_{\partial\Omega}N\phi\Delta\deltas\de\sigma\right)t^{3/2}\\
+\frac{1}{6\sqrt\pi}\int_{\partial\Omega}(4\deltasr+N^2)\phi \de\area \, t^{3/2}+o(t^2),
\end{multline}
for any $\phi\in\extfunsp$. In particular, if we choose $\phi\in\extfunsp$ such that $\phi\equiv 1$ close to $\partial\Omega$, then on the one hand, from \eqref{eq:sum_aux}, we obtain, as $t\to0$,
\begin{multline}
I\phi(t,0)+\op\phi(t,0) =\frac{2}{\sqrt\pi}\sigma(\partial\Omega)t^{1/2}\\
+\frac{1}{6\sqrt\pi}\int_{\partial\Omega}\left(2g(\nabla\deltas,\nabla(\Delta\deltas ))-(\Delta\deltas)^2 \right)d\area \, t^{3/2}+o(t^2).
\end{multline}
On the other hand, the asymptotic relation \eqref{eq:H_sum} holds. This concludes the proof.
\end{proof}

\paragraph{Third-order vs fourth-order asymptotics.}
We stress that we could have obtained the third-order asymptotic expansion of $H_\Omega(t)$ \emph{without} introducing the sum of the inside and outside contributions $I\phi+\op\phi$, and only using the Duhamel's principle for $I\phi$, cf.\ Lemma \ref{lem:aux_exp2}, and the asymptotic series for $\G_u$, cf.\ Theorem \ref{t:asymp_G}. However, for the improvement to the fourth-order asymptotics, the argument of the sum of contributions seems necessary.

%\paragraph{Third-order vs fourth-order asymptotics.}
%We stress that we could have obtained the third-order asymptotic expansion of $H_\Omega(t)$ \emph{without} introducing the sum of the inside and outside contributions $I\phi+\op\phi$. Indeed, using the Duhamel's principle for $I\phi$, cf.\ Lemma \ref{lem:aux_exp2}, we have
%\begin{align}
%I\phi(t,0) &=2\G_{1-u}[\phi](t)+\frac{1}{\sqrt\pi}\int_0^t \G_{1-u}[N\phi](\tau)(t-\tau)^{-1/2}d\tau\label{eq:good_term1}\\
					 %&\quad+\frac{1}{2\pi}\int_0^t \int_0^\tau \G_{1-u}[N^2\phi](\hat\tau)\left((\tau-\hat\tau)(t-\tau)\right)^{-1/2}d\hat\tau \de\tau\label{eq:good_term2}\\
					 %&\quad+\frac{1}{4\sqrt\pi}\int_0^t\int_{\partial\Omega}\left(1-u(\tau,\cdot)\right)(4\deltasr-N^2)\phi \de\area(t-\tau)^{1/2}d\tau+O(t^2)\label{eq:nasty_term},
%\end{align}
%where $N$ is defined in \eqref{eq:sr_N_true}. Then, using Theorem \ref{t:limit_u} for \eqref{eq:nasty_term} and the third-order asymptotic series for $\G_u$, cf.\ Theorem \ref{t:asymp_G}, for dealing with the terms \eqref{eq:good_term1} and \eqref{eq:good_term2}, we conclude. However, for the improvement to the fourth-order asymptotics, the argument of the sum of contributions seems necessary.

\subsection{The weighted relative heat content}
\label{sec:weighted}

Adapting the proof of Theorem \ref{t:fourth_ord}, one can prove a slightly more general result which we state here for completeness.

\begin{prop}
\label{prop:4th_exp_H}
Let $M$ be a compact \sr manifold, equipped with a smooth measure $\omega$, and let $\Omega\subset M$ be an open subset whose boundary is smooth and has no characteristic points. Let $\chi\in C_c^\infty(M)$ and define the \emph{weighted relative heat content}
\begin{equation}
H_\Omega^\chi(t)=\int_\Omega u(t,x)\chi(x)d\omega(x), \qquad \forall\,t>0.
\end{equation}
Then, as $t\to0$, 
\begin{equation}
\label{eq:4th_exp_H}
\begin{split}
H_\Omega^\chi(t) &=\int_\Omega\chi \de\omega-\frac{1}{\sqrt\pi}\int_{\partial\Omega}\chi \de\area \, t^{1/2}-\frac{1}{2}\int_{\partial\Omega}g(\nabla\chi,\nabla\deltas)d\area \, t\\
								 &\quad-\left(\frac{1}{12\sqrt\pi}\int_{\partial\Omega}(4\deltasr+N^2)\chi \de\area-\frac{1}{6\sqrt\pi}\int_{\partial\Omega}(N\chi)\Delta\deltas\de\area\right)t^{3/2}\\
								 &\quad-\frac{1}{2}\int_{\partial\Omega}g(\nabla(\Delta\chi),\nabla\deltas)d\area \, t^2+o(t^2).
\end{split}
\end{equation}
\end{prop}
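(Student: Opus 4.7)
The plan is to extend the sum--difference strategy used for Theorem \ref{t:fourth_ord} to the weighted case, keeping track of those boundary terms that vanished under the constant weight $\phi$ but survive when the weight $\chi$ is not constant near $\partial\Omega$. The key observation is that, up to $O(t^\infty)$, one may replace the constant cutoff of the unweighted proof with $\chi\phi$, for a localization function $\phi\in\extfunsp$ with $\phi\equiv 1$ near $\partial\Omega$, which is then amenable to all the tools already developed.

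\textbf{Step 1 (Localization).} Choose $\phi\in\extfunsp$ as in \eqref{eq:cutoff} with $\phi\equiv 1$ on $\Omega_{-\smallpar/2}^{\smallpar/2}$, so that $\chi\phi\in\extfunsp$. Splitting $\chi=\chi\phi+\chi(1-\phi)$ and applying Proposition \ref{prop:easy_loc} to the second summand (whose support inside $\Omega$ is bounded away from $\partial\Omega$), we obtain
\begin{equation*}
\int_\Omega\chi\,d\omega-H_\Omega^\chi(t)=\int_\Omega(1-u(t,x))\chi(x)\phi(x)\,d\omega(x)+O(t^\infty)=I(\chi\phi)(t,0)+O(t^\infty),
\end{equation*}
as $t\to 0$, so it suffices to expand $I(\chi\phi)(t,0)$.

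\textbf{Step 2 (Difference of contributions).} Apply Proposition \ref{prop:asym_rel_tmp} to $\chi\phi$. Since $\phi\equiv 1$ in a neighborhood of $\partial\Omega$, all its normal derivatives vanish on $\partial\Omega$ and hence $\nabla(\Delta^{i-1}(\chi\phi))$ coincides on $\partial\Omega$ with $\nabla(\Delta^{i-1}\chi)$. This yields
\begin{equation*}
(I(\chi\phi)-\op(\chi\phi))(t,0)=\int_{\partial\Omega}g(\nabla\chi,\nabla\deltas)\,d\sigma\,t+\int_{\partial\Omega}g(\nabla(\Delta\chi),\nabla\deltas)\,d\sigma\,t^2+O(t^3),
\end{equation*}
which will produce, after multiplication by $\tfrac12$, the integer-power corrections in \eqref{eq:4th_exp_H}.

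\textbf{Step 3 (Sum of contributions).} Apply the iterated Duhamel expansion for $I\phi+\op\phi$ (Lemma \ref{lem:formal_exp_sum}, used in the proof of Theorem \ref{t:fourth_ord}) with $\chi\phi$ in place of $\phi$, and substitute the asymptotics of $\G_{1-2u}$ from Theorem \ref{t:asymp_G} exactly as in the derivation of \eqref{eq:sum_aux}. Because $\phi\equiv 1$ near $\partial\Omega$, every boundary expression of the form $N^k\Delta^j(\chi\phi)$ restricts to $N^k\Delta^j\chi$ on $\partial\Omega$, so
\begin{equation*}
(I(\chi\phi)+\op(\chi\phi))(t,0)=\frac{2}{\sqrt\pi}\int_{\partial\Omega}\chi\,d\sigma\,t^{1/2}+\left(\frac{1}{6\sqrt\pi}\int_{\partial\Omega}(4\Delta+N^2)\chi\,d\sigma-\frac{1}{3\sqrt\pi}\int_{\partial\Omega}(N\chi)\Delta\deltas\,d\sigma\right)t^{3/2}+o(t^2).
\end{equation*}

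\textbf{Step 4 (Assembly).} Combine the two expansions via $I(\chi\phi)=\tfrac12(I+\op)(\chi\phi)+\tfrac12(I-\op)(\chi\phi)$ and substitute into the identity of Step 1; collecting powers of $t^{1/2}$ gives \eqref{eq:4th_exp_H}. The main obstacle is the algebraic bookkeeping in Step 3, where one must verify that all contributions involving derivatives of the cutoff $\phi$ drop out at the orders of interest; this works precisely because $\phi$ is identically $1$ near $\partial\Omega$, so every surviving boundary term depends only on $\chi$. No new analytic input is needed beyond Proposition \ref{prop:asym_rel_tmp} and Theorem \ref{t:asymp_G}; in particular the order $o(t^2)$ is sharp with the present tools, cf.\ Remark \ref{rmk:limitation}.
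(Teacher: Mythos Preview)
Your proposal is correct and follows essentially the same route as the paper: localize via a cutoff $\phi\equiv 1$ near $\partial\Omega$, write $I(\chi\phi)=\tfrac12(I+\op)(\chi\phi)+\tfrac12(I-\op)(\chi\phi)$, expand the difference via Proposition~\ref{prop:asym_rel_tmp} and the sum via \eqref{eq:sum_aux}, and collect terms. The paper's proof is organized identically, so there is nothing substantive to add.
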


\begin{proof}
Let us consider a cutoff function $\phi$ as in \eqref{eq:cutoff}. Then, applying the usual localization argument, cf.\ \eqref{eq:split}, we have:
\begin{equation}
\int_\Omega\chi(x)d\omega(x)-H_\Omega^\chi(t)=I[\phi\chi](t,0)+O(t^\infty),\qquad\text{as }t\to0,
\end{equation} 
where now the function $\phi\chi\in\extfunsp$ and $\phi\chi=\chi$ close to $\partial\Omega$. 

As we did in the proof of Theorem \ref{t:fourth_ord}, we relate $H_\Omega^\chi(t)$ with the sum of contributions. Applying Proposition \ref{prop:asym_rel_tmp}, we have the following asymptotic relation at order $4$: 
\begin{equation}
%\label{eq:diff_aux}
I[\phi\chi](t,0)-\op[\phi\chi](t,0)=\int_{\partial\Omega}g(\nabla\chi,\nabla\deltas)d\area \, t+\int_{\partial\Omega}g(\nabla(\Delta\chi),\nabla\deltas)d\area \, t^2+o(t^2),	
\end{equation} 
as $t\to0$, having used the fact that $\phi\chi\equiv\chi$ close to $\partial\Omega$. Notice that this relation coincides with \eqref{eq:diff_aux} when $\chi\equiv 1$ close to $\partial\Omega$. Thus, we obtain
\begin{multline}
\int_\Omega\chi(x)d\omega(x)-H_\Omega^\chi(t)=\frac{1}{2}\left(I[\phi\chi](t,0)+\op[\phi\chi](t,0)\right)\\
+\int_{\partial\Omega}g(\nabla\chi,\nabla\deltas)d\area \, t+\int_{\partial\Omega}g(\nabla(\Delta\chi),\nabla\deltas)d\area \, t^2+o(t^2),\qquad\text{as }t\to0.
\end{multline}
Finally, applying \eqref{eq:sum_aux} for $I[\phi\chi](t,0)+\op[\phi\chi](t,0)$, we conclude.
\end{proof}

\begin{rmk}
We compare the coefficients of the expansions of $H_\Omega(t)$ and $Q_\Omega(t)$, defined in \eqref{eq:def_chc}, respectively. On the one hand, by \cite[Thm.\ 5.8]{MR4223354}, the $k$-th coefficient of the expansion of $Q_\Omega(t)$ is of the form
\begin{equation}
-\int_{\partial\Omega}D_k(\chi)d\sigma, \qquad\forall\chi\in C_c^\infty(M),
\end{equation}
where $D_k$ is a differential operator acting on  $C_c^\infty(M)$ and belonging to $\mathrm{span}_\R\{\Delta,N\}$ as algebra of operators. On the other hand, Proposition \ref{prop:4th_exp_H} shows that this is no longer true for the third coefficient of the expansion of $H_\Omega(t)$, as we need to add the operator multiplication by $\Delta\delta$.
\end{rmk}

\section{An alternative approach using the heat kernel asymptotics}
\label{sec:asym_u_bd}
As we can see by a first application of Duhamel's principle, cf.\ \eqref{eq:1st_exp_I}, and its iterations, the small-time asymptotics of $u(t,\cdot)\rvert_{\partial\Omega}$, together with uniform estimates on the remainder with respect to $x\in\partial\Omega$, would be enough to determine the asymptotic expansion of the relative heat content, at any order. 

In Theorem \ref{t:limit_u}, we studied the zero-order asymptotics of $u(t,\cdot)\rvert_{\partial\Omega}$. The technique used for its proof does not work at higher-order, since the exponential remainder term in \eqref{eq:ineq_lim} would be unbounded as $t\to0$. In this section, we comment how such an higher-order asymptotics of $u(t,\cdot)\rvert_{\partial\Omega}$ can be obtained exploiting the asymptotic formula for the heat kernel proved in \cite[Thm.\ A]{YHT-2}. 

Let $M$ be a compact \sr manifold and $\Omega\subset M$ an open subset with smooth boundary. For $x\in\partial\Omega$, let us consider $\psi=(z_1,\ldots,z_n)\colon U\rightarrow V$ a chart of privileged coordinates centered at $x$, with $U$ a relatively compact set. Since the heat kernel is exponentially decaying outside the diagonal, cf.\ \eqref{eq:js_est}, 
\begin{equation}
\label{eq:u_loc}
\begin{split}
u(t,x)&=\int_\Omega p_t(x,y)d\omega(y)=\int_{\Omega\cap U}p_t(x,y)d\omega(y)+O(t^\infty)\\&=\int_{V_1}p_t(0,z)d\omega(z)+O(t^\infty),
\end{split}
\end{equation}
%Therefore, we focus on the quantity:
%\begin{equation}
%\label{eq:u_loc}
%u_{loc}(t,x)=\int_{\Omega\cap U}p_t(x,y)d\omega(y)=\int_{V_1}p_t(0,z)d\omega(z),
%\end{equation}
where $V_1=\psi(U\cap\Omega)$, and we denote with the same symbols $\omega$ and $p_t(0,z)$ the coordinate expression of the measure and heat kernel, respectively. For example, if $x\in \partial\Omega$ is non-characteristic, we may choose $\psi$ as in \eqref{eq:bd_priv}, then $V_1=V\cap\{z_1>0\}$. Recall the asymptotic expansion of the heat kernel of Theorem \ref{t:hk_exp}, evaluated in $(0,z)$: for any $m\in\N$ and compact set $K\subset (0,\infty)\times V$,  
\begin{equation}
\label{eq:hk_exp_sec}
|\eps|^\mathcal{Q}p_{\eps^2\tau}(0,\delta_\eps(z))=\hat p_\tau(0,z)+\sum_{i=0}^m \eps^i f_i(\tau,0,z)+o(|\eps|^m),\qquad\text{as }\eps\to 0,
\end{equation}
uniformly as $(\tau,z)\in K$, where $\mathcal{Q}$, $\hat p$ and $f_i$'s are defined in Section \ref{sec:prel}. We will omit the dependance on the center of the privileged coordinates $x$, it being fixed for the moment. At this point, we would like to integrate \eqref{eq:hk_exp_sec} to get information of $u(t,x)$ as $t\to 0$. Proceeding formally, let us choose the parameters $\eps,\tau$ in \eqref{eq:hk_exp_sec} such that:
\begin{equation}
\label{eq:par}
\eps^2\tau=t,\qquad \eps=t^{\frac{\alpha}{2\alpha+1}},\qquad\tau=t^{\frac{1}{2\alpha+1}},
\end{equation}
for some $\alpha>0$ to be fixed. For convenience of notation, set 
\begin{equation}
V_s=\delta_s(V_1)\qquad\forall\,s\in [-1,1],
\end{equation}
then, split the integral over $V_1$ in \eqref{eq:u_loc} in two, so that the first one is computed on $V_\eps$ and the second one is computed on its complement in $V_1$, i.e. $V_1\setminus V_\eps$. Notice that, by usual off-diagonal estimates, see \cite[Prop.\ 3]{JS-estimates} and our choice of the parameter $\eps$ as in \eqref{eq:par}, the following is a remainder term, independently of the value of $\alpha$:
\begin{equation}
\int_{V_1\setminus V_\eps}p_t(0,z)d\omega(z)=O\left(e^{-\beta\frac{\eps^2}{t}}\right)=O(t^\infty),\qquad\text{as }t\to0.
\end{equation} 
Thus, writing the measure in coordinates $d\omega(z)=\omega(z)dz$ with $\omega(\cdot)\in C^\infty(V_1)$, we have, as $t\to0$,
\begin{flalign}
u(t,x) &=\int_{V_\eps}p_t(0,z)\omega(z)dz+O(t^\infty)=\int_{V_1}\eps^Qp_{\eps^2\tau}(0,\delta_\eps(z))\omega(\delta_\eps(z))dz+O(t^\infty)\\
						 &=\int_{V_1}\left(\hat p_\tau(0,z)+\sum_{i=0}^{m-1} \eps^i f_i(\tau,0,z)+\eps^mR_m(\eps,\tau,z)\right)\omega(\delta_\eps(z))dz+O(t^\infty),\qquad\label{eq:u_loc_exp}
\end{flalign}
where $R_m$ is a smooth function on $[-1,1]\times (0,\infty)\times \R^n$, such that
\begin{equation}
\label{eq:Rm_est}
\sup_{\substack{\eps\in[-1,1] \\ z\in K}}\left|R_m(\eps,\tau,z)\right|\leq C_m(\tau,K),
\end{equation}
for any compact set $K\subset\R^n$, according to \eqref{eq:hk_exp_sec}. Up to restricting the domain of privileged coordinates $U$, we can assume that \eqref{eq:Rm_est} holds on $\overline V$. By our choices \eqref{eq:par}, we would like the following term
\begin{equation}
\label{eq:remainder}
t^{\frac{m\alpha}{2\alpha+1}}\int_{V_1}\left|R_m\left(t^{\frac{\alpha}{2\alpha+1}},t^{\frac{1}{2\alpha+1}},z\right)\right|\omega(\delta_{t^{\alpha/(2\alpha+1)}}(z))dz
\end{equation}
to be an error term of order greater than $\frac{m-1}{2}$, as $t\to0$. Thus, assume for the moment that $\forall K\subset V$ compact and $\forall m\in\N$, $\exists \ell=\ell(m,K)\in\N$ and $C_m(K)>0$ such that
\begin{equation}
\label{eq:pol_dec}
\sup_{\substack{\eps\in[-1,1] \\ z\in K}}\left|R_m(\eps,\tau,z)\right|\leq \frac{C_m(K)}{\tau^\ell},\qquad\forall\,\tau\in (0,1).\tag{$\mathbf{H}$}
\end{equation}
Thanks to assumption \eqref{eq:pol_dec}, choosing $\alpha$ large enough, we see that \eqref{eq:remainder} is a $o(t^{\frac{m-1}{2}})$. Performing the change of variables $z\mapsto\delta_{1/\sqrt{\tau}}(z)$ in \eqref{eq:u_loc_exp}, and exploiting the homogeneity properties of $\hat p$ and $f_i$, namely \eqref{eq:hom_prop}, we finally obtain the following expression for $u$ as $t\to 0$:
\begin{equation}
\label{eq:final_exp}
u(t,x)=\int_{V_{t^{-1/(2(2\alpha+1))}}}\left(\hat p_1(0,z)+\sum_{i=0}^{m-1} t^{i/2} a_i(z)\right)\omega(\delta_{\sqrt{t}}(z))dz+o(t^{\frac{m-1}{2}}),
\end{equation}
having set $a_i(z)=f_i(1,0,z)$, for all $i\in\N$. Therefore, we find an asymptotic expansion of $u(t,x)$ under assumption \eqref{eq:pol_dec}, which is crucial to overcome the fact that \eqref{eq:hk_exp_sec} is formulated on an asymptotic neighborhood of the diagonal, and not uniformly as $\tau\to 0$. It is likely\footnote{Personal communication by Yves Colin de Verdi\`ere, Luc Hillairet and Emmanuel Tr\'elat.} that \eqref{eq:pol_dec} can be proven in the nilpotent case, and more generally when the ambient manifold is $M=\R^n$ and the generating family of the \sr structure, $\{X_1,\ldots,X_N\}$ satisfies the uniform H\"ormander polynomial condition, see \cite[App.\ B]{YHT-2} for details. Although this strategy could be used to prove the existence of an asymptotic expansion of $H_\Omega(t)$, we refrain to go in this direction since two technical difficulties would arise nonetheless:
\begin{itemize}
\item \emph{Uniformity of the expansion of $u(t,x)$ with respect to $x\in\partial\Omega$}. In the non-equiregular case, cf.\ Section \ref{sec:nilpotent} for details, the expansion \eqref{eq:hk_exp_sec} is not uniform as $x$ varies in compact subsets of $M$, hence the same would be true for the expansion \eqref{eq:final_exp}.
\item \emph{Computations of the coefficients}. The coefficients appearing in \eqref{eq:final_exp} depend on the nilpotent approximation at $x\in\partial\Omega$ and are not clearly related to the invariants of $\partial\Omega$. 
\end{itemize}

Our strategy avoids almost completely the knowledge of the small-time asymptotics of $u(t,\cdot)_{\partial\Omega}$, it being based on an asymptotic series of the auxiliary functional $\G_u$. Moreover, we stress that our method to prove the asymptotics of $H_\Omega(t)$ up to order $4$, cf.\ Theorem \ref{t:intro1}, holds for any \sr manifold, including also the non-equiregular ones.

\begin{rmk}
In order to pass from \eqref{eq:final_exp} to the asymptotic expansion of $H_\Omega(t)$, we would use Duhamel's formula, which holds under the non-characteristic assumption. This means that, even though \eqref{eq:hk_exp_sec} of course is true even in presence of characteristic points, we can't say much about the asymptotics of $H_\Omega(t)$ in the general case.
\end{rmk}

\section{The non-compact case}
\label{sec:loc_noncomp}

In the non-compact case, we have the following difficulties:
\begin{itemize}
\item The localization principle, cf.\ Proposition \ref{prop:easy_loc}, may fail. 
\item Set $u(t,x)=e^{t\Delta}\mathds{1}_{\Omega}(x)$ and $u^c(t,x)=e^{t\Delta}\mathds{1}_{\Omega^c}(x)$. If the manifold is not stochastically complete, the relation $u(t,x)+u^c(t,x) = 1$ does not hold.
\item The Gaussian bounds for the heat kernel and its time-derivatives, \`a la Jerison and Sanchez-Calle \cite[Thm.\ 3]{JS-estimates}, may not hold, thus Lemma \ref{lem:hp_idp_aux} may not be true.
\end{itemize}
%Our goal is to relate $K_\Omega(t)$, defined in \eqref{eq:def_K}, with the outside contribution $\op$. However, if $M$ is not compact, the relation \eqref{eq:loc_rel} does not hold and the localization principle in Proposition \ref{prop:easy_loc} may fail. For what concerns \eqref{eq:loc_rel}, it is enough to study instead the weighted functional $K_\Omega^\phi(t)$, for any $\phi\in C_c^\infty(M)$. Assume that $M$ is stochastically complete, meaning that
%\begin{equation}
%\label{eq:stoc_compl}
%\int_Mp_t(x,y)d\omega(y)=1,\qquad\forall\,t>0,\ x\in M.
%\end{equation}
%Then, reasoning as in \eqref{eq:loc_rel}, we can rewrite $K^\phi_\Omega(t)$ as:
%\begin{equation}
%\label{eq:loc_rel_noncomp}
%\begin{split}
%K^\phi_\Omega(t) &=\int_{M\setminus\Omega}e^{t\Delta}\mathds{1}_\Omega(x)\phi \de\omega(x)=\int_{M\setminus\Omega}\phi(x) d\omega(x)-H^\phi_{\bar\Omega^c}(t),
%\end{split}
%\end{equation}
%where $H^\phi_{\bar\Omega^c}(t)$ is the weighted relative heat content associated with the set $\bar\Omega^c=M\setminus\bar\Omega$. We remark, once again, that the set $M\setminus\bar\Omega$ is not compact.
 %
%Regarding the localization principle, more precisely, what we need to recover is \eqref{eq:loc2}, namely, for functions $\eta_1$, $\eta_2$ as in \eqref{eq:cutoff}, we want to prove that:
%\begin{equation}
%\label{eq:aux_noncomp}
%\int_{\omext{r}}\left(1-e^{t\Delta}\mathds{1}_{\bar\Omega^c}\right)\phi\eta_2 d\omega=\int_{\omext{r}}u(t,\cdot)\phi\eta_2=O(t^\infty),\qquad\text{as }t\to0,
%\end{equation}
%where $\phi\in C_c^\infty(M)$ and $\omext{r}$ is unbounded.

\begin{defn}
\label{def:loc_space}
Let $M$ be a \sr manifold, equipped with a smooth measure $\omega$. We say that $(M,\omega)$ is \emph{(globally) doubling} if there exist constants $C_D>0$ such that:
\begin{equation}
V(x,2\rho)\leq C_D V(x,\rho),\qquad\forall \,\rho>0, \ x\in M,
\end{equation}
where $V(x,\rho)=\omega(B_{\rho}(x))$. We say that $(M,\omega)$ satisfies a \emph{(global) weak Poincar\'e inequality}, if there exist constants $C_P>0$ such that,
\begin{equation}
\int_{B_\rho(x)}\left|f-f_{x,\rho}\right|^2d\omega\leq C_P\rho^2\int_{B_{2\rho}(x)}\|\nabla f\|^2d\omega,\qquad \rho>0,\ x\in M,
\end{equation}
for any smooth function $f\in C^\infty(M)$. Here $f_{x,\rho}=\frac{1}{V(x,\rho)}\int_{B_\rho(x)}fd\omega$.
We refer to these properties as local whenever they hold for any $\rho<\rho_0$.
\end{defn} 

\begin{rmk}
\label{rmk:stoc_compl}
If $M$ is a \sr manifold, equipped with a smooth globally doubling measure $\omega$, then it is stochastically complete, namely
\begin{equation}
\label{eq:stoc_compl}
\int_Mp_t(x,y)d\omega(y)=1,\qquad\forall\,t>0,\ x\in M.
\end{equation}
This is a straightforward consequence of the characterization given by \cite[Thm.\ 4]{MR1301456} on the volume growth of balls. 
\end{rmk}

\begin{thm}
\label{thm:doub_poincare}
Let $M$ be a complete \sr manifold, equipped with a smooth measure $\omega$. Assume that $(M,\omega)$ is globally doubling and satisfies a global weak Poincar\'e inequality. Then, there exist constants $C_k, c_k>0$, for any integer $k\geq 0$, depending only on $C_D,C_P$, such that, for any $x,y\in M$ and $t>0$,
\begin{equation}
\label{eq:offdiag_der}
|\partial_t^kp_t(x,y)|\leq \frac{C_kt^{-k}}{V(x,\sqrt{t})}\exp\left(-\frac{d_\mathrm{SR}^2(x,y)}{c_k t}\right),
\end{equation}
where we recall $V(x,\sqrt{t})=\omega(B_{\sqrt{t}}(x))$.

In addition, there exists constants $C_\ell,c_\ell>0$, depending only on $C_D,C_P$, such that, for any $x,y\in M$ and $t>0$,
\begin{equation}
\label{eq:lower_bound}
p_t(x,y)\geq \frac{C_\ell}{V(x,\sqrt{t})}\exp\left(-\frac{d_\mathrm{SR}^2(x,y)}{c_\ell t}\right).
\end{equation}
\end{thm}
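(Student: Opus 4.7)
The plan is to reduce Theorem \ref{thm:doub_poincare} to the classical theory of heat kernels on strongly local Dirichlet spaces with volume doubling and a Poincar\'e inequality. The sub-Laplacian $\deltasr$ is the generator of the strongly local, regular, symmetric Dirichlet form $\mathcal{E}(f,g)=\int_M g(\nabla f,\nabla g)\,d\omega$ on $L^2(M,\omega)$, essentially self-adjoint on $C_c^\infty(M)$ by the completeness of $(M,d_\mathrm{SR})$, cf.\ \cite{MR862049}. The intrinsic pseudo-distance of $\mathcal{E}$ coincides with $d_\mathrm{SR}$, and once this identification is made, the Sturm/Saloff-Coste/Grigor'yan machinery applies verbatim. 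Under the standing hypotheses of global doubling and a global weak Poincar\'e inequality, their equivalence theorems yield at once the matching two-sided Gaussian estimate
\[
\frac{c_\ell}{V(x,\sqrt{t})}\exp\!\left(-\frac{d_\mathrm{SR}^2(x,y)}{c_\ell t}\right)\leq p_t(x,y)\leq \frac{C_0}{V(x,\sqrt{t})}\exp\!\left(-\frac{d_\mathrm{SR}^2(x,y)}{c_0 t}\right),
\]
which is precisely the $k=0$ case of \eqref{eq:offdiag_der} together with the lower bound \eqref{eq:lower_bound}, with constants depending only on $C_D$ and $C_P$.

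To handle the time derivatives $k\geq 1$, I would exploit the holomorphy of the heat semi-group. Since $\deltasr$ is non-positive self-adjoint, $e^{z\deltasr}$ extends to a bounded analytic semi-group on $\{\mathrm{Re}\,z>0\}$, and a Davies--Grigor'yan-type perturbation argument shows that the upper Gaussian estimate persists, with slightly enlarged constants, on a complex sector $|\arg z|<\theta$ for some $\theta=\theta(C_D,C_P)\in(0,\pi/2)$. Applying Cauchy's integral formula on the circle $|z-t|=\tfrac12 t\sin\theta$,
\[
|\partial_t^k p_t(x,y)|\leq \frac{k!}{(t\sin\theta/2)^k}\max_{|z-t|=t\sin\theta/2}|p_z(x,y)|,
\]
and using doubling to compare $V(x,\sqrt{|z|})$ with $V(x,\sqrt{t})$ up to a factor depending on $C_D$ alone, yields \eqref{eq:offdiag_der} with constants $C_k,c_k$ depending only on $k,C_D,C_P$.

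The main obstacle is not analytic but foundational: one must verify that the abstract Dirichlet-form framework applies to the generating-frame formalism of Section \ref{sec:prel}, including the rank-varying case. The delicate point is identifying the intrinsic distance of $\mathcal{E}$ with $d_\mathrm{SR}$, so that the doubling and Poincar\'e hypotheses formulated on $(M,\omega,d_\mathrm{SR})$ coincide with those required by the abstract theory; this rests on the variational characterization of $d_\mathrm{SR}$ as the supremum of $|f(x)-f(y)|$ over $f\in C^\infty(M)$ with $\|\nabla f\|_g\leq 1$ a.e., a fact that is standard in the constant-rank situation but which must be checked to remain valid in the rank-varying setting considered here. Once this identification is secured, everything else is a direct consequence of well-established abstract results.
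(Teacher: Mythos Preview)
Your proposal is correct and follows essentially the same route as the paper: both reduce to the abstract Dirichlet-form framework, verify that the intrinsic distance of the form $\mathcal{E}(f,g)=\int_M g(\nabla f,\nabla g)\,d\omega$ coincides with $d_\mathrm{SR}$ (the paper dispatches this via \cite[Ch.\ 2, Prop.\ 12.4]{MR3470142}, valid also in the rank-varying case), and then invoke the Sturm/Saloff-Coste equivalence theorems. The only noticeable difference is in the handling of the time-derivative bounds: the paper simply cites \cite[Thm.\ 4.3]{MR1150597}, which already contains the $\partial_t^k$ estimates, whereas you spell out the underlying mechanism via analytic extension to a sector and Cauchy's integral formula---this is precisely the argument behind such results and buys you a self-contained derivation, at the cost of a few extra lines.
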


\begin{proof}
Define the \sr Hamiltonian as the smooth function $H : T^*M \to \R$,
\begin{equation}
H(\lambda) = \frac{1}{2}\sum_{i=1}^N \langle \lambda, X_i \rangle^2, \qquad \lambda \in T^*M,
\end{equation}
where $\{X_1,\ldots,X_N\}$ is a generating family for the sub-Riemannian structure. Then, following the notations of \cite{MR1387522}, one can easily verify that 
\begin{equation}
\mathcal{E}(u,v)=\int_M 2H(du,dv)d\omega,\qquad\forall\, u,v\in C_c^\infty(M),
\end{equation}
where $H$ is the \sr Hamiltonian viewed as a bilinear form on fibers, defines a strongly local Dirichlet form with domain $\mathrm{dom}(\mathcal{E})=C_c^\infty(M)$. Notice that the Friedrichs extension of $\mathcal{E}$ is exactly the sub-Laplacian, moreover, the intrinsic metric 
\begin{equation}
d_I(x,y)=\sup\{|u(x)-u(y)| \text{ s.t. }u\in C_c^\infty(M),\, |2H(du,du)|\leq 1\},\qquad\forall x,y\in M. 
\end{equation}
coincides with the usual \sr distance, as $|2H(du,du)|=\|\nabla u\|^2$, cf.\\ \cite[Ch.\ 2, Prop.\ 12.4]{MR3470142}. Thus, $\mathcal{E}$ is also strongly regular and, by our assumptions on $(M,\omega)$, \cite[Thm.\ 4.3]{MR1150597} holds true, proving \eqref{eq:offdiag_der}. 
For the Gaussian lower bound \eqref{eq:lower_bound}, it is enough to apply \cite[Cor.\ 4.10]{MR1387522}, cf.\ also \cite[Thm.\ 4.2]{MR1150597}. This concludes the proof.  
\end{proof}

\begin{rmk}
Theorem \ref{thm:doub_poincare} ensures that the time-derivatives of the heat kernel satisfy Gaussian bounds, which are sufficient to prove Lemma \ref{lem:hp_idp_aux} in the non-compact case. This lemma is crucial to obtain the asymptotics expansion of $H_\Omega(t)$ at order \emph{strictly greater} than $1$.
\end{rmk}

We prove now the non-compact analogue of Proposition \ref{prop:easy_loc}. 
 
\begin{cor}
\label{cor:noncomp_loc}
Under the assumptions of Theorem \ref{thm:doub_poincare}, let $\Omega\subset M$ be an open subset with smooth boundary. Then, for any $K\subset M$ closed subset of $M$ such that $K\cap\partial\Omega=\emptyset$, we have:
\begin{equation}
\mathds{1}_\Omega(x)-u(t,x)=O(t^\infty),\qquad\text{as }t\to 0,\quad\text{uniformly for }x\in K,
\end{equation}
where $u(t,x)=e^{t\Delta}\mathds{1}_\Omega(x)$ is the solution to \eqref{eq:cauchy_prob}.
\end{cor}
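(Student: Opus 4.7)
The plan is to mimic the structure of Proposition \ref{prop:easy_loc}, replacing the compactness-based estimates by the Gaussian bounds of Theorem \ref{thm:doub_poincare} together with the global doubling property. Since $K$ is compact and disjoint from $\partial\Omega$, I would first decompose $K = K_1 \sqcup K_2$ with $K_1 \subset \Omega$ and $K_2 \subset M \setminus \overline\Omega$, and set $a_i = d_{\mathrm{SR}}(K_i, \partial\Omega) > 0$. By Remark \ref{rmk:stoc_compl}, the manifold is stochastically complete, so for $x \in K_1$ one has $|\mathds{1}_\Omega(x) - u(t,x)| = \int_{M \setminus \Omega} p_t(x,y) d\omega(y)$, while for $x \in K_2$ one has $|\mathds{1}_\Omega(x) - u(t,x)| = \int_\Omega p_t(x,y) d\omega(y)$. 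The goal is to bound each of these by $O(t^\infty)$ uniformly.

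The case $x \in K_2$ is the easier one: since $\Omega$ is bounded and $d_{\mathrm{SR}}(x,y) \geq a_2$ for all $y \in \Omega$, the upper bound \eqref{eq:offdiag_der} with $k=0$ gives
\begin{equation}
u(t,x) \leq \frac{C_0\,\omega(\Omega)}{V(x,\sqrt t)} \exp\!\left(-\frac{a_2^2}{c_0 t}\right).
\end{equation}
The remaining task is to control $V(x,\sqrt t)^{-1}$: iterating doubling downward from a fixed scale $r_0$ gives $V(x,\sqrt t) \geq C_D^{-1}(\sqrt t/r_0)^Q V(x,r_0)$ with $Q = \log_2 C_D$, and a standard finite-cover argument, again using doubling, yields $\inf_{x \in K_2} V(x,r_0) > 0$. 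This leaves a polynomial factor $t^{-Q/2}$, which is defeated by the Gaussian factor, producing $O(t^\infty)$ uniformly on $K_2$.

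The case $x \in K_1$ is the main obstacle, since $\omega(M \setminus \Omega)$ is typically infinite, so one cannot simply factor out the volume of the integration domain as was done in the compact case. I would handle it via a dyadic decomposition into annuli $E_n = \{y \in M \setminus \Omega : a_1 2^n \leq d_{\mathrm{SR}}(x,y) < a_1 2^{n+1}\}$ for $n \geq 0$. On $E_n$ the Gaussian factor in \eqref{eq:offdiag_der} is at most $\exp(-4^n a_1^2/(c_0 t))$, while $\omega(E_n) \leq V(x, a_1 2^{n+1}) \leq C_D^{n+1} V(x, a_1) \leq C_D^{n+2}(a_1/\sqrt t)^Q V(x,\sqrt t)$ by iterated doubling (both upward from $\sqrt t$ and upward from $a_1$). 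Crucially, $V(x,\sqrt t)$ cancels, yielding
\begin{equation}
\int_{E_n} p_t(x,y) d\omega(y) \leq C\, C_D^{n+2} (a_1/\sqrt t)^Q \exp\!\left(-\frac{4^n a_1^2}{c_0 t}\right),
\end{equation}
with constants depending only on $a_1$, $C_0$, $c_0$, $C_D$. Summing over $n \geq 0$, the $n=0$ term dominates and the polynomial factor $t^{-Q/2}$ is absorbed by the exponential, yielding $O(t^\infty)$ uniformly in $x \in K_1$. Combining the two cases concludes the argument.
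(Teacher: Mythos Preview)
Your argument is correct, and the dyadic annulus decomposition for the $K_1$ case is a genuinely different route from the paper's. The paper handles the problematic integral $\int_{M\setminus\Omega} p_t(x,y)\,d\omega(y)$ by peeling off only a fixed fraction of the Gaussian, writing
\[
\exp\!\left(-\frac{d_{\mathrm{SR}}^2(x,y)}{c_0 t}\right)
\leq \exp\!\left(-\frac{a^2(2^L-1)}{2^L c_0 t}\right)\exp\!\left(-\frac{d_{\mathrm{SR}}^2(x,y)}{2^L c_0 t}\right),
\]
and then comparing the remaining integral $\int_M V(x,\sqrt t)^{-1}\exp(-d_{\mathrm{SR}}^2/(2^L c_0 t))\,d\omega$ to $\int_M p_{\tilde t}(x,y)\,d\omega(y)\leq 1$ via the \emph{lower} Gaussian bound \eqref{eq:lower_bound} and doubling. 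This is slicker and treats both $K_1$ and $K_2$ in a single stroke, but it uses the full strength of Theorem~\ref{thm:doub_poincare}. Your approach, by contrast, uses only the upper bound and doubling: the annulus volumes $\omega(E_n)\leq V(x,a_1 2^{n+1})$ are controlled purely by iterated doubling, the $V(x,\sqrt t)$ factor cancels, and the dyadic sum converges by the super-exponential decay in $n$. This is more hands-on but conceptually more elementary, since the lower heat kernel bound is never invoked. One minor remark: the uniform positivity $\inf_{x\in K_2} V(x,r_0)>0$ is not really a finite-cover argument but follows directly from doubling and comparison with a single fixed ball, as in the standard derivation of \eqref{eq:doub_equiv}.
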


\begin{proof}
Let us assume that $K\subset \Omega$ such that $K\cap\partial\Omega=\emptyset$. The other part of the statement can be done similarly. 

Since $M$ is stochastically complete, cf.\ Remark \ref{rmk:stoc_compl}, for any $x\in K$, we can write:
\begin{equation}
\mathds{1}_\Omega(x)-u(t,x)=1-e^{t\Delta}\mathds{1}_\Omega(x)=e^{t\Delta}1(x)-e^{t\Delta}\mathds{1}_\Omega(x)=\int_{M\setminus\Omega}p_t(x,y)d\omega(y).
\end{equation}
Thanks to Theorem \ref{thm:doub_poincare}, we can apply \eqref{eq:offdiag_der} for $k=0$ obtaining
\begin{equation}
\int_{M\setminus\Omega}p_t(x,y)d\omega(y)\leq \int_{M\setminus\Omega}\frac{C_0}{V(x,\sqrt{t})}\exp\left(-\frac{d_\mathrm{SR}^2(x,y)}{c_0 t}\right)d\omega(y),
\end{equation}
for suitable constants $C_0,c_0>0$ not depending on $x,y\in M$, $t>0$. Now, fix $L>1$: since $K\subset \Omega$ is closed with empty intersection with $\partial\Omega$, and thus well-separated from $\partial\Omega$, we deduce there exists $a=a(K)>0$ such that $d_\mathrm{SR}(x,y)>a$ for any $x\in K,\, y\in M\setminus\Omega$, and so 
\begin{align}
\int_{M\setminus\Omega}p_t(x,y)d\omega(y) &\leq \int_{M\setminus\Omega}\frac{C_0}{V(x,\sqrt{t})}\exp\left(-\frac{d_\mathrm{SR}^2(x,y)}{c_0 t}\right)d\omega(y)\\
																					&\leq \exp\left(-\frac{C(a,L)}{c_0t}\right)\int_{M\setminus\Omega}\frac{C_0}{V(x,\sqrt{t})}\exp\left(-\frac{d_\mathrm{SR}^2(x,y)}{2^Lc_0 t}\right)d\omega(y),\qquad\label{eq:exponential_int}
\end{align}
where $C(a,L)=\frac{a^2(2^L-1)}{2^L}>0$. Thus, if we prove that the integral in \eqref{eq:exponential_int} is finite, we conclude. Firstly, recall the Gaussian lower bound \eqref{eq:lower_bound}, which holds thanks to Theorem \ref{thm:doub_poincare}:
\begin{equation}
\label{eq:lower_bound_aux}
p_t(x,y)\geq \frac{C_\ell}{V(x,\sqrt{t})}\exp\left(-\frac{d_\mathrm{SR}^2(x,y)}{c_\ell t}\right).
\end{equation}
for suitable constants constants $C_\ell,c_\ell>0$, not depending on $x,y\in M$, $t>0$. Secondly, by the doubling property of $\omega$, it is well-known that there exists $C_D',s>0$ depending only on $C_D$ such that
\begin{equation}  
\label{eq:doub_equiv}
V(x,R)\leq C_D'\left(\frac{R}{\rho}\right)^sV(x,\rho),\qquad\forall\, \rho\leq R.
\end{equation}
Therefore, choosing $L>1$ so big that $\tilde c^2=(2^Lc_0)/c_\ell>1$ and applying \eqref{eq:doub_equiv} for $\rho=\sqrt{t}$ and $R=\tilde c \sqrt{t}$, we have $R>\rho$ and
\begin{equation}  
\label{eq:doub_equiv2}
V\left(x,\tilde c\sqrt{t}\right)\leq \tilde C V(x,\sqrt{t}),\qquad\forall\, t>0,
\end{equation}
having denoted by $\tilde C=C_D'\tilde c^s>0$. Finally, using \eqref{eq:doub_equiv2} and the Gaussian lower bound \eqref{eq:lower_bound_aux}, we can estimate the integral in \eqref{eq:exponential_int} as follows:
%\begin{align}
%\int_{M\setminus\Omega}\frac{1}{V(x,\sqrt{t})}&\exp\left(-\frac{d_\mathrm{SR}^2(x,y)}{2^Lc_0 t}\right)d\omega(y) \\
																							%&\leq\int_M\frac{\tilde C}{V(x,\tilde c\sqrt{t})}\exp\left(-\frac{d_\mathrm{SR}^2(x,y)}{c_\ell\tilde{c}t}\right)d\omega(y)\\
																							%&\leq\frac{\tilde C}{C_\ell}\int_M p_{\tilde t}(x,y)d\omega(y)\\
																							%&\leq\frac{\tilde C}{C_\ell},
%\end{align}
\begin{multline}
\int_{M\setminus\Omega}\frac{1}{V(x,\sqrt{t})}\exp\left(-\frac{d_\mathrm{SR}^2(x,y)}{2^Lc_0 t}\right)d\omega(y) \\
																							\leq\int_M\frac{\tilde C}{V(x,\tilde c\sqrt{t})}\exp\left(-\frac{d_\mathrm{SR}^2(x,y)}{c_\ell\tilde{c}t}\right)d\omega(y)
																					\leq\frac{\tilde C}{C_\ell}\int_M p_{\tilde t}(x,y)d\omega(y)
																							\leq\frac{\tilde C}{C_\ell},
\end{multline}
where $\tilde t=\tilde c t$. Since the resulting constant does not depend on $x\in K$, we conclude the proof. 
\end{proof}

Using Corollary \ref{cor:noncomp_loc} and adopting the same strategy of the compact case, one can finally prove the following result. 
\begin{thm}
\label{thm:final_result}
Let $M$ be a complete \sr manifold, equipped with a smooth measure $\omega$. Assume that $(M,\omega)$ is globally doubling and satisfies a global weak Poincar\'e inequality. Let $\Omega\subset M$ be an open and bounded subset whose boundary is smooth and has no characteristic points. Then, as $t\to0$,
\begin{equation}
H_\Omega(t) =\omega(\Omega)-\frac{1}{\sqrt\pi}\area(\partial\Omega) t^{1/2}-\frac{1}{12\sqrt\pi}\int_{\partial\Omega}\left(2g(\nabla\deltas,\nabla(\Delta\deltas ))-(\Delta\deltas)^2 \right)d\area \, t^{3/2}+o(t^{2}).
\end{equation} 
\end{thm}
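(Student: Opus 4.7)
The plan is to show that every step of the proof of Theorem \ref{t:fourth_ord} goes through verbatim in the non-compact setting, compactness of $M$ having been used in the compact argument only through three analytic facts: off-diagonal and time-derivative Gaussian bounds on the heat kernel, stochastic completeness of the heat semigroup, and the localization principle. All three are supplied by the doubling plus weak Poincar\'e hypothesis, so the remainder of the compact argument, which is combinatorial and local around $\partial\Omega$, requires no change.

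First I would assemble the analytic inputs. Theorem \ref{thm:doub_poincare} provides upper Gaussian bounds on $\partial_t^k p_t$ for all $k\geq 0$ together with a matching Gaussian lower bound, which replace the Jerison--S\'anchez-Calle estimates used in the compact proof (both in Theorem \ref{t:limit_u} and in the auxiliary Lemma \ref{lem:hp_idp_aux} underlying the Duhamel iterations). By Remark \ref{rmk:stoc_compl}, the doubling assumption implies stochastic completeness, hence $1-u^c(t,x)=u(t,x)$ for all $t>0$ and $x\in M$; this is precisely the identity that drives Proposition \ref{prop:asym_rel_tmp}, and in particular, for a cutoff $\phi$ equal to $1$ near $\partial\Omega$, yields $I\phi(t,0)-\op\phi(t,0)=O(t^\infty)$. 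Finally, Corollary \ref{cor:noncomp_loc} replaces Proposition \ref{prop:easy_loc}: since $\Omega$ is bounded and $\partial\Omega$ has no characteristic points, the tubular neighborhood $\omprimeNice{\smallpar}$ is relatively compact and cutoff functions as in \eqref{eq:cutoff} exist, delivering the localization
\begin{equation}
\omega(\Omega)-H_\Omega(t)=I\phi(t,0)+O(t^\infty),\qquad\text{as }t\to 0.
\end{equation}

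Second I would verify that the zero-order boundary asymptotic of Theorem \ref{t:limit_u} persists: its proof uses only the off-diagonal heat kernel bound, the upper Gaussian bound (to control \eqref{eq:gauss_bd}), the lower Gaussian bound (to bound \eqref{eq:lb_est}), and the Ball-Box comparison with the nilpotent approximation distance $\hat d$, which is purely local. All of these are available under the present hypotheses, so $u(t,x)\to 1/2$ for every non-characteristic $x\in\partial\Omega$, and by dominated convergence the first-order expansion of $\G_u$ in Corollary \ref{lem:limit_G} holds. From here the argument is formal: Proposition \ref{prop:mean_val} requires only compact support of the test function, Lemma \ref{lem:neg_duham} is a one-dimensional PDE fact, the inside/outside relation (Proposition \ref{prop:asym_rel_tmp}) and its consequence \eqref{eq:H_sum} go through with identical proofs, and the iterated Duhamel expansion (Lemma \ref{lem:formal_exp_sum}) combined with the $\G_{1-2u}$ asymptotic of Theorem \ref{t:asymp_G} produces the claimed fourth-order expansion.

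To conclude, I would take $\phi\in\extfunsp$ identically $1$ near $\partial\Omega$, combine stochastic completeness with the localization above to obtain
\begin{equation}
\omega(\Omega)-H_\Omega(t)=\tfrac{1}{2}\bigl(I\phi(t,0)+\op\phi(t,0)\bigr)+O(t^\infty),
\end{equation}
insert the sum expansion \eqref{eq:expr_sum4}, and substitute $\G_{1-2u}[\psi](t)=-\tfrac{1}{4}\int_{\partial\Omega}\psi\,\Delta\deltas\, d\area\, t+o(t^{3/2})$ from Theorem \ref{t:asymp_G}; after simplification one recovers \eqref{eq:fourth_ord}. The main obstacle is not conceptual but bookkeeping: one must verify that every place where the compact proof silently invoked uniform Gaussian estimates continues to have the required uniformity via Theorem \ref{thm:doub_poincare}, and that the $O(t^\infty)$ remainders produced by the cutoff remain uniform against the iterated time integrations in the Duhamel expansion.
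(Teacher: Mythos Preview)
Your proposal is correct and matches the paper's approach exactly: the paper states only that the result follows ``using Corollary \ref{cor:noncomp_loc} and adopting the same strategy of the compact case,'' and you have faithfully unpacked what that entails, identifying the three analytic ingredients (Gaussian bounds via Theorem \ref{thm:doub_poincare}, stochastic completeness via Remark \ref{rmk:stoc_compl}, and localization via Corollary \ref{cor:noncomp_loc}) that substitute for compactness, together with the remarks scattered through the paper confirming that Theorem \ref{t:limit_u}, Proposition \ref{prop:mean_val}, and Lemma \ref{lem:hp_idp_aux} extend under these hypotheses.
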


\begin{rmk}
Theorem \ref{thm:final_result} holds true also for the weighted relative heat content, cf.\ Section \ref{sec:weighted}. In both cases, we do not know whether its assumptions are sharp in the non-compact case. 
\end{rmk}

\subsection{Notable examples}
\label{sec:examples}
We list here some notable examples of \sr manifolds satisfying the assumptions of Theorem \ref{thm:doub_poincare}. For these examples Theorem \ref{thm:final_result} is valid.

\begin{itemize}
\item $M$ is a Lie group with polynomial volume growth, the distribution is generated by a family of left-invariant vector fields satisfying the H\"{o}rmander condition and $\omega$ is the Haar measure. This family includes also Carnot groups. See for example \cite{MR1435484,MR1150597,MR3008437}.
\item $M=\R^n$, equipped with a \sr structure induced by a family of vector fields $\{Y_1,\ldots,Y_N\}$ with bounded coefficients together with their derivatives, and satisfying the H\"{o}rmander condition. Under these assumptions, the Lebesgue measure is doubling, cf.\ \cite[Thm.\ 1]{MR793239}, and the Poincar\'e inequality is verified in \cite{MR850547}. We remark that these works provide the local properties of Definition \ref{def:loc_space}, with constants depending only on the $C^k$-norms of the vector fields $Y_i$, for $i=1,\ldots,N$. Thus, if the $C_k$-norms are globally bounded, we obtain the corresponding global properties.  
\item $M$ is a complete Riemannian manifold with metric $g$, equipped with the Riemannian measure, and with non-negative Ricci curvature.
\end{itemize}
We mention that a Riemannian manifold $M$ with Ricci curvature bounded below by a negative constant satisfies only locally Definition \ref{def:loc_space}, i.e. for some $\rho_0<\infty$, depending on the Ricci bound. Nevertheless, we can prove Corollary \ref{cor:noncomp_loc} in this case, as Li and Yau provides an upper Gaussian bound, see \cite[Cor.\ 3.1]{MR834612}, and a lower bound as \eqref{eq:lower_bound} holds, cf. \ \cite[Cor.\ 2]{MR1681640}. Thus, the first-order asymptotic expansion of $H_\Omega(t)$, cf.\ Theorem \ref{thm:1st_exp_H}, is valid in this setting.

\appendix

\section{Iterated Duhamel's principle for \texorpdfstring{$I_\Omega\phi(t,0)$}{IOmegaphi(t,0)}}\label{app:computations}
In this section, we study the iterated Duhamel's principle for the $I_\Omega\phi$, cf.\ Definition \ref{def:ILambda_om}. The main result is Lemma \ref{lem:aux_exp2}, which will imply formulas \eqref{eq:expr_I}, \eqref{eq:diff_exp_aux} and \eqref{eq:expr_sum4}.

The next proposition is a version of the iterated Duhamel's principle taken from \cite[Prop.\ A.1]{MR4223354}, which we recall here. 
\begin{prop}
\label{prop:iter_duhamel}
Let $F\in C^\infty((0,\infty)\times[0,+\infty))$ be a smooth function compactly supported in the second variable and let $L=\partial_t-\partial_{r}^2$. Assume that the following conditions hold:
\begin{itemize}
\item[(i)] $\displaystyle L^kF(0,r)=\lim_{t\to 0}L^kF(t,r)$ exists in the sense of distributions\footnote{Namely, for any $\psi\in C^\infty([0,\infty))$, there exists finite $\lim_{t\to 0}\int_0^\infty f(t,r)\psi(r)dr$. With a slight abuse of notation, we define $\int_0^\infty f(0,r)\psi(r)dr=\lim_{t\to 0}\int_0^\infty f(t,r)\psi(r)dr$.} for any $k\geq 0$; 
\item[(ii)] $L^kF(t,0)$ and $\partial_rL^kF(t,0)$ converge to a finite limit as $t\to 0$, for any $k\geq 0$.
\end{itemize}
Then, for all $m\in\N$ and $t>0$, we have 
\begin{multline}
\label{eq:iter_duhamel}
F(t,0)=\sum_{k=0}^m\left(\frac{t^k}{k!}\int_0^\infty e(t,r,0)L^kF(0,r)dr-\frac{1}{\sqrt{\pi}k!}\int_0^t\partial_rL^kF(\tau,0)(t-\tau)^{k-1/2}d\tau\right)
\\
+\frac{1}{m!}\int_0^t\int_0^\infty e(t-\tau,r,0)L^{m+1}F(\tau,r)(t-\tau)^mdr \de\tau,\qquad \
\end{multline}
where $e(t,r,s)$ is the Neumann heat kernel on the half-line, cf.\ \eqref{eq:neumanheat}.
\end{prop}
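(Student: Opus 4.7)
The plan is to proceed by induction on $m$, with the base case $m=0$ being exactly the one-dimensional Duhamel formula already established as Lemma \ref{lem:neg_duham}. Indeed, viewing $F$ as the solution of the non-homogeneous Neumann problem on the half-line with source $LF$, initial datum $F(0,\cdot)$, and Neumann boundary datum $\partial_r F(\cdot,0)$, the representation \eqref{eq:duham_prin} evaluated at $r=0$ gives the $m=0$ case (noting that $e(s,0,0)=1/\sqrt{\pi s}$). The hypotheses (i)--(ii) ensure that the boundary/initial values appearing in Duhamel's formula make sense.

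For the inductive step I plan to introduce the auxiliary quantity
\begin{equation}
\Phi_k(\tau)=\int_0^\infty e(t-\tau,r,0)L^kF(\tau,r)\,dr,
\end{equation}
so that the remainder at step $m$ reads $R_m=\frac{1}{m!}\int_0^t (t-\tau)^m\Phi_{m+1}(\tau)\,d\tau$. The key algebraic identity will be
\begin{equation}
\Phi_{k+1}(\tau)=\frac{d}{d\tau}\Phi_k(\tau)-\frac{1}{\sqrt{\pi(t-\tau)}}\partial_r L^k F(\tau,0),
\end{equation}
which I would prove by writing $L^{k+1}F=(\partial_\tau-\partial_r^2)L^kF$ under the integral, using that $e(t-\tau,\cdot,0)$ satisfies the backward heat equation in $(\tau,r)$, and integrating by parts twice in $r$. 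The surface terms at $r=\infty$ vanish because $F$ is compactly supported in the second variable, while at $r=0$ one term vanishes by the Neumann boundary condition $\partial_r e(s,0,0)=0$ and the other produces precisely the boundary contribution $\partial_r L^kF(\tau,0)/\sqrt{\pi(t-\tau)}$.

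Plugging this recurrence into $R_m$ and integrating by parts in $\tau$ in the integral $\int_0^t(t-\tau)^m\Phi_{m+1}'(\tau)d\tau$ yields three terms: the factor $(t-\tau)^{m+1}$ in the boundary term vanishes at $\tau=t$, while at $\tau=0$ it recovers $-\frac{t^{m+1}}{(m+1)!}\int_0^\infty e(t,r,0)L^{m+1}F(0,r)dr$ thanks to (i); the source term from the recurrence produces $-\frac{1}{\sqrt{\pi}(m+1)!}\int_0^t \partial_r L^{m+1}F(\tau,0)(t-\tau)^{m+1/2}d\tau$; and the interior term $\frac{m+1}{(m+1)!}\int_0^t(t-\tau)^m\Phi_{m+1}(\tau)d\tau$ collapses to $R_m$ again after re-indexing. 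Combining these gives $R_m=(S_{m+1}-S_m)+R_{m+1}$, closing the induction.

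The main technical obstacle will be justifying the integration by parts and the $\tau\to 0$ limit of $\Phi_k(0)$ rigorously under only distributional convergence of $L^kF(t,r)$ as $t\to 0$, cf.\ assumption (i). To handle this I plan to test against $\psi(r)=e(t,r,0)$, which is smooth and rapidly decaying on $[0,\infty)$, thus a legitimate test function against which the distributional limit converges, and to exploit (ii) to ensure that the non-integrable factor $(t-\tau)^{-1/2}$ in the boundary contribution is integrated against a bounded quantity. The compact support of $F$ in $r$ together with smoothness in $(0,\infty)\times[0,\infty)$ takes care of all manipulations under the integral sign.
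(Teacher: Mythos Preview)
The paper does not prove this proposition: it is quoted verbatim from \cite[Prop.\ A.1]{MR4223354} and simply ``recalled'' in the appendix, so there is no in-paper argument to compare against. Your inductive scheme---base case $m=0$ from Lemma~\ref{lem:neg_duham}, then iterate by integrating the remainder $R_m$ by parts in $\tau$ using a recurrence for $\Phi_k(\tau)=\int_0^\infty e(t-\tau,r,0)L^kF(\tau,r)\,dr$---is exactly the standard way to obtain such formulas and is essentially correct.

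Two small points of bookkeeping. First, the sign in your recurrence is off: carrying out the two integrations by parts in $r$ as you describe (using $\partial_r e(s,0,0)=0$ and $e(s,0,0)=1/\sqrt{\pi s}$) yields
\[
\Phi_{k+1}(\tau)=\frac{d}{d\tau}\Phi_k(\tau)+\frac{1}{\sqrt{\pi(t-\tau)}}\,\partial_r L^kF(\tau,0),
\]
with a plus, not a minus; this is what is needed to match the sign $-1/(\sqrt{\pi}\,k!)$ in \eqref{eq:iter_duhamel}. Second, your description of the $\tau$-integration by parts is slightly tangled: the clean way is to integrate $R_m=\tfrac{1}{m!}\int_0^t(t-\tau)^m\Phi_{m+1}(\tau)\,d\tau$ by parts with $dv=(t-\tau)^m\,d\tau$, picking up the boundary term $\tfrac{t^{m+1}}{(m+1)!}\Phi_{m+1}(0)$ at $\tau=0$ (here you invoke (i) with test function $e(t,\cdot,0)$), and then substitute the recurrence with $k=m+1$ into $\int_0^t(t-\tau)^{m+1}\Phi_{m+1}'(\tau)\,d\tau$ to produce the $(m{+}1)$-st summand plus $R_{m+1}$. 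With these corrections the induction closes cleanly.
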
 

We want to apply Proposition \ref{prop:iter_duhamel} to the function $I_\Omega\phi(t,0)$, thus, we study in detail the operators $L^kI_\Omega$, for any $k\geq 1$. Define iteratively the family of matrices of operators, acting on smooth functions:
\begin{equation}
M_{kj}=\begin{pmatrix}
Q_{kj}&S_{kj}\\
P_{kj}&R_{kj}
\end{pmatrix},
\end{equation}
as follows. Set
\begin{equation}
M_{10}=
\begin{pmatrix}
\deltasr&-\deltasr N_\Omega\\
N_\Omega&-N_\Omega^2+\deltasr
\end{pmatrix}
\qquad\text{and}\qquad
M_{11}=
\begin{pmatrix}
0&N_\Omega\\
0&0
\end{pmatrix},
\end{equation}
and, for all $k\geq 1$ and $0\leq j \leq k$, set
\begin{equation}
\label{eq:matrix_iter}
M_{kj}=M_{10}M_{k-1,j}+M_{11}M_{k-1,j-1},
\end{equation}
while $M_{kj}=0$, for all other values of the indices, i.e.\ $k<0$, $j<0$ or $k<j$. Here $N_\Omega$ is the operator defined in \eqref{eq:sr_N}, namely
\begin{equation}
\label{eq:sr_N_rem}
N_\Omega\phi= 2g\left(\nabla\phi,\nu\right)+\phi\,\diverg(\nu), \qquad\,\forall\,\phi\in \extfunsp,
\end{equation}
where $\nu$ is the inward-pointing normal from $\Omega$. 

Recall the definition of $I_\Omega$ and $\Lambda_\Omega$: for any $\phi\in\extfunsp$ and for all $t> 0, r\geq 0$, 
\begin{align}
I_\Omega\phi(t,r) &= \int_{\omNice{r}}{\left(1- u(t,x)\right)\phi(x)d\omega(x)},\\
\Lambda_\Omega\phi(t,r) &=-\partial_rI_\Omega\phi(t,r)=-\int_{\bdomNice{r}}{\left(1- u(t,y)\right)\phi(y)d\sigma(y)},
\end{align} 
where $u(t,\cdot)=e^{t\Delta}\mathds{1}_\Omega(\cdot)$. Iterations of $L^kI_\Omega\phi$ satisfy the following lemma. 

\begin{lem}
\label{lem:iter_LI_om}
Let $M$ be a sub-Riemannian manifold, equipped with a smooth measure $\omega$, and let $\Omega \subset M$ be an open relatively compact subset whose boundary is smooth and has no characteristic points. Then, as operators on $C^\infty_c(\Omega_{-\smallpar}^{\smallpar})$, we have:
\begin{itemize}
\item[(i)] $LI_\Omega=I_\Omega\deltasr+\Lambda_\Omega N_\Omega$;
\item[(ii)] $L\Lambda_\Omega=\Lambda_\Omega\left(-N_\Omega^2+\deltasr\right)+\partial_tI_\Omega N_\Omega-I_\Omega\deltasr N_\Omega$;
\item[(iii)] For any $k\in\N$, 
\begin{equation}
\label{eqn:iter_expr}
L^kI_\Omega=\sum_{j=0}^{k}{\frac{\partial^j}{\partial t^j}(\Lambda_\Omega P_{kj}+I_\Omega Q_{kj})}\qquad\text{and}\qquad L^k\Lambda_\Omega=\sum_{j=0}^{k}{\frac{\partial^j}{\partial t^j}(\Lambda_\Omega R_{kj}+I_\Omega S_{kj})}.
\end{equation}
\end{itemize}
Here we mean that, for any $\phi\in\extfunsp$, the operator $L^k$ acts on the functions $I_\Omega\phi(t,r)$, $\Lambda_\Omega\phi(t,r)$. Analogously the right-hand side when evaluated in $\phi$ is a function of $(t,r)$.
\end{lem}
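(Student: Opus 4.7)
Part (i) is exactly the content of Lemma~\ref{lem:LI_om}, so no new work is required. For (ii), the plan is to exploit the relation $\Lambda_\Omega = -\partial_r I_\Omega$ together with the fact that the operators $\partial_r$ and $L = \partial_t - \partial_r^2$ commute (since both $\partial_t$ and $\partial_r^2$ commute with $\partial_r$). Thus
\begin{equation}
L\Lambda_\Omega\phi = -L\partial_r I_\Omega\phi = -\partial_r L I_\Omega\phi = -\partial_r\left(I_\Omega\Delta\phi - \Lambda_\Omega N_r\phi\right) = \Lambda_\Omega\Delta\phi + \partial_r\Lambda_\Omega N_r\phi,
\end{equation}
where I used (i) in the middle equality. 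The crucial observation is that, in the tubular neighborhood $\Omega_{-\smallpar}^{\smallpar}$, the operator $N_r$ is the restriction to $\partial\Omega_r$ of a single globally defined differential operator $N$ acting on functions in $C^\infty(\Omega_{-\smallpar}^{\smallpar})$ and given by $N\phi = 2g(\nabla\phi, \nabla\delta) + \phi\,\Delta\delta$ (as $\nu_r$ coincides, up to sign, with $\nabla\delta$ throughout the tubular neighborhood). Hence $\Lambda_\Omega N_r\phi = \Lambda_\Omega(N\phi)$ as a function of $(t,r)$, and applying (i) to the smooth function $N\phi$ yields
\begin{equation}
\partial_r\Lambda_\Omega N_r\phi = -\partial_r^2 I_\Omega N\phi = -\partial_t I_\Omega N_r\phi + I_\Omega\Delta N_r\phi - \Lambda_\Omega N_r^2\phi.
\end{equation}
Substituting this back gives precisely (ii).

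For (iii) the plan is induction on $k$. The case $k=1$ is a direct check against the explicit matrices $M_{10}$ and $M_{11}$: reading off $P_{1j}, Q_{1j}, R_{1j}, S_{1j}$ from the two matrices and substituting into \eqref{eqn:iter_expr} reproduces (i) and (ii). For the inductive step, assume the formulas hold for $L^k I_\Omega$ and $L^k\Lambda_\Omega$. Since $L$ commutes with every $\partial_t^j$, we compute
\begin{equation}
L^{k+1}I_\Omega\phi = \sum_{j=0}^{k}\partial_t^j\left(L\Lambda_\Omega(P_{kj}\phi) + LI_\Omega(Q_{kj}\phi)\right),
\end{equation}
and then apply (i) to $Q_{kj}\phi$ and (ii) to $P_{kj}\phi$. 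This produces three types of summands: those contributing to $\Lambda_\Omega$ at $t$-order $j$, those contributing to $I_\Omega$ at $t$-order $j$, and a term $-\partial_t I_\Omega N_r P_{kj}\phi$ which, after reindexing $j\to j-1$, contributes to $I_\Omega$ at $t$-order $j$. Collecting the coefficients of $\Lambda_\Omega$ at $t$-order $j$ gives $(-N_r^2+\Delta)P_{kj} - N_r Q_{kj}$, and collecting the coefficients of $I_\Omega$ at $t$-order $j$ gives $\Delta N_r P_{kj} + \Delta Q_{kj} - N_r P_{k,j-1}$. These match exactly the entries $P_{k+1,j}$ and $Q_{k+1,j}$ dictated by the matrix recursion $M_{k+1,j} = M_{10}M_{kj} + M_{11}M_{k,j-1}$, completing the induction for $L^{k+1}I_\Omega$. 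The analogous computation for $L^{k+1}\Lambda_\Omega$ — using (i) on $S_{kj}\phi$ and (ii) on $R_{kj}\phi$ — reproduces the $R$ and $S$ entries in the same way.

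The main obstacle is the bookkeeping in the induction step of (iii): one must carefully track how the single factor of $\partial_t$ introduced by the $-\partial_t I_\Omega N_r$ term in (ii) shifts the outer $t$-derivative index from $j$ to $j+1$, which is exactly what the off-diagonal matrix $M_{11}$ (acting on $M_{k,j-1}$) is designed to encode. Once this correspondence is identified, matching coefficient by coefficient is a mechanical, if slightly tedious, verification; the genuinely non-trivial input is the commutation $[L,\partial_r]=0$ combined with the fact that $N_r\phi$ extends to a well-defined smooth function $N\phi$ on the whole tubular neighborhood, which legitimizes the reapplication of (i) to $N\phi\in C_c^\infty(\Omega_{-\smallpar}^{\smallpar})$.
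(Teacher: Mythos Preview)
Your proposal is correct and follows essentially the same approach as the paper: for (iii) both you and the paper proceed by induction on $k$, encoding (i) and (ii) as $LV = VM_{10} + \partial_t V M_{11}$ for the vector $V=(I_\Omega,\Lambda_\Omega)$ and then reading off the recursion $M_{k+1,j}=M_{10}M_{kj}+M_{11}M_{k,j-1}$ component by component. The only minor difference is in (ii): the paper invokes Proposition~\ref{prop:mean_val} and the divergence theorem directly (citing \cite[Lem.~A.2]{MR4223354}), whereas you derive it from (i) via the commutation $[L,\partial_r]=0$ and the observation that $N_r\phi$ extends to a globally defined smooth function on the tubular neighborhood; both routes are equivalent and equally valid.
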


\begin{proof}
The proof of items $(i)$ and $(ii)$ follows from Proposition \ref{prop:mean_val} and the divergence theorem, cf.\ \cite[Lem.\ A.2]{MR4223354}. We show how to recover the iterative law \eqref{eq:matrix_iter}. 

Consider the vector $V=\left(I_\Omega,\Lambda_\Omega\right)$, then by items $(i)$ and $(ii)$, we have 
\begin{equation}
\label{eq:1st_step}
LV=\left(LI_\Omega,L\Lambda_\Omega\right)=VM_{10}+\partial_tVM_{11}.
\end{equation}
Notice that the operator $L^k$ contains at most $k$ derivatives with respect to $t$, therefore we have
\begin{equation}
L^kV=\sum_{j=0}^k\partial_t^j\left(V M_{kj}\right),\qquad\forall\,k\geq 0,
\end{equation}
On the other hand, we can evaluate $L^kV$, using \eqref{eq:1st_step}:
\begin{align}
L^kV &=L\left(L^{k-1}V\right)=\sum_{j=0}^{k-1}L\partial_t^j\left(V M_{k-1,j}\right)=\sum_{j=0}^{k-1}\partial_t^j\left(LV M_{k-1,j}\right)\\
		 &=\sum_{j=0}^{k-1}\partial_t^jVM_{10}M_{k-1,j}+\sum_{j=0}^{k-1}\partial_t^{j+1}V M_{11}M_{k-1,j}.
\end{align}
Reorganizing the sum, we find \eqref{eq:matrix_iter}, concluding the proof. 
\end{proof}
 
We want to apply Proposition \ref{prop:iter_duhamel} to $I_\Omega\phi(t,r)$ for $k\geq 2$, in order to obtain higher-order asymptotics. However, Lemma \ref{lem:iter_LI_om} shows that $L^kI_\Omega$, for $k\geq 2$, involves time derivatives of $u(t,x)$ which are not well-defined at $\partial\Omega$ as $t\to 0$. Therefore, we consider the following approximation of $I_\Omega\phi$ and $\Lambda_\Omega\phi$, respectively: fix $\epsilon>0$ and define, for any $t>0, r\geq0$,
%\begin{align}
%I_\eps\phi(t,r)=I_{\omNice{\epsilon}}\phi(t,r-\eps) &= \int_{\omNice{r}}\left(1-u_\epsilon(t,x)\right)\phi(x)d\omega(x), \label{eq:aux_ops1}
%\\
%\Lambda_\epsilon\phi(t,r)=\Lambda_{\omNice{\epsilon}}\phi(t,r-\epsilon) &= -\partial_r I_{\omNice{\epsilon}}\phi(t,r-\epsilon)=\int_{\bdomNice{r}}\left(1-u_\epsilon(t,x)\right)\phi(y)d\area(y), \label{eq:aux_ops2}
%\end{align} 
\begin{align}
I_\epsilon\phi(t,r)=&= \int_{\omNice{r}}\left(1-u_\epsilon(t,x)\right)\phi(x)d\omega(x), \label{eq:aux_ops1}
\\
\Lambda_\epsilon\phi(t,r) &= -\partial_r I_\epsilon\phi(t,r)=\int_{\bdomNice{r}}\left(1-u_\epsilon(t,x)\right)\phi(y)d\area(y), \label{eq:aux_ops2}
\end{align} 
where $u_\epsilon(t,x)=e^{t\Delta}\mathds{1}_{\omNice{\epsilon}}(x)$. We recall that, for any $a\in\R$, $\omNice{a} = \{x\in M\mid \deltas(x) >a\}$. Notice that, by the dominated convergence theorem, we have
\begin{equation}
I_\epsilon\phi(t,0)\xrightarrow{\epsilon\to 0}I_\Omega\phi(t,0),\qquad\text{uniformly on }[0,T],
\end{equation} 
and, in addition, Lemma \ref{lem:iter_LI_om} holds unchanged also for $I_\epsilon$ and $\Lambda_\epsilon$.

\begin{lem}
\label{lem:hp_idp_aux}
Let $M$ be a compact sub-Riemannian manifold, equipped with a smooth measure $\omega$, and let $\Omega \subset M$ be an open subset whose boundary is smooth and has no characteristic points. Let $\psi\in C^\infty([0,\infty))$, $\epsilon\in (0,\smallpar)$ and define
\begin{equation}
\psi^{(-1)}(r)=\int_0^r\psi(s)ds,\qquad\forall r\geq 0.
\end{equation}
Then, for any $\phi\in C^\infty_c(\Omega_{-\smallpar}^{\smallpar})$, the following identities hold:
\begin{itemize}
\item[(i)] $\displaystyle\lim_{t\to 0}\int_0^\infty{\frac{\partial^j}{\partial t^j}\Lambda_\epsilon\phi(t,r) \psi(r)dr}=\begin{cases}\displaystyle \int_{\omprimeNice{\epsilon}}{\phi(\psi\circ\deltas)d\omega} & \text{ if }j=0,\\[10pt]
\displaystyle -\int_{\omNice{\epsilon}}{\deltasr^j(\phi(\psi\circ\deltas))d\omega}& \text{ if }j\geq 1;\end{cases}$
\item[(ii)] $\displaystyle\lim_{t\to 0}\int_0^\infty{\frac{\partial^j}{\partial t^j}I_\epsilon\phi(t,r)\psi(r)dr}=\begin{cases}\displaystyle \int_{\omprimeNice{\epsilon}}{\phi\left(\psi^{(-1)}\circ\deltas\right) d\omega}& \text{ if }j=0,\\[10pt]
\displaystyle -\int_{\omNice{\epsilon}}{\deltasr^j\left(\phi\left(\psi^{(-1)}\circ\deltas\right)\right) d\omega}& \text{ if }j\geq 1;\end{cases}$ 
\item[(iii)] $\displaystyle\frac{\partial^j}{\partial t^j}\Lambda_\epsilon\phi(0,0)=\begin{cases}\displaystyle \int_{\partial\Omega}{\phi \de\area}& \text{ if }j=0,\\[10pt]
\displaystyle 0& \text{ if }j\geq 1;\end{cases}$
\item[(iv)] $\displaystyle \frac{\partial^j}{\partial t^j}I_\epsilon\phi(0,0)=\begin{cases}\displaystyle\int_{\omprimeNice{\epsilon}}\phi \de\omega& \text{ if }j=0,\\[10pt]
-\int_{\omNice{\epsilon}}{\deltasr^j\phi \de\omega}& \text{ if }j\geq 1;
\end{cases}$ 
\end{itemize}
where, we recall, $\omNice{\epsilon} = \{x\in M \mid \deltas(x) > \epsilon\}$ and $\omprimeNice{\epsilon}=\Omega\setminus\omNice{\epsilon}$.
\end{lem}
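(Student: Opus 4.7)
The unifying idea is that all four identities reduce to computing $t\to 0$ limits of spatial integrals of $u_\epsilon$ (or its time-derivatives) against smooth test functions. Two facts will do the work: (a) $u_\epsilon$ solves the heat equation, so $\partial_t^j u_\epsilon = \Delta^j u_\epsilon$ for $t>0$; and (b) since $\mathrm{supp}(\mathds{1}_{\omNice{\epsilon}})$ lies at \sr{}distance at least $\epsilon>0$ from $\partial\Omega$, the off-diagonal estimates for time-derivatives of the heat kernel from \cite[Thm.\ 3]{JS-estimates} give $\Delta^k u_\epsilon(t,\cdot)=O(t^\infty)$ as $t\to 0$, uniformly on any fixed neighborhood of $\partial\Omega$ disjoint from $\omNice{\epsilon}$.

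The first step is to reduce items (i) and (ii) to the shape of (iii) and (iv). By Fubini and the co-area formula \eqref{eq:coarea}, I obtain
\begin{equation}
\int_0^\infty \Lambda_\epsilon\phi(t,r)\psi(r)\,dr = \int_\Omega (1-u_\epsilon(t,x))\phi(x)(\psi\circ\deltas)(x)\,d\omega(x),
\end{equation}
and, using that $\omNice{r}=\{\deltas>r\}$ and swapping the order of integration,
\begin{equation}
\int_0^\infty I_\epsilon\phi(t,r)\psi(r)\,dr = \int_\Omega (1-u_\epsilon(t,x))\phi(x)(\psi^{(-1)}\circ\deltas)(x)\,d\omega(x).
\end{equation}
After this reduction, each of the four limits has the form $\lim_{t\to 0}\partial_t^j \int_\Omega (1-u_\epsilon)F\,d\omega$ (or its surface analogue on $\partial\Omega$ for (iii)) for some smooth $F\in\{\phi,\,\phi(\psi\circ\deltas),\,\phi(\psi^{(-1)}\circ\deltas)\}$; each such $F$ is smooth and compactly supported in $\Omega_{-\smallpar}^{\smallpar}$, because $\deltas$ is smooth there by Theorem \ref{thm:sr_tub_neigh}.

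For $j=0$, the claim is immediate from $L^2$-continuity of the heat semigroup ($u_\epsilon(t,\cdot)\to\mathds{1}_{\omNice{\epsilon}}$ in $L^2$) combined with dominated convergence, yielding the integral over $\omprimeNice{\epsilon}$. For $j\geq 1$, I commute $\partial_t^j$ with the spatial integral, invoke $\partial_t^j u_\epsilon = \Delta^j u_\epsilon$, and iterate the divergence theorem on $\Omega$ to move the $j$ copies of $\Delta$ from $u_\epsilon$ onto $F$. Each step leaves behind a boundary integral over $\partial\Omega$ which is a sum of products of normal derivatives of $F$ with factors of the form $\Delta^k u_\epsilon|_{\partial\Omega}$ or $g(\nabla\Delta^k u_\epsilon,\nu)|_{\partial\Omega}$; by (b), every such boundary contribution is $O(t^\infty)$. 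The interior term becomes $-\int_\Omega u_\epsilon\,\Delta^j F\,d\omega$, which by $L^2$-continuity tends to $-\int_{\omNice{\epsilon}}\Delta^j F\,d\omega$. For item (iii) with $j\geq 1$, no integration by parts is needed: the integrand $\Delta^j u_\epsilon\,\phi$ itself vanishes uniformly on $\partial\Omega$ by (b), so the surface integral tends to zero.

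The main technical point is the boundary-term bookkeeping: one must verify that after each of the $j$ integrations by parts, every surviving surface term contains at least one factor $\Delta^k u_\epsilon|_{\partial\Omega}$ or $g(\nabla\Delta^k u_\epsilon,\nu)|_{\partial\Omega}$, so that the decay in (b) applies uniformly. The combinatorics itself is routine, but the crucial conceptual input is the truncation parameter $\epsilon$: the separation $d_{\mathrm{SR}}(\partial\Omega,\omNice{\epsilon})\geq\epsilon>0$ is precisely what supplies the uniform $O(t^\infty)$ decay, and this is why the lemma is formulated for $I_\epsilon,\Lambda_\epsilon$ rather than for $I_\Omega,\Lambda_\Omega$ directly.
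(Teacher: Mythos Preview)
Your approach is correct and follows the same overall strategy as the paper: reduce (i) and (ii) to the shape of (iii)--(iv) via the co-area formula, then handle the $j\geq 1$ case by moving $\Delta^j$ onto the test function via the divergence theorem and showing that the boundary terms vanish in the limit thanks to the separation $d_{\mathrm{SR}}(\partial\Omega,\omNice{\epsilon})\geq\epsilon$.

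There is one point where your treatment diverges from the paper's, and it is worth flagging. You dispose of the boundary terms by asserting that both $\Delta^k u_\epsilon|_{\partial\Omega}$ and $g(\nabla\Delta^k u_\epsilon,\nu)|_{\partial\Omega}$ are $O(t^\infty)$, appealing to your fact (b). As literally stated, (b) only covers $\Delta^k u_\epsilon$; controlling the gradient factor requires an off-diagonal bound on \emph{spatial} derivatives of $\partial_t^k p_t$. In the compact case this is harmless, since \cite[Thm.\ 3]{JS-estimates} does control all horizontal and temporal derivatives off-diagonally, so your argument goes through once you say so. The paper, however, handles this term differently: it applies the divergence theorem a second time, now over the complement $\Omega^c$, converting $\int_{\partial\Omega}\phi\, g(\nabla u_\epsilon,\nabla\deltas)\,d\area$ into volume integrals over $\Omega^c$ involving only $u_\epsilon$ and $\Delta u_\epsilon$ (see \eqref{eq:aux_lim2}). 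This sidesteps spatial-derivative estimates entirely and needs only time-derivative Gaussian bounds. The payoff of the paper's trick is that the argument extends verbatim to the non-compact setting of Section~\ref{sec:loc_noncomp}, where only the estimates \eqref{eq:offdiag_der} on $\partial_t^k p_t$ are assumed; your direct estimate of $\nabla\Delta^k u_\epsilon$ on $\partial\Omega$ would not obviously go through there without additional input.
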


\begin{rmk}
The only difference with respect to \cite[Lem.\ A.4]{MR4223354} is item $(iii)$, which now holds only as $t\to 0$ and not for all positive times. 
\end{rmk}

\begin{proof}[Proof of Lemma \ref{lem:hp_idp_aux}]
We claim that, for any $j\geq 1$, 
\begin{equation}
\label{eq:aux_lim}
\lim_{t\to0}\int_{\Omega}\phi(x)\deltasr^ju_\epsilon(t,x)d\omega(x)=\int_{\omNice{\epsilon}}\deltasr^j\phi(x)d\omega(x).
\end{equation}
Let us prove it by induction: for $j=1$, applying the divergence theorem, we have
\begin{equation}
\label{eq:diverg_j1}
\int_\Omega\phi\deltasr u_\epsilon \de\omega=-\int_{\partial\Omega}\phi g\left(\nabla u_\epsilon,\nabla\deltas\right) d\area+\int_{\partial\Omega}u_\epsilon g\left(\nabla\phi,\nabla\deltas\right) d\area+\int_\Omega u_\epsilon\deltasr\phi \de\omega.
\end{equation}
Let us discuss the first term in \eqref{eq:diverg_j1}: by divergence theorem applied with respect to the set $\Omega^c$, we have  
\begin{equation}
\label{eq:aux_lim2}
\int_{\partial\Omega}\phi g\left(\nabla u_\epsilon,\nabla\deltas\right) d\area=\int_{\Omega^c}\phi\deltasr u_\epsilon \de\omega+\int_{\partial\Omega}u_\epsilon g\left(\nabla \phi,\nabla\deltas\right) d\area-\int_{\Omega^c}u_\epsilon\deltasr\phi \de\omega,
\end{equation}
then, using \cite[Thm.\ 3]{JS-estimates} and noticing that $d_{\mathrm{SR}}(x,y)\geq\epsilon$, for any $x\in\omNice{\epsilon}$ and $y\in\Omega^c$, we conclude that in the limit as $t\to0$, \eqref{eq:aux_lim2} converges to 0. This proves \eqref{eq:aux_lim}, for $j=1$. For $j>1$, proceeding by induction, we conclude. Finally, using the co-area formula \eqref{eq:coarea}, we complete the proof of the statement as in the usual argument of \cite[Lem.\ 5.6]{Savo-heat-cont-asymp}.
\end{proof}

\begin{rmk}
In the non-compact case, under the assumption of Theorem \ref{thm:doub_poincare}, the above lemma holds. In particular, on the one hand, the divergence theorem holds since $\phi$ has compact support. On the other hand, notice that the time derivative estimates \eqref{eq:offdiag_der} are enough to ensure that \eqref{eq:aux_lim2} converges to $0$ as $t\to 0$, regardless of the compactness of the set of integration. The same is true for $j>1$, where higher-order time derivatives appear.  
\end{rmk}

The next step is to apply the iterated Duhamel's principle \eqref{eq:iter_duhamel} to $I_\epsilon$, which now satisfies its assumptions, then, pass to the limit as $\epsilon\to 0$. The computations are long but straightforward: we report here the result at order $t^{5/2}$.

\begin{lem}
\label{lem:aux_exp2}
Under the same assumptions of Lemma \ref{lem:hp_idp_aux}, let $\phi\in C^\infty_c(\Omega_{-\smallpar}^{\smallpar})$. Then, as $t\to 0$, we have:
\begin{align}
I_\Omega\phi(t,0) &=2\G_{1-u}[\phi](t)+\frac{1}{\sqrt\pi}\int_0^t \G_{1-u}[N_\Omega\phi](\tau)(t-\tau)^{-1/2}d\tau
\label{eq:formal_expI}\\
						 &\quad+\frac{1}{2\pi}\int_0^t \int_0^\tau \G_{1-u}[N_\Omega^2\phi](\hat\tau)\left((\tau-\hat\tau)(t-\tau)\right)^{-1/2}d\hat\tau \de\tau\\
						 &\quad+\frac{1}{4\pi^{3/2}}\int_0^t \int_0^\tau \int_0^{\hat\tau} \G_{1-u}[N_\Omega^3\phi](s)\left((\hat\tau-s)(\tau-\hat\tau)(t-\tau)\right)^{-1/2}ds\de\hat\tau \de\tau\\
						 &\quad+\frac{1}{4\sqrt\pi}\int_0^t\int_{\partial\Omega}\left(1-u(\tau,\cdot)\right)(4\deltasr-N_\Omega^2)\phi \de\area(t-\tau)^{1/2}d\tau\\
						 &\quad+\frac{1}{4\sqrt\pi}\int_0^t \G_{1-u}[(6N_\Omega\Delta-N_\Omega^3-2\Delta N_\Omega)\phi](\tau)(t-\tau)^{1/2}d\tau+O(t^{5/2}),
\end{align}
where $u(t,\cdot)=e^{t\Delta}\mathds{1}_{\Omega}$ and $\G_u[\phi]$ is the functional defined in \eqref{eq:def_G}. We recall that $N_\Omega$ is the operator defined in \eqref{eq:sr_N_rem}.
\end{lem}
%
%\begin{rmk}
%An analogue to Lemma \ref{lem:aux_exp2} holds for $I\phi(t,0)$: under the same assumptions, we have the formal asympotics up to order $5$ in $\sqrt{t}$:
%\begin{align}
%I\phi(t,0) &=2\G_{1-u}[\phi](t)+\frac{1}{\sqrt\pi}\int_0^t \G_{1-u}[N\phi](\tau)(t-\tau)^{-1/2}d\tau
%\label{eq:formal_expI}\\
						 %&\quad+\frac{1}{2\pi}\int_0^t \int_0^\tau \G_{1-u}[N^2\phi](\hat\tau)\left((\tau-\hat\tau)(t-\tau)\right)^{-1/2}d\hat\tau \de\tau\\
						 %&\quad+\frac{1}{4\pi^{3/2}}\int_0^t \int_0^\tau \int_0^{\hat\tau} \G_{1-u}[N^3\phi](s)\left((\hat\tau-s)(\tau-\hat\tau)(t-\tau)\right)^{-1/2}ds\de\hat\tau \de\tau\\
						 %&\quad+\frac{1}{4\sqrt\pi}\int_0^t\int_{\partial\Omega}\left(1-u(\tau,\cdot)\right)(4\deltasr-N^2)\phi \de\area(t-\tau)^{1/2}d\tau\\
						 %&\quad+\frac{1}{4\sqrt\pi}\int_0^t \G_{1-u}[(6N\Delta-N^3-2\Delta N)\phi](\tau)(t-\tau)^{1/2}d\tau+O(t^{5/2}),
%\end{align}
%where $\G_u[\phi]$ is the functional defined in \eqref{eq:def_G}.
%\end{rmk}

The expression \eqref{eq:expr_I} is a direct consequence of \ref{lem:aux_exp2}. Moreover, we can apply it, when the base set is chosen to be $\Omega^c$. Then, evaluating the difference between $I_\Omega\phi(t,0)$ and $I_{\Omega^c}\phi(t,0)$ we obtain the asymptotic equality \eqref{eq:diff_exp_aux}, which is proved in the next lemma. We use the shorthands $I$, $I^c$ for $I_\Omega$ and $I_{\Omega^c}$ respectively. 

\begin{lem}
\label{lem:formal_exp_diff}
Under the same assumptions of Lemma \ref{lem:hp_idp_aux}, let $\phi\in C^\infty_c(\Omega_{-\smallpar}^{\smallpar})$. Then, as $t\to 0$, we have:
\begin{flalign}
\left(I\phi-\op\phi\right)(t,0) &= 2\G_{1-2u}[\phi](t)+\frac{1}{2}\int_{\partial\Omega}N\phi \de\area \, t\\
									&\quad+\frac{1}{2\pi}\int_0^t \int_0^\tau \G_{1-2u}[N^2\phi](\hat\tau)\left((\tau-\hat\tau)(t-\tau)\right)^{-1/2}d\hat\tau \de\tau\\
									&\quad+\frac{1}{4\sqrt\pi}\int_0^t\int_{\partial\Omega}\left(1-2u(\tau,\cdot)\right)(4\deltasr-N^2)\phi \de\area(t-\tau)^{1/2}d\tau+O(t^{2}),
\end{flalign}
where $N$ is the operator given by
\begin{equation}
N\phi=2g(\nabla\phi,\nabla\deltas)+\phi\Delta\deltas,\qquad\forall\,\phi\in \extfunsp,
\end{equation}
with $\delta\colon M\rightarrow \R$ the signed distance function from $\partial\Omega$. 
\end{lem}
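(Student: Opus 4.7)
The plan is to apply Lemma \ref{lem:aux_exp2} twice, once with base set $\Omega$ and once with base set $\Omega^c$, and then to subtract. Two elementary symmetries drive the entire computation. First, since $M$ is compact it is stochastically complete, hence by \eqref{eq:easy_rel_u} we have $1 - u^c(t,\cdot) = u(t,\cdot)$ on $M \setminus \partial\Omega$, so every weight $(1-u)$ in a $\G_{1-u}$ functional or in a boundary integrand becomes $u$ when the base set is $\Omega^c$. Second, the outward unit normals to $\Omega$ and to $\Omega^c$ along the common boundary are opposite; writing $N\phi = 2g(\nabla\phi,\nabla\deltas)+\phi\,\Delta\deltas$ with $\deltas$ the signed distance positive inside $\Omega$, the operator $N_0$ of Lemma \ref{lem:aux_exp2} equals $-N$ for $\Omega$ and $+N$ for $\Omega^c$.

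Consequently each occurrence of $N_0^k$ in the expansion acquires a sign $(-1)^k$ when one switches from $\Omega^c$ to $\Omega$. Termwise subtraction therefore produces, for even $k$, the difference $\G_{(1-u)-u}[N^k\phi] = \G_{1-2u}[N^k\phi]$, and, for odd $k$, the sum $\G_{(1-u)+u}[N^k\phi] = \G_1[N^k\phi]$. The even-$k$ contributions reproduce, line by line, the $2\G_{1-2u}[\phi](t)$ term, the triple Duhamel integral with $\G_{1-2u}[N^2\phi]$, and the $(1-2u)(4\Delta - N^2)\phi$ boundary term appearing in the statement.

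It remains only to evaluate the odd-$k$ contributions, which all involve the explicit quantity $\G_1[\psi](t) = \tfrac{1}{\sqrt\pi}\bigl(\int_{\partial\Omega}\psi\,d\sigma\bigr) t^{1/2}$. Direct computation via a Beta-function identity gives
\[
\frac{1}{\sqrt\pi}\int_0^t \G_1[N\phi](\tau)(t-\tau)^{-1/2}d\tau = \frac{1}{\pi}\Bigl(\int_{\partial\Omega}N\phi\,d\sigma\Bigr)\int_0^t \tau^{1/2}(t-\tau)^{-1/2}d\tau = \frac{t}{2}\int_{\partial\Omega}N\phi\,d\sigma,
\]
supplying the $\frac{1}{2}\int_{\partial\Omega}N\phi\,d\sigma\cdot t$ term in the claim. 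The remaining odd-$k$ pieces, namely the triple Duhamel integral involving $\G_1[N^3\phi]$ and the $\G_1$-components of the order-$t^{1/2}$ line in Lemma \ref{lem:aux_exp2}, are each proportional to $t^2$ by analogous Beta-function evaluations and are therefore absorbed into the remainder $O(t^2)$. The only real obstacle is careful sign-bookkeeping across all the terms of Lemma \ref{lem:aux_exp2}, in particular verifying that the compound operator $6N_0\Delta - N_0^3 - 2\Delta N_0$ is odd in $N_0$ so that it lands in the $\G_1$ column; no new analytic input is needed beyond the iterated Duhamel principle already established for $I_\Omega$.
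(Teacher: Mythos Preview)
Your proposal is correct and follows essentially the same approach as the paper's own proof: apply Lemma~\ref{lem:aux_exp2} to both $\Omega$ and $\Omega^c$, use stochastic completeness to rewrite $1-u^c$ as $u$, exploit the sign flip $N_0=\mp N$ to see that even powers of $N$ produce $\G_{1-2u}$ while odd powers produce $\G_1$, and then evaluate the $\G_1$ contributions explicitly via Beta integrals to extract the $\tfrac{t}{2}\int_{\partial\Omega}N\phi\,d\sigma$ term and absorb the rest into $O(t^2)$. Your parity bookkeeping, including the observation that $6N_0\Delta - N_0^3 - 2\Delta N_0$ is odd in $N_0$, matches the paper's computation line by line.
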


\begin{proof}
Firstly, we apply Lemma \ref{lem:aux_exp2} to $I\phi$: we obtain exactly the expression \eqref{eq:formal_expI}, with the operator $N_\Omega=N$. Secondly, for the outside contribution, recall that we have the following equality of smooth functions:
\begin{equation}
1-u^c(t,x)=1-e^{t\Delta}\mathds{1}_{\Omega^c}(x)=e^{t\Delta}\mathds{1}_\Omega(x)=u(t,x),\qquad\forall\,t>0,\ x\in M.
\end{equation}
Therefore, when we apply Lemma \ref{lem:aux_exp2} to $\op\phi$, we replace $1-u^c(t,\cdot)=1-e^{t\Delta}\mathds{1}_{\Omega^c}$ with the function $u(t,\cdot)=e^{t\Delta}\mathds{1}_\Omega(\cdot)$. Moreover, the operator $N_{\Omega^c}$ defined in \eqref{eq:sr_N_rem}, for the set $\Omega^c$, is equal to $-N$, since the inward-pointing normal to $\Omega^c$ is $-\nabla\deltas$. Therefore, writing the difference of the two contributions, and noticing that $\Omega$ and its complement share the boundary, we have:
\begin{flalign}
\left(I\phi-\op\phi\right)(t,0) &= 2\G_{1-2u}[\phi](t)+\frac{1}{\sqrt\pi}\int_0^t \G_{1}[N\phi](\tau)(t-\tau)^{-1/2}d\tau\\
									&\quad+\frac{1}{2\pi}\int_0^t\! \int_0^\tau \G_{1-2u}[N^2\phi](\hat\tau)\left((\tau-\hat\tau)(t-\tau)\right)^{-1/2}d\hat\tau \de\tau\\
									&\quad+\frac{1}{4\pi^{3/2}}\int_0^t\! \int_0^\tau\! \int_0^{\hat\tau} \G_{1}[N^3\phi](s)\left((\hat\tau-s)(\tau-\hat\tau)(t-\tau)\right)^{-1/2} ds\de\hat\tau \de\tau\qquad\label{eq:rem_term_diff1}\\
									&\quad+\frac{1}{4\sqrt\pi}\int_0^t\!\int_{\partial\Omega}\left(1-2u(\tau,\cdot)\right)(4\deltasr-N^2)\phi \de\area(t-\tau)^{1/2}d\tau\\
									&\quad+\frac{1}{4\sqrt\pi}\int_0^t \G_{1}[(6N\Delta-N^3-2\Delta N)\phi](\tau)(t-\tau)^{1/2}d\tau+O(t^{5/2}).\label{eq:rem_term_diff2}
\end{flalign}
To conclude, it is enough to notice that the functional $\G_{1}$ can be explicitly computed:
\begin{equation}
\G_{1}[\phi](t)=\frac{1}{\sqrt\pi}\int_{\partial\Omega}\phi \de\area \, t^{1/2},\qquad\forall\,\phi\in\extfunsp.
\end{equation}
Thus, the terms in \eqref{eq:rem_term_diff1} and \eqref{eq:rem_term_diff2} are remainder of order $O(t^2)$.
\end{proof}

Applying Lemma \ref{lem:aux_exp2} to the sum of $I_\Omega\phi(t,0)$ and $I_{\Omega^c}\phi(t,0)$ instead, we obtain \eqref{eq:expr_sum4}. The proof of this result is not provided here, as it is similar to the proof of Lemma \ref{lem:formal_exp_diff}.

\begin{lem}
\label{lem:formal_exp_sum}
Under the same assumptions of Lemma \ref{lem:hp_idp_aux}, let $\phi\in C^\infty_c(\Omega_{-\smallpar}^{\smallpar})$. Then, as $t\to 0$, we have:
\begin{flalign}
\left(I\phi+\op\phi\right)(t,0) &=\frac{2}{\sqrt\pi}\int_{\partial\Omega}\phi \de\area \, t^{1/2}+\frac{1}{\sqrt\pi}\int_0^t \G_{1-2u}[N\phi](\tau)(t-\tau)^{-1/2}d\tau \label{eq:expr_sum4_old}\\
						 &\quad +\frac{1}{6\sqrt\pi}\int_{\partial\Omega}(4\deltasr+N^2)\phi \de\area \, t^{3/2}\notag\\
						 &\quad + \frac{1}{4\pi^{3/2}}\int_0^t \int_0^\tau \int_0^{\hat\tau} \G_{1-2u}[N^3\phi](s)\left((\hat\tau-s)(\tau-\hat\tau)(t-\tau)\right)^{-1/2}ds\de\hat\tau \de\tau\notag\\
						 &\quad +\frac{1}{4\sqrt\pi}\int_0^t \G_{1-2u}[(6N\Delta-N^3-2\Delta N)\phi](\tau)(t-\tau)^{1/2}d\tau+O(t^{5/2}).\notag
\end{flalign}
\end{lem}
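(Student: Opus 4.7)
The plan is to apply Lemma \ref{lem:aux_exp2} separately to $I\phi(t,0)$ and to $\op\phi(t,0)$, and then add the two resulting expansions termwise. Three ingredients make the sum tractable. First, since $M$ is compact (hence stochastically complete), $1-u^c(t,x)=u(t,x)$, so every weight $1-u$ becomes $u$ in the expansion applied to $\Omega^c$. Second, the outward unit normals of $\Omega$ and $\Omega^c$ at the shared boundary $\partial\Omega$ differ by a sign: the auxiliary operator $N_0$ of Lemma \ref{lem:aux_exp2} equals $-N$ for $\Omega$ and $+N$ for $\Omega^c$, where $N$ is the operator \eqref{eq:sr_N_true}. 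Third, by linearity of $\G_v[\phi]$ in the subscript,
\begin{equation}
\G_{1-u}[\psi]+\G_u[\psi]=\G_1[\psi],\qquad \G_{1-u}[\psi]-\G_u[\psi]=\G_{1-2u}[\psi],
\end{equation}
and the first is explicit: $\G_1[\psi](t)=\pi^{-1/2}\bigl(\int_{\partial\Omega}\psi\,d\sigma\bigr)\,t^{1/2}$.

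Termwise, the parity of the total $N_0$-degree appearing in each of the five lines of Lemma \ref{lem:aux_exp2} dictates the result. Terms with odd total $N_0$-degree (namely $N_0\phi$, $N_0^3\phi$, and $(6N_0\Delta-N_0^3-2\Delta N_0)\phi$) produce $\G_{1-2u}$-convolutions of the corresponding expressions in $N$ once the sign flip from $N_0=-N$ is absorbed; these contribute the second, fourth, and fifth displayed terms of \eqref{eq:expr_sum4_old}. Terms with even total $N_0$-degree (the leading $2\G_{1-u}[\phi]$, the double $\G_{1-u}[N_0^2\phi]$-convolution, and the surface term $\frac{1}{4\sqrt\pi}\int_0^t\int_{\partial\Omega}(1-u)(4\Delta-N_0^2)\phi\,d\sigma(t-\tau)^{1/2}d\tau$) instead produce $\G_1$-convolutions (the $u$'s cancel). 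The leading even contribution gives $2\G_1[\phi](t)=\frac{2}{\sqrt\pi}\int_{\partial\Omega}\phi\,d\sigma\,t^{1/2}$ directly. The two remaining even contributions must be merged via the elementary Beta integrals
\begin{equation}
\int_0^t\!\int_0^\tau\hat\tau^{1/2}\bigl((\tau-\hat\tau)(t-\tau)\bigr)^{-1/2}d\hat\tau\,d\tau=\tfrac{2\pi}{3}t^{3/2},\qquad \int_0^t(t-\tau)^{1/2}d\tau=\tfrac{2}{3}t^{3/2},
\end{equation}
which collapse the two $N^2$-contributions of opposite sign (from the $\G_1[N^2\phi]$-term and from the $(4\Delta-N^2)\phi$-term) into the single integral $\frac{1}{6\sqrt\pi}\int_{\partial\Omega}(4\Delta+N^2)\phi\,d\sigma\,t^{3/2}$, recovering the third displayed term.

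The argument is entirely parallel to the proof of Lemma \ref{lem:formal_exp_diff}, with the parity roles of $\G_1$ and $\G_{1-2u}$ exchanged: in the difference, odd $N_0$-powers collapse into the explicit $\G_1$ and even powers survive as $\G_{1-2u}$, whereas in the sum the reverse occurs. The only technical point is the systematic tracking of signs arising from $N_0=\pm N$ and the careful pairing of the two $N^2$-contributions to reveal that they combine into $4\Delta+N^2$ (rather than $4\Delta-N^2$, as in the difference); no new conceptual ingredient is needed beyond Lemma \ref{lem:aux_exp2} and stochastic completeness.
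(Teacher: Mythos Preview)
Your proposal is correct and follows exactly the approach the paper indicates: the paper omits the proof, stating only that it is ``similar to the one of Lemma \ref{lem:formal_exp_diff}'', i.e., apply Lemma \ref{lem:aux_exp2} to both $I\phi$ and $\op\phi$, use $1-u^c=u$ and $N_0=\mp N$ on $\Omega$ and $\Omega^c$ respectively, and add. Your parity bookkeeping and the Beta-integral computation collapsing the two even $N^2$-contributions into $\frac{1}{6\sqrt\pi}\int_{\partial\Omega}(4\Delta+N^2)\phi\,d\sigma\,t^{3/2}$ are precisely the details the paper suppresses.
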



\begin{thebibliography}{CCSGM13}

\bibitem[ABB20]{ABB-srgeom}
A.~Agrachev, D.~Barilari, and U.~Boscain.
\newblock {\em A comprehensive introduction to sub-{R}iemannian geometry},
  volume 181 of {\em Cambridge Studies in Advanced Mathematics}.
\newblock Cambridge University Press, Cambridge, 2020.

\bibitem[AMM13]{MR3019137}
L.~Angiuli, U.~Massari, and M.~Miranda, Jr.
\newblock Geometric properties of the heat content.
\newblock {\em Manuscripta Math.}, 140(3-4):497--529, 2013.

\bibitem[Bal03]{MR2021034}
Z.~M. Balogh.
\newblock Size of characteristic sets and functions with prescribed gradient.
\newblock {\em J. Reine Angew. Math.}, 564:63--83, 2003.

\bibitem[BBS16]{MR3470142}
D.~Barilari, U.~Boscain, and M.~Sigalotti, editors.
\newblock {\em Geometry, analysis and dynamics on sub-{R}iemannian manifolds.
  {V}ol. 1}.
\newblock EMS Series of Lectures in Mathematics. European Mathematical Society
  (EMS), Z\"{u}rich, 2016.
\newblock Lecture notes from the IHP Trimester held at the Institut Henri
  Poincar\'{e}, Paris and from the CIRM Summer School ``Sub-Riemannian
  Manifolds: From Geodesics to Hypoelliptic Diffusion'' held in Luminy, Fall
  2014.

\bibitem[Bel96]{MR1421822}
A.~Bella\"{\i}che.
\newblock The tangent space in sub-{R}iemannian geometry.
\newblock In {\em Sub-{R}iemannian geometry}, volume 144 of {\em Progr. Math.},
  pages 1--78. Birkh\"{a}user, Basel, 1996.

\bibitem[BMP12]{MR2972544}
M.~Bramanti, M.~Miranda, Jr., and D.~Pallara.
\newblock Two characterization of {BV} functions on {C}arnot groups via the
  heat semigroup.
\newblock {\em Int. Math. Res. Not. IMRN}, 2012(17):3845--3876, 2012.

\bibitem[BQ99]{MR1681640}
D.~Bakry and Z.~M. Qian.
\newblock Harnack inequalities on a manifold with positive or negative {R}icci
  curvature.
\newblock {\em Rev. Mat. Iberoamericana}, 15(1):143--179, 1999.

\bibitem[CCSGM13]{MR3022730}
L.~Capogna, G.~Citti, and C.~Senni Guidotti~Magnani.
\newblock Sub-{R}iemannian heat kernels and mean curvature flow of graphs.
\newblock {\em J. Funct. Anal.}, 264(8):1899--1928, 2013.

\bibitem[CdVHT18]{MR3743700}
Y.~Colin~de Verdi\`ere, L.~Hillairet, and E.~Tr\'{e}lat.
\newblock Spectral asymptotics for sub-{R}iemannian {L}aplacians, {I}:
  {Q}uantum ergodicity and quantum limits in the 3-dimensional contact case.
\newblock {\em Duke Math. J.}, 167(1):109--174, 2018.

\bibitem[CdVHT21]{YHT-2}
Y.~Colin~de Verdi\`ere, L.~Hillairet, and E.~Tr\'{e}lat.
\newblock Small-time asymptotics of hypoelliptic heat kernels near the
  diagonal, nilpotentization and related results.
\newblock {\em Ann. H. Lebesgue}, 4:897--971, 2021.

\bibitem[DG54]{MR62214}
E.~De~Giorgi.
\newblock Su una teoria generale della misura {$(r-1)$}-dimensionale in uno
  spazio ad {$r$} dimensioni.
\newblock {\em Ann. Mat. Pura Appl. (4)}, 36:191--213, 1954.

\bibitem[DGN12]{DGN-Integrability}
D.~Danielli, N.~Garofalo, and D.~M. Nhieu.
\newblock Integrability of the sub-{R}iemannian mean curvature of surfaces in
  the {H}eisenberg group.
\newblock {\em Proc. Amer. Math. Soc.}, 140(3):811--821, 2012.

\bibitem[FPR20]{FPR-sing-lapl}
V.~Franceschi, D.~Prandi, and L.~Rizzi.
\newblock On the essential self-adjointness of singular sub-{L}aplacians.
\newblock {\em Potential Anal.}, 53(1):89--112, 2020.

\bibitem[Gri09]{MR2569498}
A.~Grigor'yan.
\newblock {\em Heat kernel and analysis on manifolds}, volume~47 of {\em AMS/IP
  Studies in Advanced Mathematics}.
\newblock American Mathematical Society, Providence, RI; International Press,
  Boston, MA, 2009.

\bibitem[Gro96]{MR1421823}
M.~Gromov.
\newblock Carnot-{C}arath\'{e}odory spaces seen from within.
\newblock In {\em Sub-{R}ie\-man\-nian geometry}, volume 144 of {\em Progr.
  Math.}, pages 79--323. Birkh\"{a}user, Basel, 1996.

\bibitem[GS12]{MR3008437}
I.~Gallagher and Y.~Sire.
\newblock Besov algebras on {L}ie groups of polynomial growth.
\newblock {\em Studia Math.}, 212(2):119--139, 2012.

\bibitem[GT20]{garofalo2020}
N.~Garofalo and G.~Tralli.
\newblock A bourgain-brezis-mironescu-d\'avila theorem in carnot groups of step
  two.
\newblock {\em Communications in Analysis and Geometry (in press)}, 2020.

\bibitem[Jea14]{MR3308372}
F.~Jean.
\newblock {\em Control of nonholonomic systems: from sub-{R}iemannian geometry
  to motion planning}.
\newblock SpringerBriefs in Mathematics. Springer, Cham, 2014.

\bibitem[Jer86]{MR850547}
D.~Jerison.
\newblock The {P}oincar\'{e} inequality for vector fields satisfying
  {H}\"{o}rmander's condition.
\newblock {\em Duke Math. J.}, 53(2):503--523, 1986.

\bibitem[JSC86]{JS-estimates}
D.~S. Jerison and A.~S\'{a}nchez-Calle.
\newblock Estimates for the heat kernel for a sum of squares of vector fields.
\newblock {\em Indiana Univ. Math. J.}, 35(4):835--854, 1986.

\bibitem[Led94]{MR1309086}
M.~Ledoux.
\newblock Semigroup proofs of the isoperimetric inequality in {E}uclidean and
  {G}auss space.
\newblock {\em Bull. Sci. Math.}, 118(6):485--510, 1994.

\bibitem[LY86]{MR834612}
P.~Li and S.-T. Yau.
\newblock On the parabolic kernel of the {S}chr\"{o}dinger operator.
\newblock {\em Acta Math.}, 156(3-4):153--201, 1986.

\bibitem[Mit85]{MR806700}
J.~Mitchell.
\newblock On {C}arnot-{C}arath\'{e}odory metrics.
\newblock {\em J. Differential Geom.}, 21(1):35--45, 1985.

\bibitem[Mon02]{montgomerybook}
R.~Montgomery.
\newblock {\em A tour of subriemannian geometries, their geodesics and
  applications}, volume~91 of {\em Mathematical Surveys and Monographs}.
\newblock American Mathematical Society, Providence, RI, 2002.

\bibitem[MPPP07]{MR2325595}
M.~Miranda, Jr., D.~Pallara, F.~Paronetto, and M.~Preunkert.
\newblock Short-time heat flow and functions of bounded variation in {$\mathbf
  R^N$}.
\newblock {\em Ann. Fac. Sci. Toulouse Math. (6)}, 16(1):125--145, 2007.

\bibitem[NSW85]{MR793239}
A.~Nagel, E.~M. Stein, and S.~Wainger.
\newblock Balls and metrics defined by vector fields. {I}. {B}asic properties.
\newblock {\em Acta Math.}, 155(1-2):103--147, 1985.

\bibitem[Rif14]{Riffordbook}
L.~Rifford.
\newblock {\em Sub-{R}iemannian geometry and optimal transport}.
\newblock SpringerBriefs in Mathematics. Springer, Cham, 2014.

\bibitem[Ros21]{PhDthesis}
T.~Rossi.
\newblock {\em {H}eat content asymptotics in sub-{R}iemannian geometry}.
\newblock Phd Thesis. {U}niversit{\'e} {G}renoble {A}lpes {\&} {I}nternational
  {S}chool for {A}dvanced {S}tudies, 2021.

\bibitem[Ros23]{integrabilityH}
T.~Rossi.
\newblock Integrability of the sub-{R}iemannian mean curvature at degenerate
  characteristic points in the {H}eisenberg group.
\newblock {\em Adv. Calc. Var.}, 16(1):99--110, 2023.

\bibitem[RR21]{MR4223354}
L.~Rizzi and T.~Rossi.
\newblock Heat content asymptotics for sub-{R}iemannian manifolds.
\newblock {\em J. Math. Pures Appl. (9)}, 148:267--307, 2021.

\bibitem[Sav98]{Savo-heat-cont-asymp}
A.~Savo.
\newblock Uniform estimates and the whole asymptotic series of the heat content
  on manifolds.
\newblock {\em Geom. Dedicata}, 73(2):181--214, 1998.

\bibitem[SC92]{MR1150597}
L.~Saloff-Coste.
\newblock A note on {P}oincar\'{e}, {S}obolev, and {H}arnack inequalities.
\newblock {\em Internat. Math. Res. Notices}, 1992(2):27--38, 1992.

\bibitem[Str86]{MR862049}
R.~S. Strichartz.
\newblock Sub-{R}iemannian geometry.
\newblock {\em J. Differential Geom.}, 24(2):221--263, 1986.

\bibitem[Stu94]{MR1301456}
K.-T. Sturm.
\newblock Analysis on local {D}irichlet spaces. {I}. {R}ecurrence,
  conservativeness and {$L^p$}-{L}iouville properties.
\newblock {\em J. Reine Angew. Math.}, 456:173--196, 1994.

\bibitem[Stu96]{MR1387522}
K.~T. Sturm.
\newblock Analysis on local {D}irichlet spaces. {III}. {T}he parabolic
  {H}arnack inequality.
\newblock {\em J. Math. Pures Appl. (9)}, 75(3):273--297, 1996.

\bibitem[Var96]{MR1435484}
N.~T. Varopoulos.
\newblock Analysis on {L}ie groups.
\newblock {\em Rev. Mat. Iberoamericana}, 12(3):791--917, 1996.

\bibitem[vdB13]{MR3116054}
M.~van~den Berg.
\newblock Heat flow and perimeter in {$\mathbb{R}^m$}.
\newblock {\em Potential Anal.}, 39(4):369--387, 2013.

\bibitem[vdBG15]{MR3358065}
M.~van~den Berg and P.~Gilkey.
\newblock Heat flow out of a compact manifold.
\newblock {\em J. Geom. Anal.}, 25(3):1576--1601, 2015.

\end{thebibliography}
\end{document}